\DeclareMathOperator{\val}{val}
\DeclareMathOperator{\rep}{rep}
\theoremstyle{plain}
\newtheorem{theorem}{Theorem}
\newtheorem{lemma}[theorem]{Lemma}
\newtheorem{corollary}[theorem]{Corollary}
\newtheorem{prop}[theorem]{Proposition}
\theoremstyle{definition}
\newtheorem{definition}[theorem]{Definition}
\newtheorem{example}[theorem]{Example}
\newtheorem{remark}[theorem]{Remark}
\def\undertilde#1{\mathord{\vtop{\ialign{##\crcr
$\hfil\displaystyle{#1}\hfil$\crcr\noalign{\kern1.5pt\nointerlineskip}
$\hfil\tilde{}\hfil$\crcr\noalign{\kern1.5pt}}}}}
\newcommand{\seqnum}[1]{\href{http://oeis.org/#1}{\underline{#1}}}
\begin{document}
\begin{center}
\vskip 1cm{\LARGE\bf Counting Subwords Occurrences in Base-$b$ Expansions}  \vskip 1cm
\large
Julien Leroy, Michel Rigo and Manon Stipulanti\\
University of Li\`ege \\ 
Department of Mathematics \\ 
All\'ee de la D\'ecouverte 12 (B37) \\
4000 Li\`ege, Belgium \\
\href{mailto:J.Leroy@ulg.ac.be}{\tt J.Leroy@ulg.ac.be} \\
\href{mailto:M.Rigo@ulg.ac.be}{\tt M.Rigo@ulg.ac.be} \\
\href{mailto:M.Stipulanti@ulg.ac.be}{\tt M.Stipulanti@ulg.ac.be} \\
\end{center}

\vskip .2 in

\begin{abstract}
We count the number of distinct (scattered) subwords occurring in the base-$b$ expansion of the non-negative integers. 
More precisely, we consider the sequence $(S_b(n))_{n\ge 0}$ counting the number of positive entries on each row of a generalization of the Pascal triangle to binomial coefficients of base-$b$ expansions. 
By using a convenient tree structure, we provide recurrence relations for $(S_b(n))_{n\ge 0}$ leading to the $b$-regularity of the latter sequence. 
Then we deduce the asymptotics of the summatory function of the sequence $(S_b(n))_{n\ge 0}$. 
\end{abstract}

\section{Introduction}\label{sec:intro}

A {\em finite word} is a finite sequence of letters belonging to a finite set called the {\em alphabet}. 
The \emph{binomial coefficient} $\binom{u}{v}$ of two finite words $u$ and $v$ is the number of times $v$ occurs as a subsequence of $u$ (meaning as a ``scattered'' subword). 
All along the paper, we let $b$ denote an integer greater than $1$. 
We let $\rep_b(n)$ denote the (greedy) base-$b$ expansion of $n\in\mathbb{N}\setminus\{0\}$ starting with a non-zero digit. 
We set $\rep_b(0)$ to be the empty word denoted by $\varepsilon$. We let 
$$L_b=\{1,\ldots,b-1\}\{0,\ldots,b-1\}^*\cup\{\varepsilon\}$$ 
be the set of base-$b$ expansions of the non-negative integers. 
For all $w\in \{0,\ldots,b-1\}^*$, we also define $\val_b(w)$ to be the value of $w$ in base $b$, i.e., if $w=w_n\cdots w_0$ with $w_i\in\{0,\ldots,b-1\}$ for all $i$, then $\val_b(w)=\sum_{i=0}^n w_i b^i$.

Several generalizations and variations of the Pascal triangle exist and lead to interesting combinatorial, geometrical or dynamical properties \cite{BNS, BS, vonH,JRV,LRS1}.
Ordering the words of $L_b$ by increasing genealogical order, we introduced Pascal-like triangles $\mathrm{P}_b$ \cite{LRS1} where the entry $\mathrm{P}_b(m,n)$ is $\binom{\rep_b(m)}{\rep_b(n)}$. 
Clearly $\mathrm{P}_b$ contains $(b-1)$ copies of the usual Pascal triangle when only considering words of the form $a^m$ with $a\in\{1,\ldots,b-1\}$ and $m\ge 0$.  
In Figure~\ref{fig:S3-debut}, we depict the first few elements of $\mathrm{P}_3$ \seqnum{A284441} and its compressed version highlighting the number of positive elements on each line. 
The data provided by this compressed version is summed up in Definition~\ref{def:S_b}. 

\begin{figure}[h!tb]
    \centering
    \includegraphics[scale=0.3]{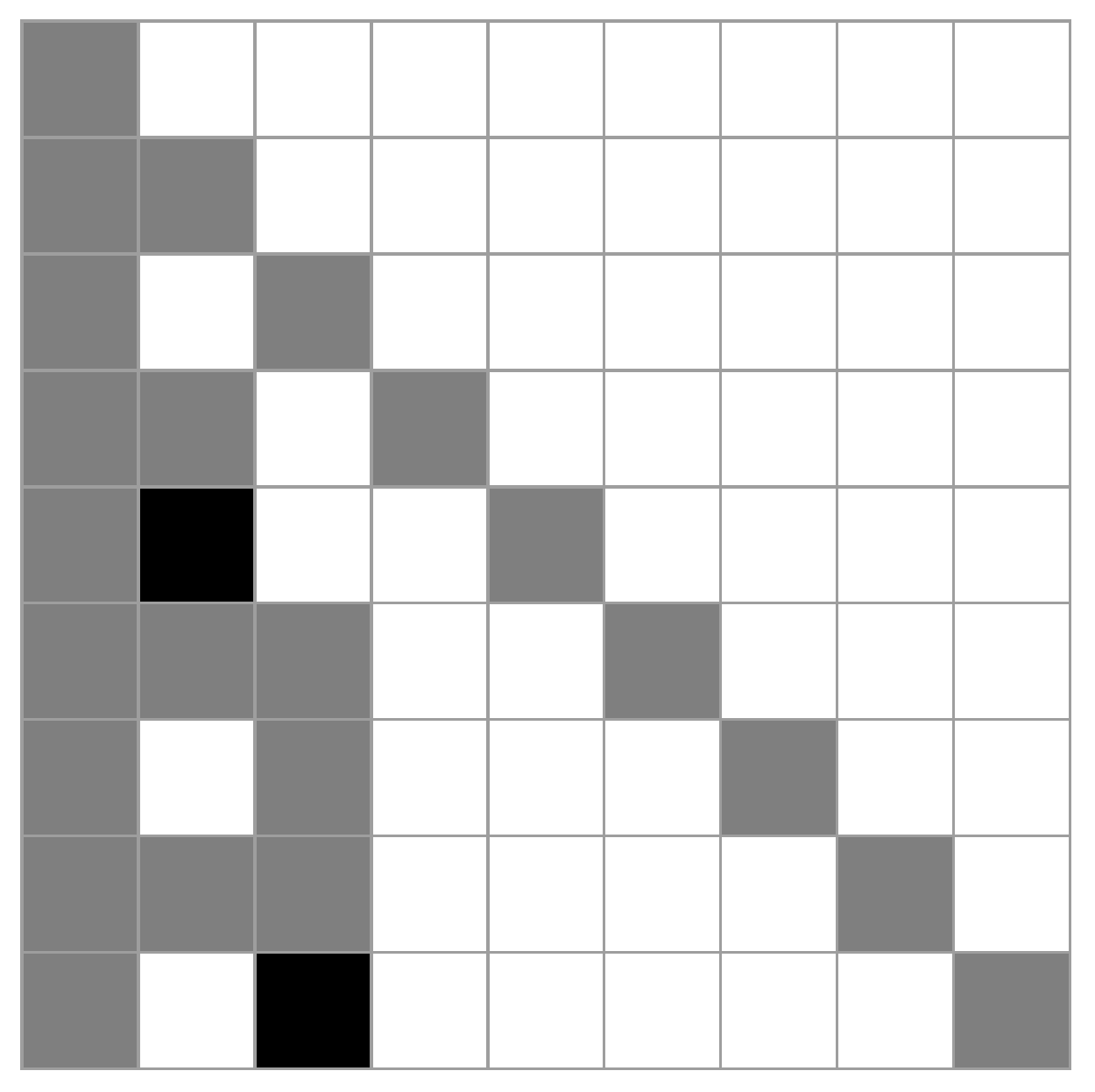} \quad
    \includegraphics[scale=0.3, angle=270, origin=c]{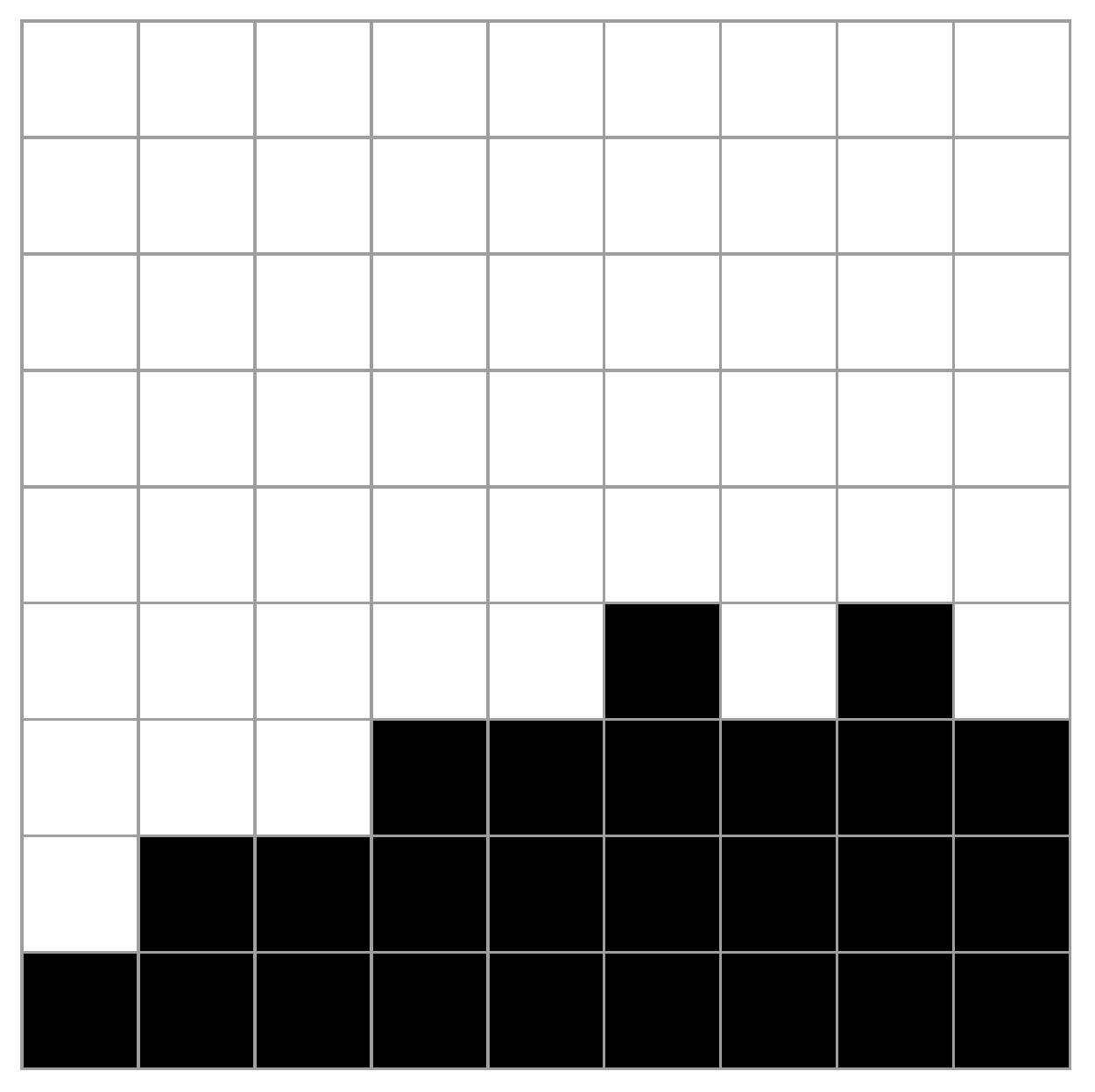}
    \caption{On the left, the first few rows of the generalized Pascal triangle $\mathrm{P}_3$ (a white (resp., gray; resp., black) square corresponds to $0$ (resp., $1$; resp., $2$)) and on the right, its compressed version.}
    \label{fig:S3-debut}
\end{figure}

\begin{definition}\label{def:S_b}
For $n\ge 0$, we define the sequence $(S_b(n))_{n\ge 0}$ by setting
\begin{equation}
    \label{eq:defSb}
    S_b(n):=\#\left\{v\in L_b\mid \binom{\rep_b(n)}{v}>0\right\}.
\end{equation}
We also consider the summatory function $(A_b(n))_{n\ge 0}$ of the sequence $(S_b(n))_{n\ge 0}$ defined by $A_b(0)=0$ and for all $n\ge 1$,
$$A_b(n):=\sum_{j=0}^{n-1} S_b(j).$$
The quantity $A_b(n)$ can be thought of as the total number of base-$b$ expansions occurring as subwords in the base-$b$ expansion of integers less than $n$ (the same subword is counted $k$ times if it occurs in the base-$b$ expansion of $k$ distinct integers).
\end{definition}

In some sense, the sequences $(S_b(n))_{n\ge 0}$ and $(A_b(n))_{n\ge 0}$ measure the sparseness of $\mathrm{P}_b$. 

\begin{example}
If $b=3$, then the first few terms of the sequence $(S_3(n))_{n\ge 0}$ \seqnum{A282715} are
$$1, 2, 2, 3, 3, 4, 3, 4, 3, 4, 5, 6, 5, 4, 6, 7, 7, 6, 4, 6, 5, 7, 
6, 7, 5, 6, 4, 5, 7, 8, 8, 7, 10,\ldots
$$
For instance, the subwords of the word $121$ are $\varepsilon, 1,2,11, 12, 21, 121$. Thus, $S_3(\val_3(121))=S_3(16)=7$.
The first few terms of $(A_3(n))_{n\ge 0}$ \seqnum{A284442} are
$$0,1, 3, 5, 8, 11, 15, 18, 22, 25, 29, 34, 40, 45, 49, 55,\ldots$$ 
\end{example}

We studied \cite{LRS2} the triangle $\mathrm{P}_2$ \seqnum{A282714} and the sequence $(S_2(n))_{n\ge 0}$ \seqnum{A007306}, which turns out to be the subsequence with odd indices of the Stern--Brocot sequence. 
The sequence $(S_2(n))_{n\ge 0}$ is $2$-regular in the sense of Allouche and Shallit \cite{AS99}. 
We studied \cite{LRS3} the behavior of $(A_2(n))_{n \ge 0}$ \seqnum{A282720}.
To this aim, we exploited a particular decomposition of $A_2(2^\ell+r)$, for all $\ell\ge 1$ and all $0\le r< 2^\ell$, using powers of $3$. 

\subsection{Our contribution}

We conjectured six recurrence relations for $(S_3(n))_{n\ge 0}$ depending on the position of $n$ between two consecutive powers of $3$; see \cite{LRS2}. 
Using the heuristic from \cite{AS} suggesting recurrence relations, the sequence $(S_3(n))_{n\ge 0}$ was expected to be $3$-regular.   
It was not obvious that we could derive general recurrence relations for $(S_b(n))_{n\ge 0}$ from the form of those satisfied by $(S_2(n))_{n\ge 0}$.
We thought that $(b-1)b$ recurrence relations should be needed in the general case, leading to a cumbersome statement. 
Moreover it was computationally challenging to obtain many terms of $(S_b(n))_{n\ge 0}$ for large $b$ because the number of words of length $n$ in $L_b$ grows like $b^n$.
Therefore we lack data to conjecture the $b$-regularity of $(S_b(n))_{n\ge 0}$. 

When studying $(A_2(n))_{n\ge 0}$, a possible extension seemed to emerge \cite{LRS3}. 
In particular, we prove that $A_2(2n)=3A_2(n)$ and, sustained by computer experiments, we conjectured that $A_b(nb)=(2b-1)A_b(n)$.

Surprisingly, for all $b\ge 2$, we show in Section~\ref{sec:trie} that the recurrence relations satisfied by $(S_b(n))_{n\ge 0}$ reduce to three forms; see Proposition~\ref{prop:rel-s}.
In particular, this proves the conjecture stated in \cite{LRS2}. Therefore, in Section~\ref{sec:breg}, we deduce the $b$-regularity of $(S_b(n))_{n\ge 0}$; see Theorem~\ref{thm:b-reg}.  Moreover we obtain a linear representation of the sequence with $b\times b$ matrices. 
We also show that $(S_b(n))_{n\ge 0}$ is palindromic over $[(b-1)b^\ell,b^{\ell+1}]$. 

The key to study the asymptotics of $(A_b(n))_{n\ge 0}$ is to obtain specific recurrence relations for this sequence. In Proposition~\ref{prop:rel-A}, we show that theses relations involve powers of $(2b-1)$. Therefore, we prove the conjecture about $A_b(nb)$. In Section~\ref{sec:summatory}, using the so-called $(2b-1)$-decompositions, we may apply the method introduced in \cite{LRS3}. 

We think that this paper motivates the quest for generalized Stern--Brocot sequences and analogues of the Farey tree \cite{Bates, CW, CS, Garrity, Glasby, MGOT}. Namely can one reasonably define a tree structure, or some other combinatorial structure, in which the sequence $(S_b(n))_{n\ge 0}$ naturally appears?

Most of the results are proved by induction and the base case usually takes into account the values of $S_b(n)$ for $0 \leq n < b^2$.
These values are easily obtained from Definition~\ref{def:S_b} and summarized in Table~\ref{tab:initialSb}.

\begin{table}
\centering
$
\begin{array}{|c|c|c|c|c|c|c|c|c|c|c|c|c|c|c|c|}
\hline
\rep_b(n) 	& \varepsilon & x	& x0 & xx & xy & x00 & x0x	& x0y & xx0 & xxx & xxy & xy0 & xyx & xyy & xyz	\\
\hline
S_b(n)		& 1	& 2	& 3	& 3	& 4	& 4 & 5 & 6 & 5 & 4 & 6 & 7 & 7 & 6 & 8	\\
\hline
\end{array}
$
\caption{The first few values of $S_b(n)$ for $0 \leq n < b^3$, with pairwise distinct $x,y,z \in \{1,\dots,b-1\}$.}
\label{tab:initialSb}
\end{table}

\section{General recurrence relations in base $b$}\label{sec:trie}

The aim of this section is to prove the following result exhibiting recurrence relations satisfied by the sequence $(S_b(n))_{n\ge 0}$. This result is useful to prove that the summatory function of the latter sequence also satisfies recurrence relations; see Section~\ref{sec:summatory}.

\begin{prop}\label{prop:rel-s}
The sequence $(S_b(n))_{n\ge 0}$ satisfies $S_b(0)=1$, $S_b(1)=\cdots =S_b(b-1)=2$, and, for all $x,y\in\{1,\ldots,b-1\}$ with $x\ne y$, all $\ell\ge 1$ and all $r\in\{0,\ldots,b^{\ell-1}-1\}$,
\begin{align}
S_b(xb^\ell+r) &=  S_b(xb^{\ell-1}+r) + S_b(r); \label{eq:rec_Sb_1}\\
S_b(xb^\ell+xb^{\ell-1}+r) &=  2 S_b(xb^{\ell-1}+r) - S_b(r); \label{eq:rec_Sb_2}\\
S_b(xb^\ell+yb^{\ell-1}+r) &= S_b(xb^{\ell-1}+r) + 2 S_b(yb^{\ell-1}+r) - 2 S_b(r). \label{eq:rec_Sb_3}
\end{align}
\end{prop}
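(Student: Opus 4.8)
The plan is to read $S_b(n)$ directly off the full string $\rep_b(n)$ rather than off the integer $n$, and to reduce all three identities to two elementary recurrences for counts of distinct subsequences. For a finite word $u$ over $\{0,\ldots,b-1\}$, write $f(u)$ for the number of \emph{all} distinct subsequences of $u$ (the empty word included) and $N(u)$ for the number of those lying in $L_b$, that is, the empty word together with the distinct subsequences whose first letter is nonzero, so that $S_b(n)=N(\rep_b(n))$. Since a valid subsequence cannot begin at a leading zero, stripping leading zeros does not change $N$; in particular, if $w$ is the length-$(\ell-1)$ base-$b$ word (with possible leading zeros) of value $r$, then $N(w)=S_b(r)$. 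First I would record the string factorizations underlying the statement: for $x\neq y$ and $0\le r<b^{\ell-1}$,
\[
\rep_b(xb^\ell+r)=x\,0\,w,\quad \rep_b(xb^\ell+xb^{\ell-1}+r)=x\,x\,w,\quad \rep_b(xb^\ell+yb^{\ell-1}+r)=x\,y\,w,
\]
while $\rep_b(xb^{\ell-1}+r)=x\,w$ and $\rep_b(yb^{\ell-1}+r)=y\,w$. Thus every identity compares $N$ at words that differ only in a two-letter prefix.

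The two building blocks I would establish are a decomposition of $N$ by the first letter and a prepend recurrence for $f$. For a letter $a$ occurring in $u$, let $\sigma_a(u)$ be the suffix of $u$ strictly after the first occurrence of $a$; matching the first letter of a subsequence canonically to its leftmost occurrence and then choosing any subsequence of the remaining suffix gives the bijective counts
\[
N(u)=1+\sum_{a=1}^{b-1}[a\in u]\,f(\sigma_a(u)),\qquad f(au)=2f(u)-[a\in u]\,f(\sigma_a(u)),
\]
where $[a\in u]$ is $1$ if $a$ occurs in $u$ and $0$ otherwise, and the second formula is the classical distinct-subsequence recurrence under prepending a letter. Equivalently, $N(u)$ counts the nodes of the trie of valid subsequences of $u$, and the first-letter decomposition is its branching at the root. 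I would prove both identities by the canonical leftmost-embedding argument, which guarantees that each distinct subsequence is counted exactly once. Combining them also yields $N(u)=f(u)-[0\in u]\,f(\sigma_0(u))$ and, for a nonzero letter $a$, the special case $f(aw)=2f(w)-[a\in w]\,f(\sigma_a(w))$.

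With these in hand each of \eqref{eq:rec_Sb_1}--\eqref{eq:rec_Sb_3} becomes a short substitution. Expanding $N$ by the first-letter decomposition on each of $x0w$, $xxw$, $xyw$, $xw$, $yw$, $w$, the leading letters of the prefix have their first occurrence at the front, so their suffixes are controlled ($\sigma_x(xyw)=yw$, $\sigma_y(xyw)=w$, and so on), while for every letter $a\notin\{0,x,y\}$ the first occurrence lies inside $w$ and contributes the \emph{same} term $[a\in w]\,f(\sigma_a(w))$ to all of the expansions. These common terms cancel, leaving combinations of $f(w)$, $f(0w)$, $f(xw)$, $f(yw)$ together with $[x\in w]\,f(\sigma_x(w))$ and $[y\in w]\,f(\sigma_y(w))$. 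Eliminating the length-increasing terms via the prepend recurrence ($f(0w)=f(w)+N(w)$, $f(xw)=2f(w)-[x\in w]\,f(\sigma_x(w))$, and likewise for $y$) collapses each identity to the asserted linear combination of $N(xw)$, $N(yw)$, $N(w)$; translating back through $S_b(n)=N(\rep_b(n))$ and $N(w)=S_b(r)$ finishes the three cases. The initial conditions $S_b(0)=1$ and $S_b(1)=\cdots=S_b(b-1)=2$ are immediate from Definition~\ref{def:S_b}.

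I expect the only real obstacle to be rigor in the counting rather than the algebra: the first-letter decomposition of $N$ and the prepend recurrence for $f$ are valid only because distinct subsequences are identified with their canonical leftmost embeddings, and one must check that no subsequence is double counted when a letter is prepended (the subtracted term $[a\in u]\,f(\sigma_a(u))$ removes exactly the subsequences that already begin with $a$). The other point deserving care is the reduction $N(w)=S_b(r)$ for a word $w$ carrying leading zeros, and the verification that the contributions of the letters $a\notin\{0,x,y\}$ are genuinely identical across all six words; once these are pinned down, the cancellations are forced and the three recurrences follow.
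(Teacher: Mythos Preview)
Your argument is correct, and it is a genuinely more streamlined route than the paper's. The paper does not prove the three two-letter-prefix identities directly: it first establishes five separate lemmas for words with three-letter prefixes $x00u$, $xx0u$, $x0yu$, $xxyu$, $xyzu$ by explicit case analysis on the trie $\mathcal{T}(\cdot)$ (distinguishing, for instance, whether $u=0^n$, whether $x=z$, whether $z=0$, etc.), and then derives \eqref{eq:rec_Sb_1}--\eqref{eq:rec_Sb_3} by an induction on $\ell$ that combines those lemmas. Your two building blocks---the first-letter decomposition $N(u)=1+\sum_{a\neq 0}[a\in u]\,f(\sigma_a(u))$ and the classical prepend recurrence $f(au)=2f(u)-[a\in u]\,f(\sigma_a(u))$---encapsulate exactly the same trie information (they are the root branching and the size recursion for the full subsequence trie), but packaging them this way collapses the five lemmas into one line each and removes the induction on $\ell$ entirely. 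What the paper's approach buys is an explicit picture of the trie that is reused elsewhere (e.g., for the palindromic structure in Proposition~\ref{pro:bpalindrome}); what your approach buys is a uniform, case-free derivation with no dependence on the first letter of $w$. The points you flag as needing care (canonical leftmost embeddings to avoid double counting, and $N(w)=S_b(r)$ for $w$ with leading zeros) are precisely the right ones, and both are routine once stated.
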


For the sake of completeness, we recall the definition of a particularly useful tool called \textit{the trie of subwords} to prove Proposition~\ref{prop:rel-s}. This tool is also useful to prove the $b$-regularity of the sequence $(S_b(n))_{n\ge 0}$; see Section~\ref{sec:breg}.

\begin{definition}\label{def:trie}
Let $w$ be a finite word over $\{0,\ldots, b-1\}$. 
The language of its subwords is factorial, i.e., if $xyz$ is a subword of $w$, then $y$ is also a subword of $w$. 
Thus we may associate with $w$, the {\em trie\footnote{This tree is also called prefix tree or radix tree. 
All successors of a node have a common prefix and the root is the empty word.} of its subwords}. 
The root is $\varepsilon$ and if $u$ and $ua$ are two subwords of $w$ with $a\in\{0,\ldots, b-1\}$, then $ua$ is a child of $u$. 
We let $\mathcal{T}(w)$ denote the subtree in which we only consider the children $1,\ldots,b-1$ of the root $\varepsilon$ and their successors, if they exist. 
\end{definition}

\begin{remark} 
\label{rem:numberofnodes}
The number of nodes on level $\ell\ge 0$ in $\mathcal{T}(w)$ counts the number of subwords of length $\ell$ in $L_b$ occurring in $w$.
In particular, the number of nodes of the trie $\mathcal{T}(\rep_b(n))$ is exactly $S_b(n)$ for all $n\ge 0$.
\end{remark}

\begin{definition}\label{def:subtree}
For each non-empty word $w \in L_b$, we consider a factorization of $w$ into maximal blocks of consecutively distinct letters (i.e., $a_i \neq a_{i+1}$ for all $i$) of the form
\[
	w = a_1^{n_1} \cdots a_M^{n_M},
\]
with $n_\ell \geq 1$ for all $\ell$.
For each $\ell \in \{0,\dots,M-1\}$, we consider the subtree $T_\ell$ of $\mathcal{T}(w)$ whose root is the node $a_1^{n_1} \cdots a_\ell^{n_\ell} a_{\ell+1}$. For convenience, we set $T_M$ to be an empty tree with no node. Roughly speaking, we have a root of a new subtree $T_\ell$ for each new variation of digits in $w$. For each $\ell \in \{0,\dots,M-1\}$, we also let $\# T_\ell$ denote the number of nodes of the tree $T_\ell$.

Note that for $k-i \ge 2$, one could possibly have $a_k=a_i$.
For each $\ell \in \{0,\dots,M-1\}$, we let $\mathrm{Alph}(\ell)$ denote the set of letters occurring in $a_{\ell+1}\cdots a_M$.
Then for each letter $a \in \mathrm{Alph}(\ell)$, we let $j(a,\ell)$ denote the smallest index in $\{\ell+1,\ldots,M\}$ such that $a_{j(a,\ell)} = a$.
\end{definition}

\begin{example}\label{ex:truc}
In this example, we set $b=3$ and $w = 22000112 \in L_3$.
Using the previous notation, we have $M=4$, $a_1=2$, $a_2=0$, $a_3=1$ and $a_4=2$. For instance, $\mathrm{Alph}(0)=\{0,1,2\}$, $\mathrm{Alph}(2)=\{1,2\}$ and $j(0,0)=2$, $j(1,0)=3$, $j(2,0)=1$ and $j(2,1)=4$.
\end{example}

The following result describes the structure of the tree $\mathcal{T}(w)$. It directly follows from the definition.

\begin{prop}[{\cite[Proposition~27]{LRS2}}]
Let $w$ be a finite word in $L_b$. 
With the above notation about $M$ and the subtrees $T_\ell$, the tree $\mathcal{T}(w)$ has the following properties.
\begin{enumerate}
\item
The node of label $\varepsilon$ has $\#(\mathrm{Alph}(0)\setminus\{0\})$ children that are $a$ for $a \in \mathrm{Alph}(0)\setminus\{0\}$. 
Each child $a$ is the root of a tree isomorphic $T_{j(a,0)-1}$.
\item
For each $\ell \in \{0,\dots,M-1\}$ and each $i \in \{0,\dots,n_{\ell+1}-1\}$ with $(\ell,i) \neq (0,0)$, the node of label $x = a_1^{n_1} \cdots a_{\ell}^{n_{\ell}}a_{\ell+1}^i$ has $\#(\mathrm{Alph}(\ell))$ children that are $xa$ for $a \in \mathrm{Alph}(\ell)$. 
Each child $xa$ with $a \neq a_{\ell+1}$ is the root of a tree isomorphic to $T_{j(a,\ell)-1}$.
\end{enumerate}
\end{prop}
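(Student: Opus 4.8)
The plan is to exploit the \emph{greedy (leftmost) embedding} of a subword. For a subword $u$ of $w$, read $u$ from left to right and match each of its letters to the earliest still-available position of $w$; write $g(u)$ for the position of $w$ at which this embedding ends, with $g(\varepsilon)=0$, and let $w_{>k}$ denote the suffix of $w$ obtained by deleting its first $k$ letters. The central fact I would establish first is the \emph{prefix-consistency} of this procedure: inside any longer subword $uv$, the greedy embedding matches the $u$-part to exactly the same positions as the greedy embedding of $u$ alone, because the procedure never looks ahead. This gives, for every word $v$,
\[
uv \text{ is a subword of } w \iff v \text{ is a subword of } w_{>g(u)}.
\]
Consequently the subtree of $\mathcal{T}(w)$ rooted at $u$, viewed as a rooted tree whose edges are labelled by the appended letters, depends only on $w_{>g(u)}$; in particular two nodes $u,u'$ with $g(u)=g(u')$ root isomorphic subtrees. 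This equivalence is the engine of the whole argument, and I expect the short verification of prefix-consistency to be the only genuinely delicate point; once it is in place everything else reduces to computing the positions $g(\cdot)$.

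With this in hand I would compute $g$ on the relevant nodes. Each spine node $x=a_1^{n_1}\cdots a_\ell^{n_\ell}a_{\ell+1}^{i}$ with $0\le i\le n_{\ell+1}-1$ is literally a prefix of $w$, so its greedy embedding is the prefix embedding and $g(x)=n_1+\cdots+n_\ell+i$. Since $n_{\ell+1}-i\ge 1$, the suffix $w_{>g(x)}=a_{\ell+1}^{n_{\ell+1}-i}a_{\ell+2}^{n_{\ell+2}}\cdots a_M^{n_M}$ uses precisely the letters of $a_{\ell+1}\cdots a_M$, that is, $\mathrm{Alph}(\ell)$. By the equivalence above, the children of $x$ are exactly the $xa$ with $a\in\mathrm{Alph}(\ell)$, giving the announced count $\#\mathrm{Alph}(\ell)$. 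For the root I would argue identically with $g(\varepsilon)=0$ and $w_{>0}=w$, whose letter set is $\mathrm{Alph}(0)$; the only difference is that Definition~\ref{def:trie} discards the child $0$ of $\varepsilon$, leaving the children $a\in\mathrm{Alph}(0)\setminus\{0\}$.

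It remains to identify the subtrees hanging off the spine. Fix a child $xa$ with $a\in\mathrm{Alph}(\ell)$ and $a\neq a_{\ell+1}$, and set $j:=j(a,\ell)$. Because $a$ does not occur in the remaining block $a_{\ell+1}^{n_{\ell+1}-i}$, the earliest occurrence of $a$ after position $g(x)$ sits at the start of block $j$, so $g(xa)=n_1+\cdots+n_{j-1}+1$. On the other hand, the root $a_1^{n_1}\cdots a_{j-1}^{n_{j-1}}a_{j}$ of $T_{j-1}$ is again a prefix of $w$ (recall $a_{j}=a$ by definition of $j$), with greedy end $n_1+\cdots+n_{j-1}+1$ as well. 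Thus $g(xa)$ equals the greedy-end position of the root of $T_{j-1}$, and the equivalence of the first paragraph yields an edge-label-preserving isomorphism between the subtree rooted at $xa$ and $T_{j-1}=T_{j(a,\ell)-1}$. Taking $\ell=0$, $i=0$ settles the isomorphism claim for the children of the root in part~(1). Finally I would note that the one child excluded from the isomorphism claim, namely $xa_{\ell+1}=a_1^{n_1}\cdots a_{\ell+1}^{i+1}$, is simply the next spine node, so the description loses no subtree and the proof is complete.
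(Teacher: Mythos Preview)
Your argument is correct. The paper does not actually prove this proposition: it is quoted from \cite{LRS2} with the remark that it ``directly follows from the definition,'' and no further justification is given. Your greedy-embedding formalization is precisely the mechanism that makes that remark rigorous, and it is the standard way to reason about subword tries. The key equivalence $uv\preccurlyeq w \iff v\preccurlyeq w_{>g(u)}$ is exactly what underlies the paper's later tree manipulations in Lemmas~\ref{lem:arbrex00}--\ref{lem:arbrexyz}, even though the paper never isolates it explicitly.

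One cosmetic point: in part~(1) the isomorphism claim covers \emph{every} child $a\in\mathrm{Alph}(0)\setminus\{0\}$, including $a=a_1$, whereas your subtree-identification paragraph is stated only for $a\neq a_{\ell+1}$. This is harmless, since for $a=a_1$ one has $j(a_1,0)=1$ and the subtree rooted at $a_1$ is $T_0$ by definition, but it would be worth a half-sentence so that the reader sees part~(1) is fully covered rather than inferring it from your closing remark about spine nodes.
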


\begin{example}
Let us continue Example~\ref{ex:truc}.
The tree $\mathcal{T}(22000112)$ is depicted in Figure~\ref{fig:trie-base-3}. We use three different colors to represent the letters $0,1,2$.
\begin{figure}[h!tb]
\centering
\includegraphics[trim= 550 500 550 120]{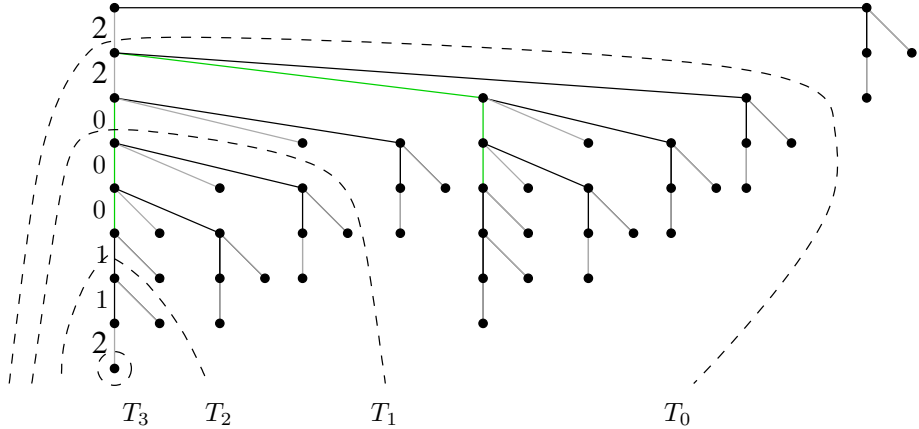}
    \caption{The trie $\mathcal{T}(22000112)$.}
    \label{fig:trie-base-3}
\end{figure}
The tree $T_0$ (resp., $T_1$; resp., $T_2$; resp., $T_3$) is the subtree of $\mathcal{T}(w)$ with root $2$ (resp., $2^20$; resp., $2^20^31$; resp., $2^20^31^22$). 
These subtrees are represented in Figure~\ref{fig:trie-base-3} using dashed lines. 
The tree $T_3$ is limited to a single node since the number of nodes of $T_{M-1}$ is $n_M$, which is equal to $1$ in this example.
\end{example}

Using tries of subwords, we prove the following five lemmas. Their proofs are essentially the same, so we only prove two of them.

\begin{lemma}\label{lem:arbrex00}
For each letter $x\in\{1,\ldots,b-1\}$ and each word $u\in\{0,\ldots,b-1\}^*$, we have
$$\#\left\{ v\in L_b \mid \binom{x00u}{v} > 0 \right\} = 2 \cdot \#\left\{ v\in L_b \mid \binom{x0u}{v} > 0 \right\} - \#\left\{ v\in L_b \mid \binom{xu}{v} > 0 \right\}.$$
\end{lemma}
\begin{proof}
Recall that from Remark~\ref{rem:numberofnodes}, we need to prove that 
$\# \mathcal{T}(x00u) = 2\# \mathcal{T}(x0u) - \# \mathcal{T}(xu)$. 

Assume first that $u$ is of the form $u=0^n$, $n\ge 0$. 
The tree $\mathcal{T}(xu)$ is linear and has $n+2$ nodes, $\mathcal{T}(x0u)$ has $n+3$ nodes and $\mathcal{T}(x00u)$ has $n+4$ nodes. 
The formula holds.

Now suppose that $u$ contains other letters than $0$. 
We let $a_1, \ldots, a_m$ denote all the pairwise distinct letters of $u$ different from $0$. 
They are implicitly ordered with respect to their first appearance in $u$.  
If $x\in\{a_1,\ldots,a_m\}$, we let $i_x\in\{1,\ldots,m\}$ denote the index such that $a_{i_x}=x$.
For all $i\in\{1,\ldots,m\}$, we let $u_i a_i$ denote the prefix of $u$ that ends with the first occurrence of the letter $a_i$ in $u$, and we let $R_i$ denote the subtree of $\mathcal{T}(xu)$ with root $xu_i a_i$.

First, observe that the subtree $T$ of $\mathcal{T}(xu)$ with root $x$ is equal to the subtree of $\mathcal{T}(x0u)$ with root $x0$ and also to the subtree of $\mathcal{T}(x00u)$ with root $x00$. 

\begin{figure}
\begin{minipage}[c]{0.3\linewidth}
\begin{tikzpicture}
[decoration=snake,
line around/.style={decoration={pre length=#1,post length=#1}}]

  \tikzstyle{every state}=[shape=circle,radius=0.01cm,fill=black,scale=0.1]

  \node[state] 		   (E) at (0,1)        {};
  \node[state] 		   (A) at (0,0)        {};
  \node[state]        (B) at (0,-1)		   {};
  \node[state]        (C) at (1,-0.5)		   {};
  \node[state]        (D) at (1,0.5)		   {};

\tikzstyle{every node}=[shape=circle,fill=none,draw=none,minimum size=10pt,inner sep=2pt]
\node(a1) at (-0.3,0.5) {$x$};
\node(a2) at (-0.3,-0.5) {$0$};
\node(a3) at (-0.7,-1.3) {$T$};
\node(a4) at (0,-2) {};
\node(a5) at (1.5,-0.7) {$R_i$};
\node(a6) at (1.5,0.3) {$R_i$};
\node(a7) at (1,1) {$a_i, i\ne i_x$};
\node(a8) at (0.5,0) {$a_i$};

    \draw[] (E) -- (A);  
    \draw[] (A) -- (B);
    \draw[] (A) -- (C);
    \draw[] (E) -- (D);
    \draw[dashed] (B) -- (a4);

    \draw[dashed] (-0.7,-1) node{} to[bend left] (0.7,-1)  node{};
    \draw[dashed] (0.6,-0.5) node{} to[bend left] (1.6,-0.5)  node{};
   \draw[dashed] (0.6,0.5) node{} to[bend left] (1.6,0.5)  node{};

;
\end{tikzpicture}
\end{minipage}
\begin{minipage}[c]{0.3\linewidth}
\begin{tikzpicture}
[decoration=snake,
line around/.style={decoration={pre length=#1,post length=#1}}]

  \tikzstyle{every state}=[shape=circle,radius=0.01cm,fill=black,scale=0.1]

  \node[state] 		   (E) at (0,1)        {};
  \node[state] 		   (A) at (0,0)        {};
  \node[state]        (B) at (0,-1)		   {};
  \node[state] 		   (C) at (0,-2)        {};
  \node[state] 		   (D) at (0,-3)        {};
  \node[state] 		   (F) at (0,-4)        {};
  \node[state] 		   (G) at (0,-5)        {};
  \node[state] 		   (H) at (0,-6)        {};
  \node[state]        (I) at (1,0.5)		   {};
  
\tikzstyle{every node}=[shape=circle,fill=none,draw=none,minimum size=10pt,inner sep=2pt]
\node(a1) at (-0.3,0.5) {$x$};
\node(a2) at (-0.3,-0.5) {$0$};
\node(a3) at (-0.3,-2.5) {$0$};
\node(a4) at (-0.3,-3.5) {$a_1$};
\node(a5) at (0,-7) {};
\node(a6) at (-0.3,-5.5) {$a_2$};
\node(a7) at (-1,-0.3) {$T$};
\node(a8) at (-1,-4.3) {$R_1$};
\node(a9) at (-1,-6.3) {$R_2$};
\node(a10) at (1.5,0.3) {$R_i$};
\node(a11) at (1,1) {$a_i, i\ne i_x$};

    \draw[] (E) -- (A);  
    \draw[] (A) -- (B);
    \draw[] (C) -- (D);
    \draw[] (D) -- (F);
    \draw[] (G) -- (H);
    \draw[] (E) -- (I);
    \draw[dashed] (H) -- (a5);
    \draw[decorate] (B) -- (C);
    \draw[decorate] (F) -- (G);
    
    \draw[dashed] (-1,0) node{} to[bend left] (1,0)  node{};
    \draw[dashed] (-1,-4) node{} to[bend left] (1,-4)  node{};
    \draw[dashed] (-1,-6) node{} to[bend left] (1,-6)  node{};
    \draw[dashed] (0.6,0.5) node{} to[bend left] (1.6,0.5)  node{};
;
\end{tikzpicture}
\end{minipage}
\begin{minipage}[c]{0.3\linewidth}
\begin{tikzpicture}
[decoration=snake,
line around/.style={decoration={pre length=#1,post length=#1}}]

  \tikzstyle{every state}=[shape=circle,radius=0.01cm,fill=black,scale=0.1]

  \node[state] 		   (E) at (0,1)        {};
  \node[state] 		   (A) at (0,0)        {};
  \node[state]        (B) at (0,-1)		   {};
  \node[state]        (C) at (0,-2)		   {};
  \node[state]        (D) at (1,-0.5)		   {};
  \node[state]        (F) at (1,-1.5)		   {};
  \node[state]        (G) at (1,0.5)		   {};
  
\tikzstyle{every node}=[shape=circle,fill=none,draw=none,minimum size=10pt,inner sep=2pt]
\node(a1) at (-0.3,0.5) {$x$};
\node(a2) at (-0.3,-0.5) {$0$};
\node(a3) at (-0.3,-1.5) {$0$};
\node(a4) at (-0.7,-2.3) {$T$};
\node(a5) at (0,-3) {};
\node(a6) at (1.5,-1.7) {$R_i$};
\node(a7) at (1.5,-0.7) {$R_i$};
\node(a8) at (1.5,0.3) {$R_i$};
\node(a9) at (1,1) {$a_i, i\ne i_x$};
\node(a10) at (0.5,-1) {$a_i$};
\node(a11) at (0.5,0) {$a_i$};

    \draw[](E) -- (A);  
    \draw[] (A) -- (B);
    \draw[] (B) -- (C);
    \draw[](A) -- (D);
    \draw[](B) -- (F);  
    \draw[] (E) -- (G);
    \draw[dashed] (C) -- (a5);

   \draw[dashed] (-0.7,-2) node{} to[bend left] (0.7,-2)  node{};
   \draw[dashed] (0.6,-0.5) node{} to[bend left] (1.6,-0.5)  node{};
   \draw[dashed] (0.6,-1.5) node{} to[bend left] (1.6,-1.5)  node{};
   \draw[dashed] (0.6,0.5) node{} to[bend left] (1.6,0.5)  node{};

;
\end{tikzpicture}
\end{minipage}
\caption{Schematic structure of the trees $\mathcal{T}(x0u)$, $\mathcal{T}(xu)$ and $\mathcal{T}(x00u)$.}
\label{fig:tree_x00}
\end{figure}

Secondly, for all $i\in\{1,\ldots,m\}$, the subtree of $\mathcal{T}(x0u)$ with root $xa_i$ is $R_i$. 
Similarly, $\mathcal{T}(x00u)$ contains two copies of $R_i$: the subtrees of root $xa_i$ and $x0a_i$. 

Finally, for all $i\in\{1,\ldots,m\}$ with $i\ne i_x$, the subtree of $\mathcal{T}(x0u)$ with root $a_i$ is $R_i$ and the subtree of $\mathcal{T}(x00u)$ with root $a_i$ is $R_i$. 

The situation is depicted in Figure~\ref{fig:tree_x00} where we put a unique edge for several indices when necessary, e.g., the edge labeled by $a_i$ stands for $m$ edges labeled by $a_1,\ldots,a_m$. The claimed formula holds since 
$$2\cdot (2+\#T + 2\sum_{\substack{1 \le i \le m \\ i \neq i_x}} \#R_i + \#R_{i_x}) - ( 1+\#T + \sum_{\substack{1 \le i \le m \\ i \neq i_x}} \#R_i )= 3+ \#T + 3 \sum_{\substack{1 \le i \le m \\ i \neq i_x}} \#R_i + 2\#R_{i_x}.$$
\end{proof}

\begin{lemma}\label{lem:arbrexx0}
For each letter $x\in\{1,\ldots,b-1\}$ and each word $u\in\{0,\ldots,b-1\}^*$, we have
$$\#\left\{ v\in L_b \mid \binom{xx0u}{v} > 0 \right\} = \#\left\{ v\in L_b \mid \binom{x0u}{v} > 0 \right\} + \#\left\{ v\in L_b \mid \binom{xu}{v} > 0 \right\}.$$
\end{lemma}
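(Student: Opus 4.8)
The plan is to imitate the proof of Lemma~\ref{lem:arbrex00}. By Remark~\ref{rem:numberofnodes}, each of the three cardinalities is the number of nodes of the corresponding trie, so it suffices to establish
\[
\#\mathcal{T}(xx0u) = \#\mathcal{T}(x0u) + \#\mathcal{T}(xu).
\]
First I would dispose of the degenerate case $u=0^n$. Here all three tries are linear apart from one extra branch: $\mathcal{T}(xu)=\mathcal{T}(x0^n)$ has $n+2$ nodes and $\mathcal{T}(x0u)=\mathcal{T}(x0^{n+1})$ has $n+3$ nodes, while $\mathcal{T}(xx0u)=\mathcal{T}(xx0^{n+1})$ consists of the root together with the branch through $x$ and the branch through $xx$, giving $1+(n+2)+(n+2)=2n+5$ nodes. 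Since $(n+3)+(n+2)=2n+5$, the identity holds in this case.

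For the general case I reuse the notation of Lemma~\ref{lem:arbrex00}: let $a_1,\dots,a_m$ be the pairwise distinct nonzero letters of $u$ ordered by first appearance, let $i_x$ be the index with $a_{i_x}=x$ when it exists, let $T$ be the subtree of $\mathcal{T}(xu)$ rooted at $x$ (equivalently the subtree of $\mathcal{T}(x0u)$ rooted at $x0$), and let $R_i$ be the subtree recording the suffix of $u$ after the first occurrence of $a_i$. From that proof I already have the counts
\[
\#\mathcal{T}(x0u) = 2 + \#T + 2\sum_{i \ne i_x}\#R_i + \#R_{i_x}, \qquad \#\mathcal{T}(xu) = 1 + \#T + \sum_{i \ne i_x}\#R_i,
\]
so the task reduces to computing $\#\mathcal{T}(xx0u)$.

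The heart of the argument is to describe $\mathcal{T}(xx0u)$. As before, the root $\varepsilon$ has children $x$ and $a_i$ for $i\ne i_x$, and each $a_i$-subtree is again $R_i$ because the suffix of $xx0u$ following the first $a_i$ is unchanged. Below the node $x$ the children are $x0$, $xa_i$ for $i\ne i_x$, and, crucially, $xx$; the subtrees rooted at $x0$ and $xa_i$ are $T$ and $R_i$ exactly as in the previous lemma, while the subtree rooted at $xx$ is isomorphic to the subtree rooted at $x$ in $\mathcal{T}(x0u)$ (both record the subwords of $0u$), hence contains $1+\#T+\sum_{i=1}^m \#R_i$ nodes. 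Summing over all nodes then yields
\[
\#\mathcal{T}(xx0u) = 3 + 2\#T + 3\sum_{i \ne i_x}\#R_i + \#R_{i_x},
\]
which is precisely $\#\mathcal{T}(x0u)+\#\mathcal{T}(xu)$, as required.

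The step I expect to be the main obstacle is the bookkeeping around the letter $x$ when it recurs inside $u$ (that is, when $i_x$ exists). One must check that the child $xx$ of $x$ and the would-be child $xa_{i_x}$ are the \emph{same} node, so that $R_{i_x}$ is not double-counted, and that the subtree rooted at $xx$ genuinely reproduces the \emph{entire} $x$-subtree of $\mathcal{T}(x0u)$, including its internal copy of $R_{i_x}$. Once these identifications are justified, the final tally is a routine summation that mirrors Lemma~\ref{lem:arbrex00}.
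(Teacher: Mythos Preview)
Your proof is correct and follows exactly the approach the paper intends: the paper merely says ``similar to the proof of Lemma~\ref{lem:arbrex00}'', and you have carried out that similarity faithfully, reusing the subtrees $T$ and $R_i$, handling the degenerate case $u=0^n$ separately, and identifying the new subtree rooted at $xx$ with the $x$-subtree of $\mathcal{T}(x0u)$. Your cautionary remark about $xa_{i_x}$ coinciding with $xx$ (so that $R_{i_x}$ appears only inside the $xx$-subtree, not as a sibling) is exactly the point that needs checking, and your node count $3 + 2\#T + 3\sum_{i \ne i_x}\#R_i + \#R_{i_x}$ is right.
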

\begin{proof}
The proof is similar to the proof of Lemma~\ref{lem:arbrex00}.
\end{proof}

\begin{lemma}\label{lem:arbrex0y}
For all letters $x,y\in\{1,\ldots,b-1\}$ and each word $u\in\{0,\ldots,b-1\}^*$, we have
$$\#\left\{ v\in L_b \mid \binom{x0yu}{v} > 0 \right\} =  \#\left\{ v\in L_b \mid \binom{xyu}{v} > 0 \right\} + \#\left\{ v\in L_b \mid \binom{yu}{v} > 0 \right\}.$$
\end{lemma}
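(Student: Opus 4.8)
The plan is to follow the template of the proof of Lemma~\ref{lem:arbrex00}. By Remark~\ref{rem:numberofnodes}, the statement is equivalent to the identity on node counts
\[
\#\mathcal{T}(x0yu) = \#\mathcal{T}(xyu) + \#\mathcal{T}(yu),
\]
so I would argue entirely with the tries of subwords. First I would fix notation by letting $a_1=y,a_2,\ldots,a_m$ denote the pairwise distinct nonzero letters of $yu$, ordered by first appearance, and for each $i$ I would let $R_i$ be the subtree of $\mathcal{T}(yu)$ rooted at the first occurrence of $a_i$; that is, $R_i$ is $a_i$ followed by the trie of subwords of the suffix of $yu$ lying after that occurrence. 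The children of the root of $\mathcal{T}(yu)$ are exactly $a_1,\ldots,a_m$, carrying $R_1,\ldots,R_m$, so $\#\mathcal{T}(yu)=1+\sum_{i=1}^m\#R_i$. Two auxiliary quantities drive the computation: the total number $B$ of distinct subwords of $yu$ (the empty word included), and the number $C$ of those subwords that begin with the letter $0$. Since a subword of $yu$ is either empty, or begins with some $a_i$ (and then lies in a unique $R_i$), or begins with $0$, one obtains the bookkeeping identity $B=1+\sum_{i=1}^m\#R_i+C$, equivalently $\sum_i\#R_i=B-1-C$.

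The heart of the argument is to locate the subtrees hanging below the leading letter $x$. In $\mathcal{T}(xyu)$, the subtree rooted at the child $x$ collects all subwords of the form $x\,t$ with $t$ an arbitrary subword of $yu$ (those starting with $0$ included), hence is a copy of the trie of \emph{all} subwords of $yu$ and has exactly $B$ nodes; in $\mathcal{T}(x0yu)$, the corresponding subtree collects the subwords $x\,t$ with $t$ a subword of $0yu$, so it is the trie of all subwords of $0yu$. The step that requires care — and which I expect to be the main obstacle — is counting this last tree. Prepending a $0$ to $yu$ creates, besides the $B$ subwords of $yu$ itself, the $B$ words $0\,t$ with $t$ a subword of $yu$, of which exactly the $C$ subwords of $yu$ that already begin with $0$ fail to be new; this yields the key identity that the trie of all subwords of $0yu$ has $2B-C$ nodes. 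All the remaining subtrees are unchanged: for every nonzero letter $a_i\ne x$, the leading $x$ and the inserted $0$ are skipped, so the first occurrence of $a_i$ and its trailing suffix coincide with those in $yu$, and the subtree rooted at the child $a_i$ of the root is again $R_i$ in each of the three tries.

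It then remains to assemble the counts, splitting according to whether $x$ is a new letter or coincides with some $a_k$. If $x\notin\{a_1,\ldots,a_m\}$, the root of $\mathcal{T}(xyu)$ has children $x,a_1,\ldots,a_m$, giving $\#\mathcal{T}(xyu)=1+B+\sum_i\#R_i=2B-C$, while $\#\mathcal{T}(x0yu)=1+(2B-C)+\sum_i\#R_i=3B-2C$; since $\#\mathcal{T}(yu)=B-C$, indeed $(2B-C)+(B-C)=3B-2C$. If instead $x=a_k$, the leading $x$ supersedes the interior first occurrence of $a_k$, so the subtree below the child $a_k$ becomes the full trie (of size $B$, respectively $2B-C$) rather than $R_k$, while the children $a_i$ with $i\ne k$ still carry $R_i$; the same arithmetic then gives $\#\mathcal{T}(xyu)=2B-C-\#R_k$ and $\#\mathcal{T}(x0yu)=3B-2C-\#R_k$, and the identity holds once more. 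As in Lemma~\ref{lem:arbrex00}, a direct check on the degenerate case $u\in 0^*$, where the three tries reduce to near-paths, confirms the base of this bookkeeping and completes the proof.
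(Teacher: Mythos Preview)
Your argument is correct and follows the paper's template of analyzing the trie structure, just with a slightly different bookkeeping. Where the paper (as its hint indicates) splits on $x=y$ versus $x\neq y$ and tracks the explicit subtrees $T$ and $R_i$ with multiplicities as in Lemma~\ref{lem:arbrex00}, you instead introduce the aggregate counts $B$ and $C$ and split on whether $x$ occurs in $yu$; this absorbs the degenerate case $u=y^n$ into the general computation and makes the arithmetic a touch cleaner, but the underlying idea---that the subtree below the leading $x$ is the full subword trie of the tail while the other first-level branches are unchanged copies of the $R_i$---is the same.
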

\begin{proof}
The proof is similar to the proof of Lemma~\ref{lem:arbrex00}. Observe that one needs to divide the proof into two cases according to whether $x$ is equal to $y$ or not. As a first case, also consider $u=y^n$ with $n\ge 0$ instead of $u=0^n$ with $n\ge 0$.
\end{proof}

\begin{lemma}\label{lem:arbrexxy}
For all letters $x,y\in\{1,\ldots,b-1\}$ and each word $u\in\{0,\ldots,b-1\}^*$, we have
$$\#\left\{ v\in L_b \mid \binom{xxyu}{v} > 0 \right\} = 2\cdot  \#\left\{ v\in L_b \mid \binom{xyu}{v} > 0 \right\} - \#\left\{ v\in L_b \mid \binom{yu}{v} > 0 \right\}.$$
\end{lemma}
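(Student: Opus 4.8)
As in Lemma~\ref{lem:arbrex00}, the plan is to pass to tries: by Remark~\ref{rem:numberofnodes} the claimed identity is equivalent to $\# \mathcal{T}(xxyu) = 2\,\# \mathcal{T}(xyu) - \# \mathcal{T}(yu)$. Writing $v = yu$, a non-empty word whose first letter $y$ is non-zero, it thus suffices to prove $\# \mathcal{T}(xxv) = 2\,\# \mathcal{T}(xv) - \# \mathcal{T}(v)$, and I would obtain this by isolating the effect of prepending one, and then two, copies of the letter $x$ in front of $v$, rather than by expanding the full $R_i$-decomposition used in Lemma~\ref{lem:arbrex00}.

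First I would record the local structure created by a leading $x$. Let $\widehat{\mathcal{T}}(w)$ denote the full trie of subwords of $w$, that is, $\mathcal{T}(w)$ together with the subtree hanging below the child $0$ of the root. After consuming the leftmost letter of $xv$ (resp.\ of $xxv$), the remaining subword ranges over all subwords of $v$ (resp.\ of $xv$); hence the subtree of $\mathcal{T}(xv)$ rooted at the first-level node $x$ is an isomorphic copy of $\widehat{\mathcal{T}}(v)$, and the subtree of $\mathcal{T}(xxv)$ rooted at its first-level node $x$ is an isomorphic copy of $\widehat{\mathcal{T}}(xv)$. On the other hand, for every non-zero letter $a \neq x$ occurring in $v$, the first occurrence of $a$ already lies inside $v$ and its continuation is unaffected by prepending copies of $x$, so the subtree rooted at the first-level node $a$ is literally the same subtree in $\mathcal{T}(v)$, in $\mathcal{T}(xv)$ and in $\mathcal{T}(xxv)$. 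Consequently $\mathcal{T}(xxv)$ and $\mathcal{T}(xv)$ agree everywhere except below their common leading node $x$, which gives $\# \mathcal{T}(xxv) - \# \mathcal{T}(xv) = \# \widehat{\mathcal{T}}(xv) - \# \widehat{\mathcal{T}}(v)$.

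To finish I would compare the ``hat'' and ``unhat'' counts. Prepending a non-zero letter to $v$ changes neither the first $0$ of the word nor the suffix following it, so the subtree below the root-child $0$ is the same in $\widehat{\mathcal{T}}(v)$ and in $\widehat{\mathcal{T}}(xv)$; equivalently $\# \widehat{\mathcal{T}}(w) - \# \mathcal{T}(w)$ takes the same value for $w = v$ and $w = xv$. Subtracting yields $\# \widehat{\mathcal{T}}(xv) - \# \widehat{\mathcal{T}}(v) = \# \mathcal{T}(xv) - \# \mathcal{T}(v)$, and combining with the previous equality gives exactly $\# \mathcal{T}(xxv) = 2\,\# \mathcal{T}(xv) - \# \mathcal{T}(v)$.

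The only delicate point, and where I expect the main (minor) obstacle to sit, is the bookkeeping of the first-level children of the root when $x$ itself occurs in $v$: this is forced when $x = y$ and may also happen inside $u$ when $x \neq y$. In that situation the leading $x$ absorbs every later occurrence of $x$, so $v$ contributes no separate first-level child $x$, and one must check that the root's child set is $\{x\} \cup \{a : a \neq x \text{ a non-zero letter of } v\}$ in all three tries with no subtree counted twice. As in Lemma~\ref{lem:arbrex00} I would first dispose of the degenerate case where $v$ has a single distinct non-zero letter (for instance $v = y^n$, giving essentially linear tries) to anchor the structural description, and then run the argument above; this split between $x = y$ and $x \neq y$ is precisely the subtlety already flagged for Lemma~\ref{lem:arbrex0y}.
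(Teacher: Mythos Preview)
Your proof is correct but takes a genuinely different route from the paper. The paper simply declares this lemma ``similar to Lemma~\ref{lem:arbrex0y}'', which in turn is ``similar to Lemma~\ref{lem:arbrex00}'': that means carrying out the full $R_i$-decomposition of each of the three tries, splitting into the cases $x=y$ versus $x\neq y$, treating the degenerate case $u=y^n$ separately, and then checking a linear identity among the node counts. Your approach instead sets $v=yu$ and proves $\#\mathcal{T}(xxv)=2\,\#\mathcal{T}(xv)-\#\mathcal{T}(v)$ by a clean two-step telescoping: (i) the tries $\mathcal{T}(xxv)$ and $\mathcal{T}(xv)$ agree outside the first-level node $x$, whose subtree is $\widehat{\mathcal{T}}(xv)$ (resp.\ $\widehat{\mathcal{T}}(v)$); (ii) since $x\neq 0$, prepending $x$ leaves the root's $0$-subtree unchanged, so $\#\widehat{\mathcal{T}}(\cdot)-\#\mathcal{T}(\cdot)$ is the same for $v$ and $xv$. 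This is slicker and in fact handles all cases uniformly: the degenerate case $v=y^n$ and the $x=y$ versus $x\neq y$ split you flag at the end are not actually needed for your argument to go through, so you can drop that final paragraph. The paper's brute-force decomposition has the advantage that it applies verbatim to all five lemmas with only cosmetic changes, whereas your ``hat'' trick relies on the prepended letter being non-zero and would need modification for, say, Lemma~\ref{lem:arbrex00}.
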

\begin{proof}
The proof is similar to the proof of Lemma~\ref{lem:arbrex0y}. 
\end{proof}

The next lemma having a slightly more technical proof, we present it.

\begin{lemma}\label{lem:arbrexyz}
For all letters $x,y\in\{1,\ldots,b-1\}$ with $x\ne y$, $z\in\{0,\ldots,b-1\}$ and each word $u\in\{0,\ldots,b-1\}^*$, we have
\begin{align*}
\#\left\{ v\in L_b \mid \binom{xyzu}{v} > 0 \right\} =& \#\left\{ v\in L_b \mid \binom{xzu}{v} > 0 \right\} + 2\cdot \#\left\{ v\in L_b \mid \binom{yzu}{v} > 0 \right\} \\
& - 2\cdot \#\left\{ v\in L_b \mid \binom{zu}{v} > 0 \right\}.
\end{align*}
\end{lemma}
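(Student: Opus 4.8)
The plan is to mimic the combinatorial analysis used in Lemma~\ref{lem:arbrex00}, comparing the three tries $\mathcal{T}(xyzu)$, $\mathcal{T}(xzu)$, $\mathcal{T}(yzu)$ and $\mathcal{T}(zu)$ node by node via their common subtrees, and then to verify the claimed linear identity by a bookkeeping count. By Remark~\ref{rem:numberofnodes} the statement is equivalent to
\[
\# \mathcal{T}(xyzu) = \# \mathcal{T}(xzu) + 2\,\# \mathcal{T}(yzu) - 2\,\# \mathcal{T}(zu),
\]
so everything reduces to counting nodes. The added subtlety here, relative to Lemmas~\ref{lem:arbrex00}--\ref{lem:arbrexxy}, is that the prefix now begins with two \emph{distinct} nonzero letters $x\ne y$ followed by an arbitrary third letter $z$, so the root $\varepsilon$ of each trie can have several children rather than essentially one, and I expect to track at least the children $x$ and $y$ separately.

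First I would set up the common building blocks. For each distinct nonzero letter $a$ appearing in the relevant word, let $R_a$ denote the subtree hanging below the first occurrence of $a$; the key structural fact, exactly as in Lemma~\ref{lem:arbrex00}, is that these subtrees are shared across the four tries because the set of subwords reachable after fixing a given first occurrence of a letter depends only on the suffix. I would first isolate the subtree $T$ rooted at the node reached by reading the whole nonzero prefix, which is common to all four tries, and then describe how many times each $R_a$ occurs as a rooted subtree in each of $\mathcal{T}(xyzu)$, $\mathcal{T}(xzu)$, $\mathcal{T}(yzu)$, $\mathcal{T}(zu)$. The letters needing care are precisely $x$, $y$, $z$ (with possible coincidences $z=x$, $z=y$, or $z=0$), since inserting or deleting the second letter $y$, or moving between $xz$ and $yz$, changes only the multiplicities of the subtrees attached to these three letters while leaving every other $R_a$ untouched.

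The hard part will be the case analysis forced by the coincidences among $x$, $y$ and $z$: the hypothesis guarantees $x\ne y$, but $z$ may equal $x$, equal $y$, equal $0$, or be a genuinely new letter, and each possibility slightly alters which nodes coincide and thus the precise multiplicities in the count. Following the hint in Lemma~\ref{lem:arbrex0y}, I would first dispose of the degenerate base case where $u$ contains no new letters—taking $u$ a power of a single letter so that all four tries are essentially linear and the identity is a direct length computation—and then handle the general case, where $u$ introduces further distinct letters, by the subtree-multiplicity argument above. In the general case each trie decomposes as a root contributing $1$, the shared subtree $T$, and a collection of copies of the $R_a$'s; I would tabulate the coefficient of each $\#R_a$ and of $\#T$ on both sides of the target identity and check they match.

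Concretely, I expect the accounting to show that every $R_a$ for a letter $a\notin\{x,y,z\}$ appears with the same multiplicity in $\mathcal{T}(xyzu)$, in $\mathcal{T}(xzu)$, and in $2\,\mathcal{T}(yzu)-2\,\mathcal{T}(zu)$, so that these terms cancel on the right-hand side and contribute identically on the left; the entire content of the lemma then lives in the multiplicities of $R_x$, $R_y$, $R_z$ together with the constant term from the roots and the copies of $T$. Matching these finitely many coefficients—using that inserting $y$ just before $zu$ duplicates exactly the subtree structure that the factor $2$ on $\# \mathcal{T}(yzu)$ is designed to capture, while the $-2\,\# \mathcal{T}(zu)$ corrects for the doubly counted material below $z$—yields the asserted equality. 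I would present the argument with a schematic figure analogous to Figure~\ref{fig:tree_x00}, indicating the shared subtrees by dashed arcs, and conclude by displaying the single reconciling arithmetic identity, being careful to keep the whole computation on one line inside the display so as not to break the math environment.
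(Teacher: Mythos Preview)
Your proposal is correct and follows essentially the same route as the paper's own proof: reduce to node counts via Remark~\ref{rem:numberofnodes}, isolate the common subtree $T$ and the subtrees $R_i$ attached to first occurrences of the remaining letters, split into the case $z=0$ versus $z\neq 0$ (the latter further split according to whether $x=z$ or $y=z$), dispose of the degenerate $u=z^n$ or $u=0^n$ situations by direct count, and then verify the linear identity by matching the multiplicities of $\#T$ and the $\#R_i$'s on both sides. The only refinement to add when you execute the plan is that in the $z=0$ case the quantity on the right involves $\#\{v\in L_b\mid \binom{\rep_b(\val_b(u))}{v}>0\}$ rather than a count for $0u$ directly, since leading zeroes are stripped in $L_b$; this slightly changes the shape of the tree $\mathcal{T}(zu)$ but not the bookkeeping strategy.
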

\begin{proof}
Let $x,y\in\{1,\ldots,b-1\}$ with $x\ne y$, $z\in\{0,\ldots,b-1\}$, and let $u\in\{0,\ldots,b-1\}^*$. 
Our reasoning is again based on the structure of the associated trees.
The proof is divided into two cases depending on the fact that $z=0$ or not. 

$\bullet$ As a first case, suppose that $z\ne 0$. 
Now assume that $u$ is of the form $u=z^n$, $n\ge 0$. 
If $x\ne z$ and $y\ne z$, the tree $\mathcal{T}(zu)$ is linear and has $n+2$ nodes, $\mathcal{T}(xzu)$ and $\mathcal{T}(yzu)$ have $2(n+2)$ nodes and $\mathcal{T}(xyzu)$ has $4(n+2)$ nodes and the claimed formula holds. If $x\ne z$ and $y = z$, the tree $\mathcal{T}(zu)$ is linear and has $n+2$ nodes, $\mathcal{T}(xzu)$ has $2(n+2)$ nodes, $\mathcal{T}(yzu)$ has $n+3$ nodes and $\mathcal{T}(xyzu)$ has $2(n+3)$ nodes and the claimed formula holds.
If $x = z$ and $y \ne z$, the tree $\mathcal{T}(zu)$ is linear and has $n+2$ nodes, $\mathcal{T}(xzu)$ has $n+3$ nodes, $\mathcal{T}(yzu)$ has $2(n+2)$ nodes and $\mathcal{T}(xyzu)$ has $3(n+2)+1$ nodes and the claimed formula holds.

Now suppose that $u$ contains other letters than $z$. 
We let $a_1, \ldots, a_m$ denote all the pairwise distinct letters of $u$ different from $z$.
They are implicitly ordered with respect to their first appearance in $u$. 
If $x,y,0\in\{a_1,\ldots,a_m\}$, we let $i_x,i_y,i_0\in\{1,\ldots,m\}$ respectively denote the indices such that $a_{i_x}=x$, $a_{i_y}=y$ and $a_{i_0}=0$.
For all $i\in\{1,\ldots,m\}$, we let $u_i a_i$ denote the prefix of $u$ that ends with the first occurrence of the letter $a_i$ in $u$, and we let $R_i$ denote the subtree of $\mathcal{T}(zu)$ with root $zu_i a_i$.

First, observe that the subtree $T$ of $\mathcal{T}(zu)$ with root $z$ is equal to the subtree of $\mathcal{T}(xzu)$ with root $xz$, to the subtree of $\mathcal{T}(yzu)$ with root $yz$ and also to the subtree of $\mathcal{T}(xyzu)$ with root $xyz$. 

Suppose that $x\ne z$ and $y\ne z$.  
\begin{figure}
\begin{minipage}[c]{0.10\linewidth}
\begin{tikzpicture}
[decoration=snake,
line around/.style={decoration={pre length=#1,post length=#1}}]

  \tikzstyle{every state}=[shape=circle,radius=0.01cm,fill=black,scale=0.1]

  \node[state] 		   (E) at (0,1)        {};
  \node[state] 		   (A) at (0,0)        {};
  \node[state]        (B) at (0,-1)		   {};
  \node[state]        (D) at (1,0.5)		   {};
  \node[state]        (F) at (-1,-0.5)		   {};
  \node[state]        (G) at (-1,0.5)		   {};

\tikzstyle{every node}=[shape=circle,fill=none,draw=none,minimum size=10pt,inner sep=2pt]
\node(a1) at (-0.3,0.5) {$x$};
\node(a2) at (-0.3,-0.5) {$z$};
\node(a3) at (-0.7,-1.3) {$T$};
\node(a4) at (0,-2) {};
\node(a6) at (1.5,0.3) {$R_i$};
\node(a7) at (1.2,1) {$a_i, i\ne i_x, i_0$};
\node(a7) at (-0.5,1) {$z$};
\node(a6) at (-1.5,0.3) {$T$};
\node(a6) at (-1.5,-0.7) {$R_i$};
\node(a1) at (-0.5,0) {$a_i$};

    \draw[] (E) -- (A);  
    \draw[] (A) -- (B);
    \draw[] (E) -- (D);
    \draw[dashed] (B) -- (a4);
    \draw[] (A) -- (F);
    \draw[] (E) -- (G);

    \draw[dashed] (-0.7,-1) node{} to[bend left] (0.7,-1)  node{};
   \draw[dashed] (0.6,0.5) node{} to[bend left] (1.6,0.5)  node{};
   \draw[dashed] (-0.6,0.5) node{} to[bend right] (-1.6,0.5); 
   \draw[dashed] (-0.6,-0.5) node{} to[bend right] (-1.6,-0.5) ;
\end{tikzpicture}
\end{minipage} \hspace{2.5cm}
\begin{minipage}[c]{0.10\linewidth}
\begin{tikzpicture}
[decoration=snake,
line around/.style={decoration={pre length=#1,post length=#1}}]

  \tikzstyle{every state}=[shape=circle,radius=0.01cm,fill=black,scale=0.1]

  \node[state] 		   (E) at (0,1)        {};
  \node[state] 		   (A) at (0,0)        {};
  \node[state]        (B) at (0,-1)		   {};
\node[state]        (D) at (1,0.5)		   {};
  \node[state]        (F) at (-1,-0.5)		   {};
  \node[state]        (G) at (-1,0.5)		   {};

\tikzstyle{every node}=[shape=circle,fill=none,draw=none,minimum size=10pt,inner sep=2pt]
\node(a1) at (-0.3,0.5) {$y$};
\node(a2) at (-0.3,-0.5) {$z$};
\node(a3) at (-0.7,-1.3) {$T$};
\node(a4) at (0,-2) {};
\node(a6) at (1.5,0.3) {$R_i$};
\node(a7) at (1.2,1) {$a_i, i\ne i_y, i_0$};
\node(a7) at (-0.5,1) {$z$};
\node(a6) at (-1.5,0.3) {$T$};
\node(a6) at (-1.5,-0.7) {$R_i$};
\node(a1) at (-0.5,0) {$a_i$};

    \draw[] (E) -- (A);  
    \draw[] (A) -- (B);
    \draw[] (E) -- (D);
    \draw[dashed] (B) -- (a4);
    \draw[] (A) -- (F);
    \draw[] (E) -- (G);

    \draw[dashed] (-0.7,-1) node{} to[bend left] (0.7,-1)  node{};
   \draw[dashed] (0.6,0.5) node{} to[bend left] (1.6,0.5)  node{};
   \draw[dashed] (-0.6,0.5) node{} to[bend right] (-1.6,0.5); 
   \draw[dashed] (-0.6,-0.5) node{} to[bend right] (-1.6,-0.5) ;
\end{tikzpicture}
\end{minipage} \hspace{2.5cm}
\begin{minipage}[c]{0.005\linewidth}
\begin{tikzpicture}
[decoration=snake,
line around/.style={decoration={pre length=#1,post length=#1}}]

  \tikzstyle{every state}=[shape=circle,radius=0.01cm,fill=black,scale=0.1]

  \node[state] 		   (E) at (0,1)        {};
  \node[state] 		   (A) at (0,0)        {};
  \node[state]        (B) at (0,-1)		   {};
  \node[state] 		   (C) at (0,-2)        {};
  \node[state] 		   (D) at (0,-3)        {};
  \node[state] 		   (F) at (0,-4)        {};
  \node[state] 		   (G) at (0,-5)        {};
  \node[state] 		   (H) at (0,-6)        {};
  \node[state]        (I) at (1,0.5)		   {};
  
\tikzstyle{every node}=[shape=circle,fill=none,draw=none,minimum size=10pt,inner sep=2pt]
\node(a1) at (-0.3,0.5) {$z$};
\node(a2) at (-0.3,-0.5) {$z$};
\node(a3) at (-0.3,-2.5) {$z$};
\node(a4) at (-0.3,-3.5) {$a_1$};
\node(a5) at (0,-7) {};
\node(a6) at (-0.3,-5.5) {$a_2$};
\node(a7) at (-1,-0.3) {$T$};
\node(a8) at (-1,-4.3) {$R_1$};
\node(a9) at (-1,-6.3) {$R_2$};
\node(a10) at (1.5,0.3) {$R_i$};
\node(a11) at (1,1) {$a_i, i\ne i_0$};

    \draw[] (E) -- (A);  
    \draw[] (A) -- (B);
    \draw[] (C) -- (D);
    \draw[] (D) -- (F);
    \draw[] (G) -- (H);
    \draw[] (E) -- (I);
    \draw[dashed] (H) -- (a5);
    \draw[decorate] (B) -- (C);
    \draw[decorate] (F) -- (G);
    
    \draw[dashed] (-1,0) node{} to[bend left] (1,0)  node{};
    \draw[dashed] (-1,-4) node{} to[bend left] (1,-4)  node{};
    \draw[dashed] (-1,-6) node{} to[bend left] (1,-6)  node{};
    \draw[dashed] (0.6,0.5) node{} to[bend left] (1.6,0.5)  node{};
;
\end{tikzpicture}
\end{minipage}\hspace{2.5cm}
\begin{minipage}[c]{0.10\linewidth}
\begin{tikzpicture}
[decoration=snake,
line around/.style={decoration={pre length=#1,post length=#1}}]

  \tikzstyle{every state}=[shape=circle,radius=0.01cm,fill=black,scale=0.1]

  \node[state] 		   (E) at (0,1)        {};
  \node[state] 		   (A) at (0,0)        {};
  \node[state]        (B) at (0,-1)		   {};
  \node[state]        (C) at (0,-2)		   {};
  \node[state]        (D) at (1,-0.5)		   {};
  \node[state]        (F) at (1,-1.5)		   {};
  \node[state]        (G) at (1,0.5)		   {};
  \node[state]        (H) at (-1,-0.5)		   {};        
  \node[state]        (I) at (-1,0.5)		   {};
  \node[state]        (J) at (1,1.5)		   {};
  \node[state] 		  (K) at (-2,1)        {};
  \node[state] 		  (L) at (-2,0)        {};

\tikzstyle{every node}=[shape=circle,fill=none,draw=none,minimum size=10pt,inner sep=2pt]
\node(a1) at (-0.3,0.5) {$x$};
\node(a2) at (-0.3,-0.5) {$y$};
\node(a3) at (-0.3,-1.5) {$z$};
\node(a4) at (-0.7,-2.3) {$T$};
\node(a5) at (0,-3) {};
\node(a6) at (1.5,-1.7) {$R_i$};
\node(a7) at (1.5,-0.7) {$R_i$};
\node(a7) at (-1.5,-0.7) {$T$};
\node(a8) at (1.5,0.3) {$R_i$};
\node(a9) at (1.6,0.9) {$a_i, i\ne i_x, i_y, i_0$};
\node(a10) at (0.5,-1) {$a_i$};
\node(a11) at (1.1,-0.1) {$a_i, i\ne i_y$};
\node(a9) at (0.5,1.5) {$z$};
\node(a9) at (1.3,1.5) {$T$};
\node(a9) at (-0.5,1) {$y$};
\node(a9) at (-0.5,0) {$z$};
\node(a9) at (-1.5,0) {$a_i$};
\node(a9) at (-1.5,1) {$z$};
\node(a9) at (-2.3,0) {$R_i$};
\node(a9) at (-2.3,1) {$T$};

    \draw[](E) -- (A);  
    \draw[] (A) -- (B);
    \draw[] (B) -- (C);
    \draw[](A) -- (D);
    \draw[](B) -- (F);  
    \draw[] (E) -- (G);
    \draw[](A) -- (H);
    \draw[](E) -- (I);
    \draw[](E) -- (J); 
    \draw[](I) -- (K);
    \draw[](I) -- (L);  
    \draw[dashed] (C) -- (a5);

   \draw[dashed] (-0.7,-2) node{} to[bend left] (0.7,-2)  node{};
   \draw[dashed] (0.6,-0.5) node{} to[bend left] (1.6,-0.5)  node{};
     \draw[dashed] (-0.6,-0.5) node{} to[bend right] (-1.6,-0.5)  node{};
   \draw[dashed] (0.6,-1.5) node{} to[bend left] (1.6,-1.5)  node{};
   \draw[dashed] (0.6,0.5) node{} to[bend left] (1.6,0.5)  node{};
   \draw[dashed] (0.9,2) node{} to[bend right] (0.9,1)  node{};
   \draw[dashed] (-2,1.5) node{} to[bend left] (-2,0.5)  node{};
   \draw[dashed] (-2,0.5) node{} to[bend left] (-2,-0.5)  node{};
;
\end{tikzpicture}
\end{minipage}
\caption{Schematic structure of the trees $\mathcal{T}(xzu)$, $\mathcal{T}(yzu)$, $\mathcal{T}(zu)$ and $\mathcal{T}(xyzu)$ when $x\ne z$, $y\ne z$ and $z\ne 0$.}
\label{fig:tree_xyz1}
\end{figure}
Using the same reasoning as in the proof of Lemma~\ref{lem:arbrex00}, the situation is depicted in Figure~\ref{fig:tree_xyz1}. 
The claimed formula holds since 
\begin{align*}
&(2+2\#T + 2\sum_{\substack{1 \le i \le m \\ i \neq i_x, i_y, i_0}} \#R_i + \#R_{i_x} + 2 \#R_{i_y} + \#R_{i_0}) \\
&+2\cdot (2+2\#T + 2\sum_{\substack{1 \le i \le m \\ i \neq i_x, i_y, i_0}} \#R_i + 2 \#R_{i_x} + \#R_{i_y} + \#R_{i_0}) \\
&- 2\cdot (1+\#T + \sum_{\substack{1 \le i \le m \\ i \neq i_x, i_y, i_0}} \#R_i + \#R_{i_x} + \#R_{i_y}) \\
&= 4+4\#T + 4\sum_{\substack{1 \le i \le m \\ i \neq i_x, i_y, i_0}} \#R_i + 3\#R_{i_x} + 2 \#R_{i_y} + 3\#R_{i_0}.
\end{align*}

Suppose that $x\ne z$ and $y= z$.  
\begin{figure}
\begin{minipage}[c]{0.10\linewidth}
\begin{tikzpicture}
[decoration=snake,
line around/.style={decoration={pre length=#1,post length=#1}}]

  \tikzstyle{every state}=[shape=circle,radius=0.01cm,fill=black,scale=0.1]

  \node[state] 		   (E) at (0,1)        {};
  \node[state] 		   (A) at (0,0)        {};
  \node[state]        (B) at (0,-1)		   {};
  \node[state]        (D) at (1,0.5)		   {};
  \node[state]        (F) at (-1,-0.5)		   {};
  \node[state]        (G) at (-1,0.5)		   {};

\tikzstyle{every node}=[shape=circle,fill=none,draw=none,minimum size=10pt,inner sep=2pt]
\node(a1) at (-0.3,0.5) {$x$};
\node(a2) at (-0.3,-0.5) {$z$};
\node(a3) at (-0.7,-1.3) {$T$};
\node(a4) at (0,-2) {};
\node(a6) at (1.5,0.3) {$R_i$};
\node(a7) at (1.2,1) {$a_i, i\ne i_x, i_0$};
\node(a7) at (-0.5,1) {$z$};
\node(a6) at (-1.5,0.3) {$T$};
\node(a6) at (-1.5,-0.7) {$R_i$};
\node(a1) at (-0.5,0) {$a_i$};

    \draw[] (E) -- (A);  
    \draw[] (A) -- (B);
    \draw[] (E) -- (D);
    \draw[dashed] (B) -- (a4);
    \draw[] (A) -- (F);
    \draw[] (E) -- (G);

    \draw[dashed] (-0.7,-1) node{} to[bend left] (0.7,-1)  node{};
   \draw[dashed] (0.6,0.5) node{} to[bend left] (1.6,0.5)  node{};
   \draw[dashed] (-0.6,0.5) node{} to[bend right] (-1.6,0.5); 
   \draw[dashed] (-0.6,-0.5) node{} to[bend right] (-1.6,-0.5) ;
\end{tikzpicture}
\end{minipage} \hspace{2.5cm}
\begin{minipage}[c]{0.10\linewidth}
\begin{tikzpicture}
[decoration=snake,
line around/.style={decoration={pre length=#1,post length=#1}}]

  \tikzstyle{every state}=[shape=circle,radius=0.01cm,fill=black,scale=0.1]

  \node[state] 		   (E) at (0,1)        {};
  \node[state] 		   (A) at (0,0)        {};
  \node[state]        (B) at (0,-1)		   {};
  \node[state]        (D) at (1,0.5)		   {};
  \node[state]        (F) at (-1,-0.5)		   {};

\tikzstyle{every node}=[shape=circle,fill=none,draw=none,minimum size=10pt,inner sep=2pt]
\node(a1) at (-0.3,0.5) {$z$};
\node(a2) at (-0.3,-0.5) {$z$};
\node(a3) at (-0.7,-1.3) {$T$};
\node(a4) at (0,-2) {};
\node(a6) at (1.5,0.3) {$R_i$};
\node(a7) at (1,1) {$a_i, i\ne i_0$};
\node(a6) at (-1.5,-0.7) {$R_i$};
\node(a1) at (-0.5,0) {$a_i$};

    \draw[] (E) -- (A);  
    \draw[] (A) -- (B);
    \draw[] (E) -- (D);
    \draw[dashed] (B) -- (a4);
    \draw[] (A) -- (F);

    \draw[dashed] (-0.7,-1) node{} to[bend left] (0.7,-1)  node{};
   \draw[dashed] (0.6,0.5) node{} to[bend left] (1.6,0.5)  node{};
   \draw[dashed] (-0.6,-0.5) node{} to[bend right] (-1.6,-0.5) ;
\end{tikzpicture}
\end{minipage} \hspace{2.5cm}
\begin{minipage}[c]{0.005\linewidth}
\begin{tikzpicture}
[decoration=snake,
line around/.style={decoration={pre length=#1,post length=#1}}]

  \tikzstyle{every state}=[shape=circle,radius=0.01cm,fill=black,scale=0.1]

  \node[state] 		   (E) at (0,1)        {};
  \node[state] 		   (A) at (0,0)        {};
  \node[state]        (B) at (0,-1)		   {};
  \node[state] 		   (C) at (0,-2)        {};
  \node[state] 		   (D) at (0,-3)        {};
  \node[state] 		   (F) at (0,-4)        {};
  \node[state] 		   (G) at (0,-5)        {};
  \node[state] 		   (H) at (0,-6)        {};
  \node[state]        (I) at (1,0.5)		   {};
  
\tikzstyle{every node}=[shape=circle,fill=none,draw=none,minimum size=10pt,inner sep=2pt]
\node(a1) at (-0.3,0.5) {$z$};
\node(a2) at (-0.3,-0.5) {$z$};
\node(a3) at (-0.3,-2.5) {$z$};
\node(a4) at (-0.3,-3.5) {$a_1$};
\node(a5) at (0,-7) {};
\node(a6) at (-0.3,-5.5) {$a_2$};
\node(a7) at (-1,-0.3) {$T$};
\node(a8) at (-1,-4.3) {$R_1$};
\node(a9) at (-1,-6.3) {$R_2$};
\node(a10) at (1.5,0.3) {$R_i$};
\node(a11) at (1,1) {$a_i, i\ne i_0$};

    \draw[] (E) -- (A);  
    \draw[] (A) -- (B);
    \draw[] (C) -- (D);
    \draw[] (D) -- (F);
    \draw[] (G) -- (H);
    \draw[] (E) -- (I);
    \draw[dashed] (H) -- (a5);
    \draw[decorate] (B) -- (C);
    \draw[decorate] (F) -- (G);
    
    \draw[dashed] (-1,0) node{} to[bend left] (1,0)  node{};
    \draw[dashed] (-1,-4) node{} to[bend left] (1,-4)  node{};
    \draw[dashed] (-1,-6) node{} to[bend left] (1,-6)  node{};
    \draw[dashed] (0.6,0.5) node{} to[bend left] (1.6,0.5)  node{};
;
\end{tikzpicture}
\end{minipage} \hspace{2.5cm}
\begin{minipage}[c]{0.10\linewidth}
\begin{tikzpicture}
[decoration=snake,
line around/.style={decoration={pre length=#1,post length=#1}}]

  \tikzstyle{every state}=[shape=circle,radius=0.01cm,fill=black,scale=0.1]

  \node[state] 		   (E) at (0,1)        {};
  \node[state] 		   (A) at (0,0)        {};
  \node[state]        (B) at (0,-1)		   {};
  \node[state]        (C) at (0,-2)		   {};
  \node[state]        (D) at (1,-0.5)		   {};
  \node[state]        (F) at (1,-1.5)		   {};
  \node[state]        (G) at (1,0.5)		   {};
  \node[state]        (I) at (-1,0.5)		   {};
  \node[state] 		  (K) at (-2,1)        {};
  \node[state] 		  (L) at (-2,0)        {};

\tikzstyle{every node}=[shape=circle,fill=none,draw=none,minimum size=10pt,inner sep=2pt]
\node(a1) at (-0.3,0.5) {$x$};
\node(a2) at (-0.3,-0.5) {$z$};
\node(a3) at (-0.3,-1.5) {$z$};
\node(a4) at (-0.7,-2.3) {$T$};
\node(a5) at (0,-3) {};
\node(a6) at (1.5,-1.7) {$R_i$};
\node(a7) at (1.5,-0.7) {$R_i$};
\node(a8) at (1.5,0.3) {$R_i$};
\node(a9) at (1.4,0.9) {$a_i, i\ne i_x, i_0$};
\node(a10) at (0.5,-1) {$a_i$};
\node(a11) at (0.5,-0.1) {$a_i$};
\node(a9) at (-0.5,1) {$z$};
\node(a9) at (-1.5,0) {$a_i$};
\node(a9) at (-1.5,1) {$z$};
\node(a9) at (-2.3,0) {$R_i$};
\node(a9) at (-2.3,1) {$T$};

    \draw[](E) -- (A);  
    \draw[] (A) -- (B);
    \draw[] (B) -- (C);
    \draw[](A) -- (D);
    \draw[](B) -- (F);  
    \draw[] (E) -- (G);
    \draw[](E) -- (I);
    \draw[](I) -- (K);
    \draw[](I) -- (L);  
    \draw[dashed] (C) -- (a5);

   \draw[dashed] (-0.7,-2) node{} to[bend left] (0.7,-2)  node{};
   \draw[dashed] (0.6,-0.5) node{} to[bend left] (1.6,-0.5)  node{};
   \draw[dashed] (0.6,-1.5) node{} to[bend left] (1.6,-1.5)  node{};
   \draw[dashed] (0.6,0.5) node{} to[bend left] (1.6,0.5)  node{};
   \draw[dashed] (-2,1.5) node{} to[bend left] (-2,0.5)  node{};
   \draw[dashed] (-2,0.5) node{} to[bend left] (-2,-0.5)  node{};
;
\end{tikzpicture}
\end{minipage}
\caption{Schematic structure of the trees $\mathcal{T}(xzu)$, $\mathcal{T}(yzu)$, $\mathcal{T}(zu)$ and $\mathcal{T}(xyzu)$ when $x\ne z$, $y= z$ and $z\ne 0$.}
\label{fig:tree_xyz2}
\end{figure}
The situation is depicted in Figure~\ref{fig:tree_xyz2}. The claimed formula holds since 
\begin{align*}
&(2+2\#T + 2\sum_{\substack{1 \le i \le m \\ i \neq i_x, i_0}} \#R_i + \#R_{i_x} + \#R_{i_0}) \\
&+2\cdot (2+\#T + 2\sum_{\substack{1 \le i \le m \\ i \neq i_x, i_0}} \#R_i + 2 \#R_{i_x} + \#R_{i_0}) \\
&- 2\cdot (1+\#T + \sum_{\substack{1 \le i \le m \\ i \neq i_x, i_0}} \#R_i + \#R_{i_x} ) \\
&= 4+2\#T + 4\sum_{\substack{1 \le i \le m \\ i \neq i_x, i_0}} \#R_i + 3\#R_{i_x} + 3\#R_{i_0}
.
\end{align*}

Suppose that $x= z$ and $y\ne z$.
\begin{figure}
\begin{minipage}[c]{0.10\linewidth}
\begin{tikzpicture}
[decoration=snake,
line around/.style={decoration={pre length=#1,post length=#1}}]

  \tikzstyle{every state}=[shape=circle,radius=0.01cm,fill=black,scale=0.1]

  \node[state] 		   (E) at (0,1)        {};
  \node[state] 		   (A) at (0,0)        {};
  \node[state]        (B) at (0,-1)		   {};
  \node[state]        (D) at (1,0.5)		   {};
  \node[state]        (F) at (-1,-0.5)		   {};
  
\tikzstyle{every node}=[shape=circle,fill=none,draw=none,minimum size=10pt,inner sep=2pt]
\node(a1) at (-0.3,0.5) {$z$};
\node(a2) at (-0.3,-0.5) {$z$};
\node(a3) at (-0.7,-1.3) {$T$};
\node(a4) at (0,-2) {};
\node(a6) at (1.5,0.3) {$R_i$};
\node(a7) at (1,1) {$a_i, i\ne  i_0$};
\node(a6) at (-1.5,-0.7) {$R_i$};
\node(a1) at (-0.5,0) {$a_i$};

    \draw[] (E) -- (A);  
    \draw[] (A) -- (B);
    \draw[] (E) -- (D);
    \draw[dashed] (B) -- (a4);
    \draw[] (A) -- (F);

    \draw[dashed] (-0.7,-1) node{} to[bend left] (0.7,-1)  node{};
   \draw[dashed] (0.6,0.5) node{} to[bend left] (1.6,0.5)  node{};
   \draw[dashed] (-0.6,-0.5) node{} to[bend right] (-1.6,-0.5) ;
\end{tikzpicture}
\end{minipage} \hspace{2.5cm}
\begin{minipage}[c]{0.10\linewidth}
\begin{tikzpicture}
[decoration=snake,
line around/.style={decoration={pre length=#1,post length=#1}}]

  \tikzstyle{every state}=[shape=circle,radius=0.01cm,fill=black,scale=0.1]

  \node[state] 		   (E) at (0,1)        {};
  \node[state] 		   (A) at (0,0)        {};
  \node[state]        (B) at (0,-1)		   {};
  \node[state]        (D) at (1,0.5)		   {};
  \node[state]        (F) at (-1,-0.5)		   {};
  \node[state]        (G) at (-1,0.5)		   {};

\tikzstyle{every node}=[shape=circle,fill=none,draw=none,minimum size=10pt,inner sep=2pt]
\node(a1) at (-0.3,0.5) {$y$};
\node(a2) at (-0.3,-0.5) {$z$};
\node(a3) at (-0.7,-1.3) {$T$};
\node(a4) at (0,-2) {};
\node(a6) at (1.5,0.3) {$R_i$};
\node(a7) at (1.2,1) {$a_i, i\ne i_y, i_0$};
\node(a7) at (-0.5,1) {$z$};
\node(a6) at (-1.5,0.3) {$T$};
\node(a6) at (-1.5,-0.7) {$R_i$};
\node(a1) at (-0.5,0) {$a_i$};

    \draw[] (E) -- (A);  
    \draw[] (A) -- (B);
    \draw[] (E) -- (D);
    \draw[dashed] (B) -- (a4);
    \draw[] (A) -- (F);
    \draw[] (E) -- (G);

    \draw[dashed] (-0.7,-1) node{} to[bend left] (0.7,-1)  node{};
   \draw[dashed] (0.6,0.5) node{} to[bend left] (1.6,0.5)  node{};
   \draw[dashed] (-0.6,0.5) node{} to[bend right] (-1.6,0.5); 
   \draw[dashed] (-0.6,-0.5) node{} to[bend right] (-1.6,-0.5) ;
\end{tikzpicture}
\end{minipage} \hspace{2.5cm}
\begin{minipage}[c]{0.005\linewidth}
\begin{tikzpicture}
[decoration=snake,
line around/.style={decoration={pre length=#1,post length=#1}}]

  \tikzstyle{every state}=[shape=circle,radius=0.01cm,fill=black,scale=0.1]

  \node[state] 		   (E) at (0,1)        {};
  \node[state] 		   (A) at (0,0)        {};
  \node[state]        (B) at (0,-1)		   {};
  \node[state] 		   (C) at (0,-2)        {};
  \node[state] 		   (D) at (0,-3)        {};
  \node[state] 		   (F) at (0,-4)        {};
  \node[state] 		   (G) at (0,-5)        {};
  \node[state] 		   (H) at (0,-6)        {};
  \node[state]        (I) at (1,0.5)		   {};

\tikzstyle{every node}=[shape=circle,fill=none,draw=none,minimum size=10pt,inner sep=2pt]
\node(a1) at (-0.3,0.5) {$z$};
\node(a2) at (-0.3,-0.5) {$z$};
\node(a3) at (-0.3,-2.5) {$z$};
\node(a4) at (-0.3,-3.5) {$a_1$};
\node(a5) at (0,-7) {};
\node(a6) at (-0.3,-5.5) {$a_2$};
\node(a7) at (-1,-0.3) {$T$};
\node(a8) at (-1,-4.3) {$R_1$};
\node(a9) at (-1,-6.3) {$R_2$};
\node(a10) at (1.5,0.3) {$R_i$};
\node(a11) at (1,1) {$a_i, i\ne i_0$};

    \draw[] (E) -- (A);  
    \draw[] (A) -- (B);
    \draw[] (C) -- (D);
    \draw[] (D) -- (F);
    \draw[] (G) -- (H);
    \draw[] (E) -- (I);
    \draw[dashed] (H) -- (a5);
    \draw[decorate] (B) -- (C);
    \draw[decorate] (F) -- (G);
    
    \draw[dashed] (-1,0) node{} to[bend left] (1,0)  node{};
    \draw[dashed] (-1,-4) node{} to[bend left] (1,-4)  node{};
    \draw[dashed] (-1,-6) node{} to[bend left] (1,-6)  node{};
    \draw[dashed] (0.6,0.5) node{} to[bend left] (1.6,0.5)  node{};
;
\end{tikzpicture}
\end{minipage} \hspace{2.5cm}
\begin{minipage}[c]{0.10\linewidth}
\begin{tikzpicture}
[decoration=snake,
line around/.style={decoration={pre length=#1,post length=#1}}]

  \tikzstyle{every state}=[shape=circle,radius=0.01cm,fill=black,scale=0.1]

  \node[state] 		   (E) at (0,1)        {};
  \node[state] 		   (A) at (0,0)        {};
  \node[state]        (B) at (0,-1)		   {};
  \node[state]        (C) at (0,-2)		   {};
  \node[state]        (D) at (1,-0.5)		   {};
  \node[state]        (F) at (1,-1.5)		   {};
  \node[state]        (G) at (1,0.5)		   {};
  \node[state]        (I) at (-1,0.5)		   {};
  \node[state] 		  (K) at (-2,1)        {};
  \node[state] 		  (L) at (-2,0)        {};
  \node[state]        (M) at (-1,-0.5)		   {};

\tikzstyle{every node}=[shape=circle,fill=none,draw=none,minimum size=10pt,inner sep=2pt]
\node(a1) at (-0.3,0.5) {$z$};
\node(a2) at (-0.3,-0.5) {$y$};
\node(a3) at (-0.3,-1.5) {$z$};
\node(a4) at (-0.7,-2.3) {$T$};
\node(a5) at (0,-3) {};
\node(a6) at (1.5,-1.7) {$R_i$};
\node(a7) at (1.5,-0.7) {$R_i$};
\node(a8) at (1.5,0.3) {$R_i$};
\node(a9) at (1.4,0.9) {$a_i, i\ne i_y, i_0$};
\node(a10) at (0.5,-1) {$a_i$};
\node(a11) at (1.3,-0.1) {$a_i, i\ne i_y$};
\node(a11) at (-0.5,-0.1) {$z$};
\node(a9) at (-0.5,1) {$y$};
\node(a9) at (-1.5,0) {$a_i$};
\node(a9) at (-1.5,1) {$z$};
\node(a9) at (-2.3,0) {$R_i$};
\node(a9) at (-2.3,1) {$T$};
\node(a8) at (-1.5,-0.7) {$T$};

    \draw[](E) -- (A);  
    \draw[] (A) -- (B);
    \draw[] (B) -- (C);
    \draw[](A) -- (D);
    \draw[](B) -- (F);  
    \draw[] (E) -- (G);
    \draw[](E) -- (I);
    \draw[](I) -- (K);
    \draw[](I) -- (L);
    \draw[] (A) -- (M);  
    \draw[dashed] (C) -- (a5);

   \draw[dashed] (-0.7,-2) node{} to[bend left] (0.7,-2)  node{};
   \draw[dashed] (0.6,-0.5) node{} to[bend left] (1.6,-0.5)  node{};
   \draw[dashed] (0.6,-1.5) node{} to[bend left] (1.6,-1.5)  node{};
   \draw[dashed] (0.6,0.5) node{} to[bend left] (1.6,0.5)  node{};
    \draw[dashed] (-0.6,-0.5) node{} to[bend right] (-1.6,-0.5)  node{};
   \draw[dashed] (-2,1.5) node{} to[bend left] (-2,0.5)  node{};
   \draw[dashed] (-2,0.5) node{} to[bend left] (-2,-0.5)  node{};
;
\end{tikzpicture}
\end{minipage}
\caption{Schematic structure of the trees $\mathcal{T}(xzu)$, $\mathcal{T}(yzu)$, $\mathcal{T}(zu)$ and $\mathcal{T}(xyzu)$ when $x= z$, $y\ne  z$ and $z\ne 0$.}
\label{fig:tree_xyz3}
\end{figure}  
The situation is depicted in Figure~\ref{fig:tree_xyz3}. The claimed formula holds since 
\begin{align*}
&(2+\#T + 2\sum_{\substack{1 \le i \le m \\ i \neq i_y, i_0}} \#R_i + 2 \#R_{i_y} + \#R_{i_0}) \\
&+2\cdot (2+2\#T + 2\sum_{\substack{1 \le i \le m \\ i \neq i_y, i_0}} \#R_i + \#R_{i_y} + \#R_{i_0}) \\
&- 2\cdot (1+\#T + \sum_{\substack{1 \le i \le m \\ i \neq i_y, i_0}} \#R_i +\#R_{i_y}) \\
&= 4+3\#T + 4\sum_{\substack{1 \le i \le m \\ i \neq i_y, i_0}} \#R_i + 2 \#R_{i_y} + 3\#R_{i_0}
.
\end{align*}

$\bullet$ As a second case, suppose that $z=0$. 
Then, by convention, leading zeroes are not allowed in base-$b$ expansions and we must prove that the following formula holds
\begin{align*}
\#\left\{ v\in L_b \mid \binom{xy0u}{v} > 0 \right\} =& \#\left\{ v\in L_b \mid \binom{x0u}{v} > 0 \right\} + 2\cdot \#\left\{ v\in L_b \mid \binom{y0u}{v} > 0 \right\} \\
& - 2\cdot \#\left\{ v\in L_b \mid \binom{\rep_b(\val_b(u))}{v} > 0 \right\}.
\end{align*}
It is useful to note that $\rep_b(\val_b(\cdot)):\{0,\ldots,b-1\}^* \mapsto L_b$ plays a normalization role. It removes leading zeroes. 

If $u=0^n$, with $n\ge 0$, then $\rep_b(\val_b(u))=\varepsilon$ and the tree $\mathcal{T}(\rep_b(\val_b(u)))$ has only one node. 
The trees $\mathcal{T}(x0u)$ and $\mathcal{T}(y0u)$ both have $n+3$ nodes and the tree $\mathcal{T}(xy0u)$ has $3(n+2)+1$ nodes and the claimed formula holds. 

Now suppose that $u$ contains other letters than $0$. 
We let $a_1, \ldots, a_m$ denote all the pairwise distinct letters of $u$ different from $0$. 
They are implicitly ordered with respect to their first appearance in $u$.
If $x,y\in\{a_1,\ldots,a_m\}$, we let $i_x,i_y\in\{1,\ldots,m\}$ respectively denote the indices such that $a_{i_x}=x$ and $a_{i_y}=y$.
For all $i\in\{1,\ldots,m\}$, we let $u'_i a_i$ denote the prefix of $\rep_b(\val_b(u))$ that ends with the first occurrence of the letter $a_i$ in $\rep_b(\val_b(u))$, and we let $R_i$ denote the subtree of $\mathcal{T}(\rep_b(\val_b(u)))$ with root $u'_i a_i$.

\begin{figure}
\begin{minipage}[c]{0.10\linewidth}
\begin{tikzpicture}
[decoration=snake,
line around/.style={decoration={pre length=#1,post length=#1}}]

  \tikzstyle{every state}=[shape=circle,radius=0.01cm,fill=black,scale=0.1]

  \node[state] 		   (E) at (0,1)        {};
  \node[state] 		   (A) at (0,0)        {};
  \node[state]        (B) at (0,-1)		   {};
  \node[state]        (D) at (1,0.5)		   {};
  \node[state]        (F) at (-1,-0.5)		   {};

\tikzstyle{every node}=[shape=circle,fill=none,draw=none,minimum size=10pt,inner sep=2pt]
\node(a1) at (-0.3,0.5) {$x$};
\node(a2) at (-0.3,-0.5) {$0$};
\node(a3) at (-0.7,-1.3) {$T$};
\node(a4) at (0,-2) {};
\node(a6) at (1.5,0.3) {$R_i$};
\node(a7) at (1,1) {$a_i, i\ne i_x$};
\node(a6) at (-1.5,-0.7) {$R_i$};
\node(a1) at (-0.5,0) {$a_i$};

    \draw[] (E) -- (A);  
    \draw[] (A) -- (B);
    \draw[] (E) -- (D);
    \draw[dashed] (B) -- (a4);
    \draw[] (A) -- (F);

    \draw[dashed] (-0.7,-1) node{} to[bend left] (0.7,-1)  node{};
   \draw[dashed] (0.6,0.5) node{} to[bend left] (1.6,0.5)  node{};
   \draw[dashed] (-0.6,-0.5) node{} to[bend right] (-1.6,-0.5) ;
\end{tikzpicture}
\end{minipage} \hspace{2.5cm}
\begin{minipage}[c]{0.10\linewidth}
\begin{tikzpicture}
[decoration=snake,
line around/.style={decoration={pre length=#1,post length=#1}}]

  \tikzstyle{every state}=[shape=circle,radius=0.01cm,fill=black,scale=0.1]

  \node[state] 		   (E) at (0,1)        {};
  \node[state] 		   (A) at (0,0)        {};
  \node[state]        (B) at (0,-1)		   {};
  \node[state]        (D) at (1,0.5)		   {};
  \node[state]        (F) at (-1,-0.5)		   {};

\tikzstyle{every node}=[shape=circle,fill=none,draw=none,minimum size=10pt,inner sep=2pt]
\node(a1) at (-0.3,0.5) {$y$};
\node(a2) at (-0.3,-0.5) {$0$};
\node(a3) at (-0.7,-1.3) {$T$};
\node(a4) at (0,-2) {};
\node(a6) at (1.5,0.3) {$R_i$};
\node(a7) at (1,1) {$a_i, i\ne i_y$};
\node(a6) at (-1.5,-0.7) {$R_i$};
\node(a1) at (-0.5,0) {$a_i$};

    \draw[] (E) -- (A);  
    \draw[] (A) -- (B);
    \draw[] (E) -- (D);
    \draw[dashed] (B) -- (a4);
    \draw[] (A) -- (F);

    \draw[dashed] (-0.7,-1) node{} to[bend left] (0.7,-1)  node{};
   \draw[dashed] (0.6,0.5) node{} to[bend left] (1.6,0.5)  node{};
   \draw[dashed] (-0.6,-0.5) node{} to[bend right] (-1.6,-0.5) ;
\end{tikzpicture}
\end{minipage} \hspace{2.5cm}
\begin{minipage}[c]{0.005\linewidth}
\begin{tikzpicture}
[decoration=snake,
line around/.style={decoration={pre length=#1,post length=#1}}]

  \tikzstyle{every state}=[shape=circle,radius=0.01cm,fill=black,scale=0.1]

  \node[state] 		   (E) at (0,1)        {};
  \node[state] 		   (A) at (0,0)        {};
  \node[state]        (B) at (0,-1)		   {};
  \node[state] 		   (C) at (0,-2)        {};
  \node[state] 		   (D) at (0,-3)        {};
  \node[state] 		   (F) at (0,-4)        {};
  \node[state]        (I) at (1,0.5)		   {};
  
\tikzstyle{every node}=[shape=circle,fill=none,draw=none,minimum size=10pt,inner sep=2pt]
\node(a1) at (-0.3,0.5) {$a_1$};
\node(a2) at (-0.3,-0.5) {$a_1$};
\node(a3) at (-0.3,-2.5) {$a_1$};
\node(a4) at (-0.3,-3.5) {$a_2$};
\node(a5) at (0,-5) {};
\node(a7) at (-1,-0.3) {$R_1$};
\node(a8) at (-1,-4.3) {$R_2$};
\node(a10) at (1.5,0.3) {$R_i$};
\node(a11) at (1,1) {$a_i, i\ne 1$};

    \draw[] (E) -- (A);  
    \draw[] (A) -- (B);
    \draw[] (C) -- (D);
    \draw[] (D) -- (F);
    \draw[] (E) -- (I);
    \draw[dashed] (F) -- (a5);
    \draw[decorate] (B) -- (C);
    
    \draw[dashed] (-1,0) node{} to[bend left] (1,0)  node{};
    \draw[dashed] (-1,-4) node{} to[bend left] (1,-4)  node{};
    \draw[dashed] (0.6,0.5) node{} to[bend left] (1.6,0.5)  node{};
;
\end{tikzpicture}
\end{minipage} \hspace{2.5cm}
\begin{minipage}[c]{0.10\linewidth}
\begin{tikzpicture}
[decoration=snake,
line around/.style={decoration={pre length=#1,post length=#1}}]

  \tikzstyle{every state}=[shape=circle,radius=0.01cm,fill=black,scale=0.1]

  \node[state] 		   (E) at (0,1)        {};
  \node[state] 		   (A) at (0,0)        {};
  \node[state]        (B) at (0,-1)		   {};
  \node[state]        (C) at (0,-2)		   {};
  \node[state]        (D) at (1,-0.5)		   {};
  \node[state]        (F) at (1,-1.5)		   {};
  \node[state]        (G) at (1,0.5)		   {};
  \node[state]        (I) at (-1,0.5)		   {};
  \node[state] 		  (K) at (-2,1)        {};
  \node[state] 		  (L) at (-2,0)        {};
  \node[state]        (M) at (-1,-0.5)		   {};

\tikzstyle{every node}=[shape=circle,fill=none,draw=none,minimum size=10pt,inner sep=2pt]
\node(a1) at (-0.3,0.5) {$x$};
\node(a2) at (-0.3,-0.5) {$y$};
\node(a3) at (-0.3,-1.5) {$0$};
\node(a4) at (-0.7,-2.3) {$T$};
\node(a5) at (0,-3) {};
\node(a6) at (1.5,-1.7) {$R_i$};
\node(a7) at (1.5,-0.7) {$R_i$};
\node(a8) at (1.5,0.3) {$R_i$};
\node(a9) at (1.4,0.9) {$a_i, i\ne i_x, i_y$};
\node(a10) at (0.5,-1) {$a_i$};
\node(a11) at (1.3,-0.1) {$a_i, i\ne i_y$};
\node(a11) at (-0.6,-0.1) {$0$};
\node(a9) at (-0.5,1) {$y$};
\node(a9) at (-1.5,0) {$a_i$};
\node(a9) at (-1.5,1) {$0$};
\node(a9) at (-2.3,0) {$R_i$};
\node(a9) at (-2.3,1) {$T$};
\node(a8) at (-1.5,-0.7) {$T$};

    \draw[](E) -- (A);  
    \draw[] (A) -- (B);
    \draw[] (B) -- (C);
    \draw[](A) -- (D);
    \draw[](B) -- (F);  
    \draw[] (E) -- (G);
    \draw[](E) -- (I);
    \draw[](I) -- (K);
    \draw[](I) -- (L);
    \draw[] (A) -- (M);  
    \draw[dashed] (C) -- (a5);

   \draw[dashed] (-0.7,-2) node{} to[bend left] (0.7,-2)  node{};
   \draw[dashed] (0.6,-0.5) node{} to[bend left] (1.6,-0.5)  node{};
   \draw[dashed] (0.6,-1.5) node{} to[bend left] (1.6,-1.5)  node{};
   \draw[dashed] (0.6,0.5) node{} to[bend left] (1.6,0.5)  node{};
    \draw[dashed] (-0.6,-0.5) node{} to[bend right] (-1.6,-0.5)  node{};
   \draw[dashed] (-2,1.5) node{} to[bend left] (-2,0.5)  node{};
   \draw[dashed] (-2,0.5) node{} to[bend left] (-2,-0.5)  node{};
;
\end{tikzpicture}
\end{minipage}
\caption{Schematic structure of the trees $\mathcal{T}(x0u)$, $\mathcal{T}(y0u)$, $\mathcal{T}(\rep_b(\val_b(u)))$ and $\mathcal{T}(xy0u)$.}
\label{fig:tree_xyz4}
\end{figure}  
The situation is depicted in Figure~\ref{fig:tree_xyz4}. Observe that the subtree $T$ of $\mathcal{T}(y0u)$ with root $y0$ is equal to the subtree of $\mathcal{T}(x0u)$ with root $x0$ and to the subtree of $\mathcal{T}(xy0u)$ with root $xy0$. 
The claimed formula holds since 
\begin{align*}
&(2+\#T + 2\sum_{\substack{1 \le i \le m \\ i \neq i_x, i_y}} \#R_i + \#R_{i_x} + 2 \#R_{i_y}) \\
&+2\cdot (2+\#T + 2\sum_{\substack{1 \le i \le m \\ i \neq i_x, i_y}} \#R_i + 2 \#R_{i_x} + \#R_{i_y}) \\
&- 2\cdot (1 + \sum_{\substack{1 \le i \le m \\ i \neq i_x, i_y}} \#R_i + \#R_{i_x} +\#R_{i_y}) \\
&= 4+3\#T + 4\sum_{\substack{1 \le i \le m \\ i \neq i_x, i_y}} \#R_i + 3 \#R_{i_x} + 2\#R_{i_y}.
\end{align*}
\end{proof}

Those five lemmas can be translated into recurrence relations satisfied by the sequence $(S_b(n))_{n\ge 0}$ using Definition~\ref{def:S_b}. 

\begin{proof}[Proof of Proposition~\ref{prop:rel-s}]
The first part is clear using Table~\ref{tab:initialSb}. 
Let $x,y\in\{1,\ldots,b-1\}$ with $x\ne y$. Proceed by induction on $\ell\ge 1$.

Let us first prove~\eqref{eq:rec_Sb_1}. 
If $\ell=1$, then $r=0$ and~\eqref{eq:rec_Sb_1} follows from Table~\ref{tab:initialSb}. 
Now suppose that $\ell\ge 2$ and assume that~\eqref{eq:rec_Sb_1} holds for all $\ell'< \ell$. 
Let $r\in\{0,\ldots,b^{\ell-1}-1\}$, and let $u$ be a word in $\{0,\ldots,b-1\}^*$ such that $|u|\ge 1$ and $\rep_b(xb^\ell+r)=x0u$. 
The proof is divided into two parts according to the first letter of $u$. 
If $u=0u'$ with $u'\in\{0,\ldots,b-1\}^*$, then 
$$
\begin{array}{rclr}
	S_b(xb^\ell +r)
	&=& 2S_b(xb^{\ell-1}+r) - S_b(xb^{\ell-2}+r) & \text{(by Lemma~\ref{lem:arbrex00})}\\
	&=& 2(S_b(xb^{\ell-2}+r) + S_b(r)) - S_b(xb^{\ell-2}+r) & \text{(by induction hypothesis)}\\
	&=& S_b(xb^{\ell-2}+r) + S_b(r)  + S_b(r)& \\
	&=& S_b(xb^{\ell-1}+r) + S_b(r), & \text{(by induction hypothesis)}\\
\end{array}
$$
which proves~\eqref{eq:rec_Sb_1}. 
Now if $u=zu'$ with $z\in\{1,\ldots,b-1\}$ and $u'\in\{0,\ldots,b-1\}^*$, then~\eqref{eq:rec_Sb_1} directly follows from Definition~\ref{def:S_b} and Lemma~\ref{lem:arbrex0y}.

Let us prove~\eqref{eq:rec_Sb_2}. 
If $\ell=1$, then $r=0$ and~\eqref{eq:rec_Sb_1} follows from Table~\ref{tab:initialSb}. 
Now suppose that $\ell\ge 2$ and assume that~\eqref{eq:rec_Sb_2} holds for all $\ell'< \ell$. 
Let $r\in\{0,\ldots,b^{\ell-1}-1\}$, and let $u$ be a word in $\{0,\ldots,b-1\}^*$ such that $|u|\ge 1$ and $\rep_b(xb^\ell+xb^{\ell-1}+r)=xxu$. 
The proof is divided into two parts according to the first letter of $u$. 
If $u=0u'$ with $u'\in\{0,\ldots,b-1\}^*$, then 
$$
\begin{array}{rclr}
	S_b(xb^\ell+xb^{\ell-1} +r)
	&=& S_b(xb^{\ell-1}+r) + S_b(xb^{\ell-2}+r) & \text{(by Lemma~\ref{lem:arbrexx0})}\\
	&=& S_b(xb^{\ell-2}+r) + S_b(r)) + S_b(xb^{\ell-2}+r) & \text{(using~\eqref{eq:rec_Sb_1})}\\
	&=& 2(S_b(xb^{\ell-2}+r) + S_b(r)) - S_b(r) & \\
	&=& 2 S_b(xb^{\ell-1}+r) - S_b(r), & \text{(using~\eqref{eq:rec_Sb_1})}\\
\end{array}
$$
which proves~\eqref{eq:rec_Sb_2}. 
Now if $u=zu'$ with  $z\in\{1,\ldots,b-1\}$ and $u'\in\{0,\ldots,b-1\}^*$, then~\eqref{eq:rec_Sb_2} directly follows from Definition~\ref{def:S_b} and Lemma~\ref{lem:arbrexxy}.

Let us finally prove~\eqref{eq:rec_Sb_3}. 
If $\ell=1$, then $r=0$ and~\eqref{eq:rec_Sb_1} follows from Table~\ref{tab:initialSb}.  
Now suppose that $\ell\ge 2$ and assume that~\eqref{eq:rec_Sb_3} holds for all $\ell'< \ell$. 
Let $r\in\{0,\ldots,b^{\ell-1}-1\}$, let $z$ be a letter of $\{1,\ldots,b-1\}$ and let $u$ be a word in $\{0,\ldots,b-1\}^*$ such that $\rep_b(xb^\ell+yb^{\ell-1}+r)=xyzu$. 
Using Definition~\ref{def:S_b} and Lemma~\ref{lem:arbrexyz}, we directly have that
$$
S_b(xb^\ell+yb^{\ell-1}+r) = S_b(xb^{\ell-1}+r) + S_b(yb^{\ell-1}+r) - 2 S_b(r),
$$
which proves~\eqref{eq:rec_Sb_3}.
\end{proof}

\section{Regularity of the sequence $(S_b(n))_{n\ge 0}$}\label{sec:breg}

The sequence $(S_2(n))_{n\ge 0}$ is shown to be $2$-regular; see \cite{LRS2}. 
We recall that the {\em $b$-kernel} of a sequence $s=(s(n))_{n\geq 0}$ is the set
$$\mathcal{K}_b(s) = \{ (s(b^in + j))_{n\ge 0} | \; i\ge 0 \text{ and } 0\le j < b^i \}.
$$
A sequence $s=(s(n))_{n\geq 0}\in\mathbb{Z}^\mathbb{N}$ is \emph{$b$-regular} if there exists a finite number of sequences $(t_1(n))_{n\geq 0}$, \ldots, $(t_\ell(n))_{n\geq 0}$ such that every sequence in the $\mathbb{Z}$-module $\langle \mathcal{K}_b(s)  \rangle$ generated by the $b$-kernel $\mathcal{K}_b(s)$ is a $\mathbb{Z}$-linear combination of the $t_r$'s. 
In this section, we prove that the sequence $(S_b(n))_{n\ge 0}$ is $b$-regular.
As a consequence, one can get matrices to compute $S_b(n)$ in a number of matrix multiplications proportional to $\log_b(n)$. To prove the $b$-regularity of the sequence $(S_b(n))_{n\geq 0}$ for any base $b$, we first need a lemma involving some matrix manipulations. 

\begin{lemma}\label{lem:matrices}
Let $I$ and $0$ respectively be the identity matrix of size $b^2 \times b^2$ and the zero matrix of size $b^2 \times b^2$. 
Let $M_b$ be the block-matrix of size $b^3\times b^3$
$$
M_b := 
\left( 
\begin{array}{ccccccc}
I & I & 2 I & \cdots & \cdots & \cdots & 2 I\\
2I & 3I & 3 I & 4 I & \cdots & \cdots & 4 I\\
\vdots & \vdots & 4 I & \ddots & \ddots & & \vdots\\
\vdots & \vdots & \vdots & \ddots & \ddots & \ddots & \vdots\\
\vdots & \vdots & \vdots & & \ddots & \ddots & 4I\\
\vdots & \vdots & \vdots & & & \ddots & 3I \\
2I & 3I & 4 I & \cdots & \cdots & \cdots &4I\\
\end{array}
\right).
$$
This matrix is invertible and its inverse is given by
$$
M^{-1}_b := 
\left( 
\begin{array}{ccccccc}
3I & 2I & \cdots & \cdots & 2 I & -(2b-3) I\\
-2I & 0 & \cdots & \cdots & 0 &  I\\
0 & -I & \ddots & & \vdots & \vdots\\
\vdots & \ddots & \ddots & \ddots & \vdots & \vdots\\
\vdots & & \ddots & \ddots & 0 & \vdots\\
0 & \cdots & \cdots & 0 & -I & I\\
\end{array}
\right).
$$
\end{lemma}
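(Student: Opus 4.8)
The plan is to exploit the fact that every block of both $M_b$ and the proposed inverse is a scalar multiple of the $b^2\times b^2$ identity matrix $I$. Writing $M_b = C\otimes I$ and the candidate inverse as $D\otimes I$, where $C=(c_{ij})$ and $D=(d_{ij})$ are the $b\times b$ integer matrices recording the block coefficients, the mixed-product property of the Kronecker product gives $(C\otimes I)(D\otimes I)=(CD)\otimes I$, and $I_b\otimes I = I_{b^3}$. Hence the entire statement reduces to the purely scalar identity $CD=I_b$ between $b\times b$ matrices; since we may work over $\mathbb{Q}$, a single one-sided check is enough to conclude that $M_b$ is invertible with the stated inverse.

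First I would record the entries explicitly. The matrix $C$ is dense: $c_{11}=c_{12}=1$, $c_{1k}=2$ for $k\ge 3$, while $c_{i1}=2$ and $c_{i2}=3$ for $i\ge 2$, and for $i\ge 2,\ k\ge 3$ one has $c_{ik}=3$ on the superdiagonal $k=i+1$ and $c_{ik}=4$ otherwise. By contrast $D$ is sparse, which is exactly what makes the verification tractable: reading the displayed inverse column by column gives $d_{\cdot 1}=3e_1-2e_2$, then $d_{\cdot k}=2e_1-e_{k+1}$ for $2\le k\le b-1$, and finally $d_{\cdot b}=-(2b-3)e_1+\sum_{j=2}^b e_j$.

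Then I would compute $(CD)_{ik}=\sum_j c_{ij}d_{jk}$ one column-type at a time. For $k=1$ only $j\in\{1,2\}$ contribute, yielding $3c_{i1}-2c_{i2}$, which is $1$ when $i=1$ and $0$ otherwise. For $2\le k\le b-1$ only $j\in\{1,k+1\}$ contribute, yielding $2c_{i1}-c_{i,k+1}$; the value of $c_{i,k+1}$ (namely $2$, $3$, or $4$ according to whether $i=1$, $i=k$, or neither) produces exactly $\delta_{ik}$. The genuinely interesting column is $k=b$, where all of $j=2,\dots,b$ contribute and $(CD)_{ib}=-(2b-3)\,c_{i1}+\sum_{j=2}^b c_{ij}$; here one must evaluate the row sums of $C$, using that each row $i\ge 2$ contains exactly one superdiagonal $3$, and check that the constant $-(2b-3)$ is tuned so that the result collapses to $\delta_{ib}$.

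The only real obstacle is bookkeeping: the first row, the first two columns, and the last column of $C$ all deviate from the generic value $4$, so the cases $i=1$, $i=b$, and $2\le i\le b-1$ must be separated, and the single superdiagonal $3$ in each row must be tracked when forming the row sums for the last column. Once the entry patterns are pinned down, every case is a one-line arithmetic cancellation and no deeper idea is needed; in particular the $(2b-3)$ offset is forced precisely by the requirement that the off-diagonal entries of the last column of $CD$ vanish.
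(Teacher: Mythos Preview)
Your proposal is correct and follows the same approach as the paper, which simply says to ``proceed to the multiplication of the two matrices.'' Your use of the Kronecker product factorisation $M_b=C\otimes I$ to reduce the $b^3\times b^3$ verification to the $b\times b$ identity $CD=I_b$ is a clean organisational device, but the underlying argument---direct multiplication with a case split on rows and columns---is exactly what the paper intends.
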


For the proof of the previous lemma, simply proceed to the multiplication of the two matrices. Using this lemma, we prove that the sequence $(S_b(n))_{n\ge 0}$ is $b$-regular. 

\begin{theorem}\label{thm:b-reg}
For all $r\in\{0,\ldots, b^2-1\}$, 
we have 
\begin{equation}\label{eq:b-reg}
S_b(nb^2 +r) = a_r S_b(n) + \sum_{s=0}^{b-2} c_{r,s} S_b(nb+s) \quad \forall \, n \ge 0,
\end{equation}
where the coefficients $a_r$ and $c_{r,s}$ are unambiguously determined by the first few values $S_b(0)$, $S_b(1)$,\ldots, $S_b(b^3-1)$ and given in Table~\ref{tab:ar}, Table~\ref{tab:cr0} and Table~\ref{tab:crs}.
In particular, the sequence $(S_b(n))_{n\ge 0}$ is $b$-regular. 
Moreover, a choice of generators for $\langle \mathcal{K}_b(s) \rangle$ is given by the $b$ sequences $(S_b(n))_{n\ge 0}$, $(S_b(bn))_{n\ge 0}$, $(S_b(bn+1))_{n\ge 0}$, \ldots, $(S_b(bn+b-2))_{n\ge 0}$.
\end{theorem}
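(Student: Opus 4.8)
The plan is to read the three identities of Proposition~\ref{prop:rel-s} as \emph{second-digit elimination} rules and to show that they are compatible with appending arbitrary digits at the end of a base-$b$ expansion. Concretely, write $\rep_b(n)=a_1a_2\cdots a_L$ with $a_1\neq 0$ and put $w=a_3\cdots a_L$. Then relations \eqref{eq:rec_Sb_1}--\eqref{eq:rec_Sb_3} say exactly that
\[
S_b(\val_b(a_1a_2w)) =
\begin{cases}
S_b(\val_b(a_1w))+S_b(\val_b(w)), & a_2=0,\\
2\,S_b(\val_b(a_1w))-S_b(\val_b(w)), & a_2=a_1,\\
S_b(\val_b(a_1w))+2\,S_b(\val_b(a_2w))-2\,S_b(\val_b(w)), & \text{otherwise,}
\end{cases}
\]
where the integer coefficients depend only on the pair $(a_1,a_2)$ and \textbf{not} on the tail $w$. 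This tail-independence is the engine of the whole argument: the same rule governs $S_b$ at $a_1a_2\cdots a_L$, at $a_1a_2\cdots a_L\,s$ (value $nb+s$) and at $a_1a_2\cdots a_L\,r_1r_0$ (value $nb^2+r$), since for $L\ge 2$ all three words share the first two letters $a_1a_2$.

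Second, I would pin down the coefficients $a_r,c_{r,s}$ by forcing \eqref{eq:b-reg} on the base layer $n\in\{0,\dots,b-1\}$. Letting $r$ run over $\{0,\dots,b^2-1\}$ and stacking, and ordering the $b^3$ unknowns so that $(a_r)_r$ comes first and then each block $(c_{r,s})_r$ for $s=0,\dots,b-2$, one gets a $b^3\times b^3$ linear system whose $(n,\cdot)$ row-block is $\big(S_b(n)I,\,S_b(nb)I,\,S_b(nb+1)I,\dots,S_b(nb+b-2)I\big)$. Reading these values (on words of length $\le 2$) off Table~\ref{tab:initialSb} shows the coefficient matrix is exactly $M_b$. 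By Lemma~\ref{lem:matrices}, $M_b$ is invertible, so the coefficients exist and are \emph{unique}; applying $M_b^{-1}$ to the right-hand side $(S_b(nb^2+r))$ (values on words of length $\le 3$, again from Table~\ref{tab:initialSb}) yields the explicit entries of Tables~\ref{tab:ar},~\ref{tab:cr0} and~\ref{tab:crs}.

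Third, with these coefficients fixed I would prove \eqref{eq:b-reg} for all $n$ by induction on $L=|\rep_b(n)|$, the base $L\le 1$ being the defining system above. For $L\ge 2$, apply to each of the three words $\rep_b(nb^2+r)$, $\rep_b(nb+s)$ and $\rep_b(n)$ the single elimination rule dictated by $(a_1,a_2)$. The words appearing on the right-hand sides are governed by integers whose base-$b$ expansions are strictly shorter than $L$ (deleting $a_2$, or one copy of $a_1$, shortens the leading part), so the induction hypothesis applies to them. Because the elimination rule is the \emph{same} fixed combination in all three slots, substituting the three instances of \eqref{eq:b-reg} and re-collecting via the one elimination identity for $n$ reassembles precisely $a_rS_b(n)+\sum_{s}c_{r,s}S_b(nb+s)$, closing the induction.

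Finally, \eqref{eq:b-reg} gives regularity: iterating it downward writes any kernel sequence $(S_b(b^in+j))_n$ with $i\ge 2$ in terms of those with lower $i$ (take $N=b^{i-2}n+\lfloor j/b^2\rfloor$ and $r=j\bmod b^2$, so $b^in+j=Nb^2+r$), whence $\langle\mathcal{K}_b(S_b)\rangle$ is spanned by the level-$\le 1$ sequences $(S_b(n))_n$ and $(S_b(bn+t))_n$ for $0\le t\le b-1$; this already proves $b$-regularity. To reach the announced $b$ generators I would eliminate $(S_b(bn+b-1))_n$ using the identity $\sum_{t=0}^{b-1}S_b(bn+t)=(2b-1)S_b(n)$, provable by the very same length induction, which rewrites $(S_b(bn+b-1))_n=(2b-1)(S_b(n))_n-\sum_{t=0}^{b-2}(S_b(bn+t))_n$. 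The step I expect to require the most care is the base layer: arranging the $b^3$ equations in the correct blocked order so that their matrix is recognizably $M_b$, and matching the entries feeding $M_b$ and the right-hand side to Table~\ref{tab:initialSb}. Once this identification is in place, Lemma~\ref{lem:matrices} furnishes uniqueness and the explicit coefficients for free, and the tail-independence of the elimination rules renders the inductive step essentially automatic.
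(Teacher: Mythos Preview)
Your proposal is correct and follows essentially the same strategy as the paper: determine the coefficients by the linear system with matrix $M_b$ on the base layer, then induct on the length of $\rep_b(n)$ using the tail-independence of the elimination rules, and finally reduce the generating set by the identity $\sum_{t=0}^{b-1}S_b(bn+t)=(2b-1)S_b(n)$ (which is just a rearrangement of the paper's~\eqref{eq:b-reg-2}). The one genuine streamlining is that you work directly with the three two-letter rules of Proposition~\ref{prop:rel-s}, whereas the paper carries out its inductive step via the five three-letter Lemmas~\ref{lem:arbrex00}--\ref{lem:arbrexyz}; this buys you a smaller base case ($n<b$ instead of $n<b^2$) and so avoids the direct table-check for $b\le n<b^2$ that the paper performs via Tables~\ref{tab:Sbnx0}--\ref{tab:Sbnxy}. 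Otherwise the two arguments are the same in content, and the point you flag as requiring the most care---identifying the blocked coefficient matrix with $M_b$ from Table~\ref{tab:initialSb}---is exactly where the paper spends its effort too.
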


\begin{table}
\centering
$
\begin{array}{|c|c|c|c|c|c|c|c|c|c|c|}
\hline
\rep_b(r) & \varepsilon & x & b-1 & x0 & (b-1)0 & xx & (b-1)(b-1) & xy & (b-1)x & x(b-1) \\
\hline
a_r & -1 & -2 & 2b-3 & -2 & 4b-4 & -1 & 4b-3 & -2 & 4b-4 & 2b-3 \\
\hline
\end{array}
$
\caption{Values of $a_r$ for $0 \leq r < b^2$ with $x,y \in \{1,\dots,b-2\}$ and $x \neq y$.}
\label{tab:ar}
\end{table}

\begin{table}
\centering
$
\begin{array}{|c|c|c|c|c|c|c|c|c|c|c|}
\hline
\rep_b(r) & \varepsilon & x & b-1 & x0 & (b-1)0 & xx & (b-1)(b-1) & xy & (b-1)x & x(b-1) \\
\hline
c_{r,0} & 2 & 2 & 1 & 1 & -1 & 0 & -2 & 0 & -2 & -1 \\
\hline
\end{array}
$
\caption{Values of $c_{r,0}$ for $0 \leq r < b^2$ with $x,y \in \{1,\dots,b-2\}$ and $x \neq y$.}
\label{tab:cr0}
\end{table}

\begin{table}
\centering
\begin{tabular}{|c|c|c|c|c|c|c|c|c|c|}
\hline
$\rep_b(r)$ & $\varepsilon$ & \multicolumn{2}{c|}{$x$} & $b-1$ & \multicolumn{2}{c|}{$x0$} & $(b-1)0$ & \multicolumn{2}{c|}{$xx$} \\
\hline
$s$ & $z$ & $x$ & $z$ & $z$ & $x$ & $z$ & $z$ & $x$ & $z$ \\
\hline
$c_{r,s}$ & $0$ & $1$ & $0$ & $-1$ & $2$ & $0$ & $-2$ & $2$ & $0$ \\
\hline 
\multicolumn{10}{c}{} \\
\hline
$\rep_b(r)$ & $(b-1)(b-1)$ & \multicolumn{3}{c|}{$xy$} & \multicolumn{2}{c|}{$x(b-1)$} & \multicolumn{3}{c|}{$(b-1)x$} \\
\hline
$s$ & $z$ & $x$ & $y$ & $z$ & $x$ & $z$ & $x$ & \multicolumn{2}{c|}{$z$}  \\
\hline
$c_{r,s}$ & $-2$ & $2$ & $1$ & $0$ & $1$ & $-1$ & $-1$ & \multicolumn{2}{c|}{$-2$} \\
\hline
\end{tabular}
\caption{Values of $c_{r,s}$ for $0 \leq r < b^2$ and $1 \leq s \leq b-2$ with $x,y,z \in \{1,\dots,b-2\}$ pairwise distinct.}
\label{tab:crs}
\end{table}

\begin{proof}
We proceed by induction on $n\ge 0$. 
For the base case $n \in \{0,1,\dots,b^2-1\}$, we first compute the coefficients $a_r$ and $c_{r,s}$ using the values of $S_b(nb^2+r)$ for $n\in\{0,\ldots,b-1\}$ and $r \in \{0,\dots,b^2-1\}$. 
Then we show that~\eqref{eq:b-reg} also holds with these coefficients for $n\in\{b,\ldots,b^2-1\}$.

\textbf{Base case.} 
Let $I$ denote the identity matrix of size $b^2\times b^2$. 
The system of $b^3$ equations~\eqref{eq:b-reg} when $n\in\{0,\ldots,b-1\}$ and $r \in \{0,\dots,b^2-1\}$ can be written as $MX=V$ where the matrix $M\in \mathbb{Z}^{b^3}_{b^3}$ is equal to
$$
\left( 
\begin{array}{ccccccc}
S_b(0) I & S_b(0) I & S_b(1) I & S_b(2) I & \cdots & S_b(b-2) I\\
S_b(1) I & S_b(b) I & S_b(b+1) I & S_b(b+2) I & \cdots & S_b(2b-2) I\\
\vdots & \vdots & \vdots & \vdots &  & \vdots\\
S_b(b-1) I & S_b(b(b-1)) I & S_b(b(b-1)+1) I & S_b(b(b-1)+2) I & \cdots & S_b(b(b-1)+b-2) I\\
\end{array}
\right) 
$$
and the vectors $X,V\in \mathbb{Z}^{b^3}$ are respectively given by
\begin{align*}
X^{\mathsf{T}} &= \left( 
\begin{array}{cccccccccccc}
a_0&\cdots&a_{b^2-1}&c_{0,0}&c_{1,0}&\cdots&c_{b^2-1,0}&
\cdots&c_{0,b-2}&c_{1,b-2}&\cdots&c_{b^2-1,b-2}\\
\end{array}
\right), \\
V^\mathsf{T} &= \left( 
\begin{array}{cccc}
S_b(0) & S_b(1) & \cdots &S_b(b^3-1)\\
\end{array}.
\right)
\end{align*}
Observe that in the vector $X$, the coefficients $c_{r,s}$ are first sorted by $s$ then by $r$.
Using Table~\ref{tab:initialSb}, the matrix $M$ is equal to the matrix $M_b$ of Lemma~\ref{lem:matrices}. 
By this lemma, the previous system has a unique solution given by $X=M^{-1}_b V$. 
Consequently, using Lemma~\ref{lem:matrices}, we have, for all $r\in\{0,\ldots, b^2-1\}$ and all $s\in\{1,\ldots,b-2\}$,
\begin{align*}
a_r &= 3S_b(r) + 2 \sum_{j=1}^{b-2} S_b(jb^2+r) - (2b-3) \, S_b((b-1)b^2+r), \\
c_{r,0} &= -2 S_b(r) + S_b((b-1)b^2+r),\\
c_{r,s} &= -S_b(sb^2 + r) + S_b((b-1)b^2+r).
\end{align*}
The values of the coefficients can then be computed using Table~\ref{tab:initialSb} and are stored in Table~\ref{tab:ar}, Table~\ref{tab:cr0} and Table~\ref{tab:crs}.

For $n \in \{b,\dots,b^2-1\}$, the values of $S_b(nb^2+r)$ are given in Table~\ref{tab:Sbnx0}, Table~\ref{tab:Sbnxx} and Table~\ref{tab:Sbnxy} according to whether $\rep_b(n)$ is of the form $x0$, $xx$ or $xy$ with $x \neq y$.
The proof that~\eqref{eq:b-reg} holds for each $n\in\{b,\ldots,b^2-1\}$ only requires easy computations that are left to the reader.

\begin{table}
\centering
$
\begin{array}{|c|c|c|c|c|c|c|c|c|c|c|}
\hline
\rep_b(r) 
& \varepsilon & x & y & x0 & y0 & xx & yy & xy & yx & yz  
\\
\hline
S_b(nb^2+r) 
& 5 & 7 & 8 & 8 & 10 & 7 & 9 & 10 & 11 & 12
\\
\hline
\end{array}
$
\caption{Values of $S_b(nb^2+r)$ for $b \leq n < b^2$ with $\rep_b(n)=x0$ and $x,y,z \in \{1,\dots,b-1\}$ pairwise distinct.}
\label{tab:Sbnx0}
\end{table}

\begin{table}
\centering
$
\begin{array}{|c|c|c|c|c|c|c|c|c|c|c|}
\hline
\rep_b(r) 
& \varepsilon & x & y & x0 & y0 & xx & yy & xy & yx & yz  
\\
\hline
S_b(nb^2+r) 
& 7 & 8 & 10 & 7 & 11 & 5 & 9 & 8 & 10 & 12
\\
\hline
\end{array}
$
\caption{Values of $S_b(nb^2+r)$ for $b \leq n < b^2$ with $\rep_b(n)=xx$ and $x,y,z \in \{1,\dots,b-1\}$ pairwise distinct.}
\label{tab:Sbnxx}
\end{table}

\begin{table}
\centering
$
\begin{array}{|c|c|c|c|c|c|c|c|c|c|c|c|c|c|c|c|c|c|}
\hline
\rep_b(r) 
& \varepsilon & x & y & z & x0 & y0 & z0 & xx & yy & zz & xy & xz & yx & yz & zx & zy & zt 
\\
\hline
S_b(nb^2+r) 
& 10 & 13 & 12 & 14 & 13 & 11 & 15 & 10 & 8 & 12 & 12 & 14 & 11 & 12 & 15 & 14 & 16 
\\
\hline
\end{array}
$
\caption{Values of $S_b(nb^2+r)$ for $b \leq n < b^2$ with $\rep_b(n)=xy$ and $x,y,z,t \in \{1,\dots,b-1\}$ pairwise distinct.}
\label{tab:Sbnxy}
\end{table}

\textbf{Inductive step.} 
Consider $n\ge b^2$ and suppose that the relation~\eqref{eq:b-reg} holds for all $m < n$. 
Then $|\rep_b(n)|\ge 3$. 
Like for the base case, we need to consider several cases according to the form of the base-$b$ expansion of $n$.
More precisely, we need to consider the following five forms, where $u \in \{0,\dots,b-1\}^*$, $x,y,z \in \{1,\dots,b-1\}$, $x \neq z$, and $t \in \{0,\dots,b-1\}$:
\[
	x00u \quad \text{or} \quad
	xx0u \quad \text{or} \quad
	x0yu \quad \text{or} \quad
	xxyu \quad \text{or} \quad
	xztu. 
\]

Let us focus on the first form of $\rep_b(n)$ since the same reasoning can be applied for the other ones. 
Assume that $\rep_b(n)=x00u$ where $x\in\{1,\ldots,b-1\}$ and $u\in\{0,\ldots,b-1\}^*$. 
For all $r\in\{0,\ldots,b^2-1\}$, there exist $r_1,r_2\in\{0,\ldots,b-1\}$ such that $\val_b(r_1r_2)=r$. We have
$$
\begin{array}{rclr}
	S_b(nb^2 +r)
	&=& S_b(\val_b(x00ur_1r_2)) & \\
	&=& 2 S_b(\val_b(x0ur_1r_2)) - S_b(\val_b(xur_1r_2)) & \text{(by Lemma~\ref{lem:arbrex00})} \\
	&=& a_r \, 2 S_b(\val_b(x0u)) + \sum_{s=0}^{b-2} c_{r,s} \, 2 S_b(\val_b(x0us)) &  \\
	&&- a_r S_b(\val_b(xu)) - \sum_{s=0}^{b-2} c_{r,s} S_b(\val_b(xus)) &  \text{(by induction hypothesis)}\\
	&=& a_r S_b(\val_b(x00u)) +  \sum_{s=0}^{b-2} c_{r,s} S_b(\val_b(x00us)) & \text{(by Lemma~\ref{lem:arbrex00})} \\
	&=& a_r S_b(n) +  \sum_{s=0}^{b-2} c_{r,s} S_b(nb+s), & \text{(by Lemma~\ref{lem:arbrex00})}
\end{array}
$$
which proves~\eqref{eq:b-reg}.

\textbf{$b$-regularity.} From the first part of the proof, we directly deduce that the $\mathbb{Z}$-module $\left\langle \mathcal{K}_b(S_b)\right\rangle$ is generated by the $(b+1)$ sequences 
$$(S_b(n))_{n\ge 0}, (S_b(bn))_{n\ge 0}, (S_b(bn+1))_{n\ge 0}, \ldots, (S_b(bn+b-1))_{n\ge 0}.$$
We now show that we can reduce the number of generators. 
To that aim, we prove that 
\begin{equation}\label{eq:b-reg-2}
S_b(nb + b-1) = (2b-1) S_b(n) - \sum_{s=0}^{b-2} S_b(nb+s) \quad \forall \, n \ge 0.
\end{equation}
We proceed by induction on $n\ge 0$. 
As a base case, the proof that~\eqref{eq:b-reg-2} holds for each $n\in\{b,\ldots,b^2-1\}$ only requires easy computations that are left to the reader (using Table~\ref{tab:initialSb}).
Now consider $n\ge b^2$ and suppose that the relation~\eqref{eq:b-reg-2} holds for all $m < n$. 
Then $|\rep_b(n)|\ge 3$. 
Mimicking the first induction step of this proof, we need to consider several cases according to the form of the base-$b$ expansion of $n$.
More precisely, we need to consider the following five forms, where $u \in \{0,\dots,b-1\}^*$, $x,y,z \in \{1,\dots,b-1\}$, $x \neq z$, and $t \in \{0,\dots,b-1\}$:
\[
	x00u \quad \text{or} \quad
	xx0u \quad \text{or} \quad
	x0yu \quad \text{or} \quad
	xxyu \quad \text{or} \quad
	xztu. 
\]
Let us focus on the first form of $\rep_b(n)$ since the same reasoning can be applied for the other ones. 
Assume that $\rep_b(n)=x00u$ where $x\in\{1,\ldots,b-1\}$ and $u\in\{0,\ldots,b-1\}^*$. 
We have
$$
\begin{array}{rclr}
	S_b(nb + b-1)
	&=& S_b(\val_b(x00u(b-1))) & \\
	&=& 2 S_b(\val_b(x0u(b-1))) - S_b(\val_b(xu(b-1))) & \text{(by Lemma~\ref{lem:arbrex00})} \\
	&=& (2b-1) \, 2 S_b(\val_b(x0u)) - \sum_{s=0}^{b-2} \, 2 S_b(\val_b(x0us)) &  \\
	&&- (2b-1) S_b(\val_b(xu)) + \sum_{s=0}^{b-2} S_b(\val_b(xus)) &  \text{(by induction hypothesis)}\\
	&=& (2b-1) S_b(\val_b(x00u)) -  \sum_{s=0}^{b-2} S_b(\val_b(x00us)) & \text{(by Lemma~\ref{lem:arbrex00})} \\
	&=& (2b-1) S_b(n) -  \sum_{s=0}^{b-2} S_b(nb+s), & \text{(by Lemma~\ref{lem:arbrex00})}
\end{array}
$$
which proves~\eqref{eq:b-reg}.

The $\mathbb{Z}$-module $\left\langle \mathcal{K}_b(S_b)\right\rangle$ is thus generated by the $b$ sequences 
$$(S_b(n))_{n\ge 0}, (S_b(bn))_{n\ge 0}, (S_b(bn+1))_{n\ge 0}, \ldots, (S_b(bn+b-2))_{n\ge 0}.$$
\end{proof}

\begin{example}\label{ex:rel-base2}
Let $b=2$. Using Table~\ref{tab:ar}, Table~\ref{tab:cr0} and Table~\ref{tab:crs}, we find that $a_0=-1$, $a_1=1$, $a_2=4$, $a_3=5$, $c_{0,0}=2$, $c_{1,0}=1$, $c_{2,0}=-1$ and $c_{3,0}=-2$. 
In this case, there are no $c_{r,s}$ with $s>0$. 
Applying Theorem~\ref{thm:b-reg} and from~\eqref{eq:b-reg-2}, we get 
\begin{align*}
S_2(2n+1) &= 3S_2(n) - S_2(2n), \\
S_2(4n) &= -S_2(n) + 2 S_2(2n), \\
S_2(4n+1) &= S_2(n) + S_2(2n), \\
S_2(4n+2) &= 4S_2(n) - S_2(2n), \\
S_2(4n+3) &= 5S_2(n) -2 S_2(2n) 
\end{align*} 
for all $n\ge 0$. 
This result is a rewriting of {\cite[Theorem~21]{LRS2}}. 
Observe that the third and the fifth identities are redundant: they follow from the other ones. 
\end{example}

\begin{example}\label{ex:rel-base3}
Let $b=3$. Using Table~\ref{tab:ar}, Table~\ref{tab:cr0} and Table~\ref{tab:crs}, the values of the coefficients $a_r$, $c_{r,0}$ and $c_{r,1}$ can be found in Table~\ref{tab:val_coeff_base3}.
\begin{table}[h]
$$
\begin{array}{|c|c|c|c|c|c|c|c|c|c|}
\hline 
r & 0 & 1 & 2 & 3 & 4 & 5 & 6 & 7 & 8 \\
\hline
a_r & -1  & -2 & 3 & -2  & -1 & 3 & 8 & 8 & 9 \\
c_{r,0} & 2 & 2 & 1 & 1 & 0 & -1 & -1 & -2 & -2 \\
c_{r,1} & 0 & 1 & -1 & 2 & 2 & 1 & -2 & -1 & -2 \\
\hline
\end{array}
$$
\caption{The values of $a_r, c_{r,0}, c_{r,1}$ when $b=3$ and $r\in\{0,\ldots,8\}$.}
\label{tab:val_coeff_base3}
\end{table}
Applying Theorem~\ref{thm:b-reg} and from~\eqref{eq:b-reg-2}, we get 
\begin{align*}
S_3(3n+2) &= 5S_3(n) - S_3(3n) - S_3(3n+1), \\
S_3(9n) &= -S_3(n) + 2 S_3(3n), \\
S_3(9n+1) &= -2S_3(n) + 2 S_3(3n) + S_3(3n+1), \\
S_3(9n+2) &= 3S_3(n) + S_3(3n) - S_3(3n+1), \\
S_3(9n+3) &= -2S_3(n) + S_3(3n) + 2S_3(3n+1), \\
S_3(9n+4) &= -S_3(n) + 2 S_3(3n+1), \\
S_3(9n+5) &= 3S_3(n) - S_3(3n) +S_3(3n+1), \\
S_3(9n+6) &= 8S_3(n) - S_3(3n) -2S_3(3n+1), \\
S_3(9n+7) &= 8S_3(n) - 2S_3(3n) -S_3(3n+1), \\
S_3(9n+8) &= 9S_3(n) - 2S_3(3n) -2 S_3(3n+1)
\end{align*} 
for all $n\ge 0$. 
This result is a proof of {\cite[Conjecture~26]{LRS2}}.
Observe that the fourth, the seventh and the tenth identities are redundant.
\end{example}

\begin{remark}\label{rem:nombre-relations}
Combining~\eqref{eq:b-reg} and~\eqref{eq:b-reg-2} yield $b^2+1$ identities to generate the $\mathbb{Z}$-module $\left\langle \mathcal{K}_b(S_b)\right\rangle$. However, as illustrated in Example~\ref{ex:rel-base2} and Example~\ref{ex:rel-base3}, only $b^2-b+1$ identities are useful: the relations established for the sequences $(S_b(b^2n+br+b-1))_{n\ge 0}$, with $r\in\{0,\ldots,b-1\}$, can be deduced from the other identities. 
\end{remark}

\begin{remark}\label{rem:rep-lin}
Using Theorem~\ref{thm:b-reg} and~\eqref{eq:b-reg-2} and the set of $b$ generators of the $\mathbb{Z}$-module $\left\langle \mathcal{K}_b(S_b)\right\rangle$ being
$$
\{ (S_b(n))_{n\ge 0}, (S_b(bn))_{n\ge 0}, (S_b(bn+1))_{n\ge 0}, \ldots, (S_b(bn+b-2))_{n\ge 0} \},
$$ 
we get matrices to compute $S_b(n)$ in a number of steps proportional to $\log_b(n)$. 
For all $n\ge 0$, let 
$$
V_b(n) = \left(
\begin{array}{c}
 S_b(n) \\
 S_b(bn)\\
 S_b(bn+1)\\
 \vdots\\
 S_b(bn+b-2) 
\end{array}
\right) \in \mathbb{Z}^{b} .
$$
Consider the matrix-valued morphism $\mu_b : \{0,1,\ldots,b-1\}^* \to \mathbb{Z}^{b}_{b}$ defined, for all $s\in\{0,\ldots,b-2\}$, by
$$
\mu_b(s)=
\left(\begin{array}{cccccccc}
 0 & 0 & \cdots & 0 & 1 & 0 & \cdots & 0 
 \\ 
 a_{bs} & c_{bs,0} & \cdots & c_{bs,s-1} & c_{bs,s} & c_{bs,s+1} & \cdots & c_{bs,b-2}
 \\
a_{bs+1} & c_{bs+1,0} & \cdots & c_{bs+1,s-1} & c_{bs+1,s} & c_{bs+1,s+1} & \cdots & c_{bs+1,b-2}
\\ 
\vdots & \vdots &  & \vdots & \vdots & \vdots &  & \vdots 
\\
a_{bs+b-2} & c_{bs+b-2,0} & \cdots & c_{bs+b-2,s-1} & c_{bs+b-2,s} & c_{bs+b-2,s+1} & \cdots & c_{bs+b-2,b-2} \\
\end{array}\right)
$$ 
and 
$$
\mu_b(b-1)=
\left(\begin{array}{ccccc}
 (2b-1) & -1 & -1 & \cdots & -1 \\ 
 a_{b(b-1)} & c_{b(b-1),0} & c_{b(b-1),1} & \cdots & c_{b(b-1),b-2} \\
  a_{b(b-1)+1} & c_{b(b-1)+1,0} & c_{b(b-1)+1,1} & \cdots & c_{b(b-1)+1,b-2} \\
  \vdots & \vdots & \vdots & & \vdots \\
    a_{b(b-1)+b-2} & c_{b(b-1)+b-2,0} & c_{b(b-1)+b-2,1} & \cdots & c_{b(b-1)+b-2,b-2} \\
\end{array}\right).
$$ 
Observe that the number of generators explains the size of the matrices above. 
For each $s\in\{0,\ldots,b-2\}$, exactly $b-1$ identities from Theorem~\ref{thm:b-reg} are used to define the matrix $\mu_b(s)$. If $s,s'\in\{0,\ldots,b-2\}$ are such that $s\ne s'$, then the relations used to define the matrices $\mu_b(s)$ and $\mu_b(s')$ are pairwise distinct. Finally, the first row of the matrix $\mu_b(b-1)$ is~\eqref{eq:b-reg-2} and the other rows are $b-1$ identities from Theorem~\ref{thm:b-reg}, which are distinct from the previous relations. Consequently, $(b-1)(b-1)+b$ identities are used, which corroborates Remark~\ref{rem:nombre-relations}.

Using the definition of the morphism $\mu$, we can show that $V_b(bn+s)=\mu_b(s)V_b(n)$ for all $s\in\{0,\ldots,b-1\}$ and $n\ge 0$. Consequently, if $\rep_b(n)= n_k \cdots n_0$, then 
$$
S_b(n) =
\begin{pmatrix}
    1&0&\cdots&0\\
\end{pmatrix}
\, \mu_b(n_0)\cdots \mu_b(n_k)\, V_b(0).
$$

For example, when $b=2$, the matrices $\mu_2(0)$ and $\mu_2(1)$ are those given in {\cite[Corollary~22]{LRS2}}.
When $b=3$, we get 
$$\mu_3(0)=\left(\begin{array}{ccc}
 0 & 1 & 0 \\ 
 -1 & 2 & 0 \\
 -2 & 2 & 1\\
\end{array}\right),\quad
\mu_3(1)=\left(
\begin{array}{ccc}
 0 & 0 & 1 \\
 -2 & 1 & 2 \\
 -1 & 0 & 2 \\
\end{array}
\right), \quad
\mu_3(2)=\left(
\begin{array}{ccc}
5 & -1 & -1 \\
8 & -1 & -2\\
8 & -2 & -1\\
\end{array}
\right).$$
\end{remark}

The class of $b$-synchronized sequences is intermediate between the classes of $b$-automatic sequences and $b$-regular sequences. 
These sequences were first introduced in \cite{CM}.

\begin{prop}\label{pro:pasb-syn}
The sequence $(S_b(n))_{n\ge 0}$ is not $b$-synchronized.
\end{prop}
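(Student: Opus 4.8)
The plan is to argue by contradiction, using that a $b$-synchronized sequence has its graph recognized by a finite automaton reading the base-$b$ expansions of $n$ and $S_b(n)$ synchronously, after padding the shorter expansion on the left with zeroes to the common length. The contradiction will come from the single transparent subsequence of arguments given by the powers of $b$. First I would record the value of $S_b$ there: since $\rep_b(b^k)=1\,0^k$, the only words of $L_b$ occurring as subwords are $\varepsilon$ and $1 0^j$ for $0\le j\le k$, because a nonempty element of $L_b$ must begin with a nonzero letter and the only nonzero letter available is the leading $1$. Hence
$$ S_b(b^k)=k+2 \qquad (k\ge 0), $$
so $S_b$ takes the merely logarithmic value $k+2$ at the argument $b^k$, whose expansion has length $k+1$.

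Next, suppose for contradiction that $(S_b(n))_{n\ge 0}$ is $b$-synchronized, and let $\mathcal{A}$ be a finite automaton with $N$ states recognizing the set of padded pairs $\bigl(\rep_b(n),\rep_b(S_b(n))\bigr)$. Fix a large $k$ and run $\mathcal{A}$ on the pair associated with $n=b^k$. The first track reads $1\,0^k$, while the second track reads $\rep_b(k+2)$ right-aligned and padded on the left with zeroes up to length $k+1$. As $|\rep_b(k+2)|=O(\log k)$, both tracks simultaneously carry the letter $0$ on a block of length at least $k-O(\log k)$, which exceeds $N$ for $k$ large. Therefore some state of $\mathcal{A}$ repeats inside this block, producing a loop labelled by a factor $(0,0)^p$ with $1\le p\le N$ lying entirely within the double-zero block.

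Pumping this loop down removes $p$ letters $(0,0)$ and yields two tracks of common length $k+1-p$: the first becomes $1\,0^{k-p}$, the expansion of $b^{k-p}$, while the second loses only $p$ of its leading zeroes and still encodes the value $k+2$. Thus the shortened word is exactly the padded encoding of the pair $(b^{k-p},k+2)$, and it is accepted by $\mathcal{A}$, forcing $S_b(b^{k-p})=k+2$. This contradicts the value computed above, namely $S_b(b^{k-p})=(k-p)+2<k+2$. Hence no such automaton exists and $(S_b(n))_{n\ge 0}$ is not $b$-synchronized.

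I expect the only delicate point to be the bookkeeping of the padding and alignment convention, so that the pumped loop sits strictly inside the block where both tracks read $0$; this guarantees that pumping scales $n$ by the factor $b^{-p}$ while leaving the encoded value of $S_b$ formally unchanged. Conceptually the argument merely expresses that a $b$-synchronized sequence cannot grow only logarithmically along an infinite set of arguments whose expansions have linearly growing length, a phenomenon that could alternatively be excluded through the growth estimates for synchronized sequences established in \cite{CM}.
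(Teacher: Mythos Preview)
Your argument is correct. The paper itself gives no self-contained proof here, simply citing \cite[Proposition~24]{LRS2} for the base-$2$ case; the obstruction exploited there is precisely the logarithmic growth $S_2(2^k)=k+2$, so your explicit pumping argument on the pairs $(b^k,k+2)$ is the natural generalization and almost certainly what the reference intends.
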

\begin{proof}
The proof is exactly the same as {\cite[Proposition~24]{LRS2}}.
\end{proof}

To conclude this section, the following result proves that the sequence $(S_b(n))_{n\ge 0}$ has a partial palindromic structure as the sequence $(S_2(n))_{n\ge 0}$; see \cite{LRS2}. For instance, the sequence $(S_3(n))_{n\ge 0}$ is depicted in Figure~\ref{fig:palindromie3} inside the interval $[2\cdot 3^4,3^5]$. 
\begin{figure}[h!tb]
    \centering
\includegraphics[scale=0.5]{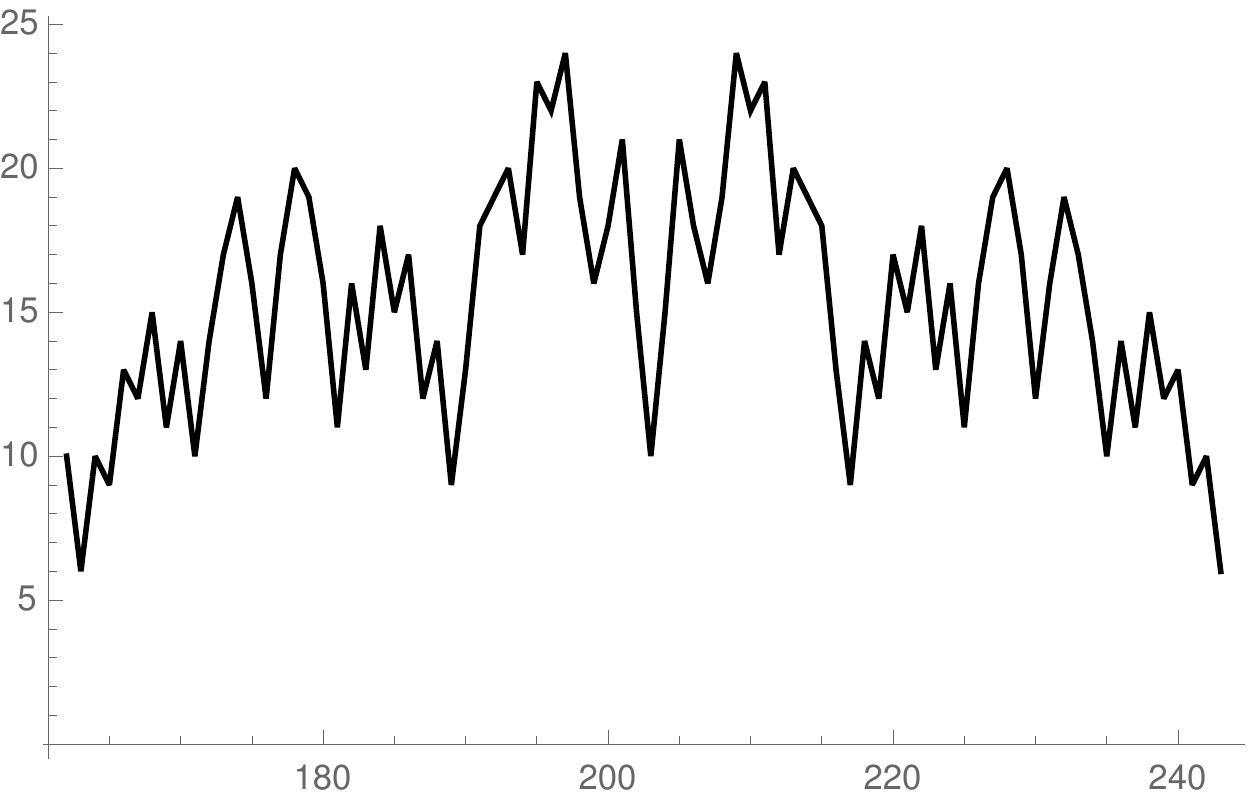}
    \caption{The sequence $(S_3(n))_{n\ge 0}$ inside the interval $[2\cdot 3^4,3^5]$.}
    \label{fig:palindromie3}
\end{figure}

\begin{prop}\label{pro:bpalindrome} 
Let $u$ be a word in $\{0,1,\ldots,b-1\}^*$. 
Define $\bar{u}$ by replacing in $u$ every letter $a\in \{0,1,\ldots,b-1\}$ by the letter $(b-1)-a\in \{0,1,\ldots,b-1\}$. 
Then
$$
\#\left\{ v\in L_b \mid \binom{(b-1)u}{v} > 0 \right\} = \#\left\{ v\in L_b \mid \binom{(b-1)\bar{u}}{v} > 0 \right\}.
$$
In particular, there exists a palindromic substructure inside of the sequence $(S_b(n))_{n\ge 0}$, i.e., for all $\ell\ge 1$ and $0\le r < b^\ell$,
$$
S_b((b-1)\cdot b^{\ell}+r) = S_b((b-1)\cdot b^{\ell}+b^{\ell}-r-1).  
$$
\end{prop}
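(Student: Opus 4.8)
The plan is to first reduce the displayed ``palindromic'' identity to the first (word-theoretic) equality, and then to prove that equality by a complement-symmetry argument on scattered subwords.

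For the reduction, fix $\ell\ge 1$ and $0\le r<b^\ell$, and let $u=r_1\cdots r_\ell\in\{0,\ldots,b-1\}^\ell$ be the length-$\ell$ base-$b$ expansion of $r$ (padded with leading zeros). Since $(b-1)b^\ell\le (b-1)b^\ell+r<b^{\ell+1}$, the leading digit is $b-1$ and $\rep_b((b-1)b^\ell+r)=(b-1)u$. As $b^\ell-1$ has expansion $(b-1)\cdots(b-1)$, subtracting $r$ digitwise produces no borrow, so the length-$\ell$ expansion of $b^\ell-r-1$ is $\bar u=\overline{r_1}\cdots\overline{r_\ell}$ and $\rep_b((b-1)b^\ell+b^\ell-r-1)=(b-1)\bar u$. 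By Definition~\ref{def:S_b}, the two sides of the ``in particular'' identity are then exactly $\#\{v\in L_b\mid\binom{(b-1)u}{v}>0\}$ and $\#\{v\in L_b\mid\binom{(b-1)\bar u}{v}>0\}$, so this part follows at once from the first equality applied to this $u$.

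To prove the first equality, I would attach to every word $w$ three counts: the number $T(w)$ of distinct scattered subwords of $w$ (including $\varepsilon$), the number $Z(w)$ of nonempty scattered subwords beginning with $0$, and the number $Y(w)$ of nonempty scattered subwords beginning with $b-1$. Directly from the definition of $L_b$ one has $\#\{v\in L_b\mid\binom{w}{v}>0\}=T(w)-Z(w)$. I would then record the following elementary facts. First, the complement map $\phi\colon a\mapsto (b-1)-a$, extended letterwise, is a length-preserving involution sending scattered subwords of $w$ bijectively to scattered subwords of $\bar w$ (the embedding positions are unchanged); hence $T(\bar w)=T(w)$, and since $\phi$ exchanges the leading letters $0$ and $b-1$, also $Z(\bar w)=Y(w)$ and $Y(\bar w)=Z(w)$. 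Second, prepending the nonzero letter $b-1$ gives $T((b-1)w)=2T(w)-Y(w)$, and since the leading letter is nonzero it creates no new $0$-initial subwords, so $Z((b-1)w)=Z(w)$.

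Combining these, $\#\{v\in L_b\mid\binom{(b-1)u}{v}>0\}=T((b-1)u)-Z((b-1)u)=2T(u)-Y(u)-Z(u)$, an expression symmetric in $Y$ and $Z$. Replacing $u$ by $\bar u$ leaves $T(u)$ unchanged while swapping $Y(u)$ and $Z(u)$, so the same value is obtained for $(b-1)\bar u$, which is the desired equality. The only step needing real care is the prepending identity $T((b-1)w)=2T(w)-Y(w)$: I would show that the distinct subwords of $(b-1)w$ are exactly the subwords of $w$ together with the words $(b-1)v'$ for $v'$ a subword of $w$, and that the overlap between these two families is precisely the set of subwords of $w$ that already begin with $b-1$, counted by $Y(w)$. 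The rest is bookkeeping.
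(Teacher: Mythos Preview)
Your argument is correct. The paper's own proof is more structural: it observes that the complement map $a\mapsto (b-1)-a$ induces an isomorphism between the tries $\mathcal{T}((b-1)u)$ and $\mathcal{T}((b-1)\bar u)$ (the node $(b-1)x$ corresponds to $(b-1)\bar x$, and the node $ax$ with $a\in\{1,\dots,b-2\}$ corresponds to $((b-1)-a)\bar x$), and then invokes Remark~\ref{rem:numberofnodes} to conclude that both tries have the same number of nodes. You bypass the trie machinery entirely and work directly on the set of scattered subwords: the complement bijection gives $T(\bar u)=T(u)$ and swaps $Y$ with $Z$, while the inclusion--exclusion identity $T((b-1)w)=2T(w)-Y(w)$ together with $Z((b-1)w)=Z(w)$ yields the symmetric expression $2T(u)-Y(u)-Z(u)$ for the $L_b$-count. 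Both proofs rest on the same complement symmetry; the paper's version is a one-line bijective picture relying on the trie set up in Definition~\ref{def:trie}, whereas yours is self-contained and slightly more computational but requires no auxiliary data structure. Your reduction of the ``in particular'' identity to the word equality via the digitwise complement of $r$ against $b^\ell-1$ is also exactly what the paper does, just stated in reverse order.
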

\begin{proof}
The trees $\mathcal{T}((b-1)u)$ and $\mathcal{T}((b-1)\bar{u})$ are isomorphic. 
Indeed, on the one hand, each node of the form $(b-1)x$ in the first tree corresponds to the node $(b-1)\bar{x}$ in the second one and conversely. 
On the other hand, if there exist letters $a\in\{1,\ldots,b-2\}$ in the word $(b-1)u$, the position of the first letter $a$ in the word $(b-1)u$ is equal to the position of the first letter $(b-1)-a$ in the word $(b-1)\bar{u}$ and conversely. Consequently, the node of the form $ax$ in the first tree corresponds to the node of the form $((b-1)-a)\bar{x}$ in the second tree and conversely. 

For the special case, note that for every word $z$ of length $\ell$, there exists $r\in\{0,\ldots,b^\ell-1\}$ such that $\rep_b((b-1)\cdot b^\ell +r)=(b-1)z$ and
$$\val_b(\bar{z})=b^\ell-1-r \in\{0\ldots,b^\ell-1\}.$$
Hence, $(b-1)\bar{z}=\rep_b((b-1)\cdot b^\ell+b^\ell-1-r)$. 
Using~\eqref{def:S_b}, we obtain the desired result.
\end{proof}

\section{Asymptotics of the summatory function $(A_b(n))_{n\ge 0}$}\label{sec:summatory}

In this section, we consider the summatory function $(A_b(n))_{n\ge 0}$ of the sequence $(S_b(n))_{n\ge 0}$; see Definition~\ref{def:S_b}. 
The aim of this section is to apply the method introduced in \cite{LRS3} to obtain the asymptotic behavior of $(A_b(n))_{n\ge 0}$. 
As an easy consequence of the $b$-regularity of $(S_b(n))_{n\ge 0}$, we have the following result. 

\begin{prop}
For all $b\ge 2$, the sequence $(A_b(n))_{n\ge 0}$  is $b$-regular.
\end{prop}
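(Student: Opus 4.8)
The plan is to deduce the $b$-regularity of $(A_b(n))_{n\ge 0}$ from the already-established $b$-regularity of $(S_b(n))_{n\ge 0}$ (Theorem~\ref{thm:b-reg}) by invoking the general closure property that the summatory function of a $b$-regular sequence is again $b$-regular. The cleanest route is to exhibit an explicit linear representation for $(A_b(n))_{n\ge 0}$ built from the linear representation of $(S_b(n))_{n\ge 0}$ given in Remark~\ref{rem:rep-lin}. Recall from that remark that we have vectors $V_b(n)\in\mathbb{Z}^b$ and matrices $\mu_b(0),\ldots,\mu_b(b-1)$ with $V_b(bn+s)=\mu_b(s)V_b(n)$ for all $s\in\{0,\ldots,b-1\}$, and $S_b(n)=\lambda\,\mu_b(n_0)\cdots\mu_b(n_k)\,V_b(0)$ where $\lambda=(1,0,\ldots,0)$ and $\rep_b(n)=n_k\cdots n_0$.

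The key step is to augment this representation to track the running sum. First I would define a companion sequence of vectors $W_b(n)\in\mathbb{Z}^{b+1}$ by stacking $A_b(n)$ on top of $V_b(n)$, i.e.\ $W_b(n)=(A_b(n),\,V_b(n))^{\mathsf T}$. The identity $A_b(bn+s)=A_b(bn)+\sum_{j=0}^{s-1}S_b(bn+j)$ together with $A_b(bn)=A_b(b(n-1)+b-1)+S_b(bn-1)$ lets one express $A_b$ at scaled arguments in terms of $A_b(n)$ and a linear combination of the entries of $V_b(n)$. Concretely, since each $S_b(bn+j)$ for $0\le j\le b-2$ is one of the coordinates of $V_b(n)$ and $S_b(bn+b-1)=(2b-1)S_b(n)-\sum_{s=0}^{b-2}S_b(bn+s)$ by~\eqref{eq:b-reg-2}, every partial sum $\sum_{j=0}^{s-1}S_b(bn+j)$ is a fixed integer linear form in the coordinates of $V_b(n)$. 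Hence there are $(b+1)\times(b+1)$ integer matrices $\nu_b(0),\ldots,\nu_b(b-1)$, each of block-triangular shape
\[
\nu_b(s)=\begin{pmatrix} 1 & \ast \\ 0 & \mu_b(s)\end{pmatrix},
\]
such that $W_b(bn+s)=\nu_b(s)W_b(n)$ for all $s$ and all $n\ge 0$, where the top-right block records the appropriate partial-sum linear form. The bookkeeping for the carry term $A_b(bn)$ versus $A_b(b(n-1)+\cdots)$ must be handled carefully so that the recursion is genuinely a clean digit-reading, and this accounting is the step I expect to require the most care.

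With these matrices in hand, $A_b(n)=\lambda'\,\nu_b(n_0)\cdots\nu_b(n_k)\,W_b(0)$ for a suitable projection row vector $\lambda'=(1,0,\ldots,0)$, which is exactly a linear representation of $(A_b(n))_{n\ge 0}$; the existence of a linear representation is equivalent to $b$-regularity in the sense of Allouche and Shallit. Equivalently, and perhaps more transparently for the write-up, I would simply cite the standard theorem that if $(s(n))_{n\ge 0}$ is $b$-regular then its summatory function $(\sum_{j<n}s(j))_{n\ge 0}$ is $b$-regular, and note that Theorem~\ref{thm:b-reg} supplies the hypothesis. The main obstacle is not conceptual but organizational: verifying that the partial-sum linear forms fit consistently into the digit-by-digit matrix product, i.e.\ checking that the top-right blocks of the $\nu_b(s)$ compose correctly so that reading $\rep_b(n)$ from most significant to least significant digit reproduces $A_b(n)$ exactly, including the base value $W_b(0)$ given by $A_b(0)=0$ and the initial entries of $V_b(0)$.
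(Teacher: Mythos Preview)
Your second route---citing the closure of $b$-regular sequences under summation and invoking Theorem~\ref{thm:b-reg}---is exactly what the paper does (it cites \cite[Theorem~16.4.1]{AS03}), so the proposal is correct on that score.

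Your first route, however, contains a concrete error worth flagging if you ever want the explicit representation. The block shape
\[
\nu_b(s)=\begin{pmatrix}1&\ast\\0&\mu_b(s)\end{pmatrix}
\]
cannot be right: a top-left entry equal to $1$ would force $A_b(bn+s)=A_b(n)+(\text{linear form in }V_b(n))$, whereas in fact $A_b(bn)=(2b-1)A_b(n)$. This identity follows immediately from \eqref{eq:b-reg-2} by summing $\sum_{j=0}^{b-1}S_b(bm+j)=(2b-1)S_b(m)$ over $0\le m<n$ (it is recorded later as Corollary~\ref{cor:mult}). Your attempted bookkeeping via $A_b(bn)=A_b(b(n-1)+b-1)+S_b(bn-1)$ does not close the loop, since it only relates $A_b(bn)$ to $A_b(bn-1)$, not to $A_b(n)$. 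With the corrected top-left entry $2b-1$, the $(b+1)\times(b+1)$ construction does go through cleanly and even improves on the generic $2b\times 2b$ representation from \cite{Dumas13} that the paper mentions.
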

\begin{proof}
This is a direct consequence of Theorem~\ref{thm:b-reg} and of the fact that the summatory function of a $b$-regular sequence is also $b$-regular; see \cite[Theorem~16.4.1]{AS03}.
\end{proof}

From a linear representation with matrices of size $d\times d$ associated with a $b$-regular sequence, one can derive a linear representation with matrices of size $2d \times 2d$ associated with its summatory function; see \cite[Lemma 1]{Dumas13}. 
Consequently, using Remark~\ref{rem:rep-lin}, one can obtain a linear representation with matrices of size $2b\times 2b$ for the summatory function $(A_b(n))_{n\ge 0}$. 
The goal is to decompose $(A_b(n))_{n\ge 0}$ into linear combinations of powers of $(2b-1)$. 
 We need the following two lemmas.

\begin{lemma}\label{lem:sommatoire_aux_puissances}
For all $\ell\ge 0$ and all $x\in\{1,\ldots,b-1\}$, we have
$$
A_b(xb^\ell) = (2x-1) \cdot (2b-1)^\ell.
$$
\end{lemma}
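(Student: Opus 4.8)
The plan is to prove the stronger, index-independent identity $A_b(nb)=(2b-1)A_b(n)$ for all $n\ge 0$, and then to obtain the stated formula by a one-line induction on $\ell$. For the base case $\ell=0$, the quantity $A_b(x)=\sum_{j=0}^{x-1}S_b(j)$ is computed directly from Definition~\ref{def:S_b}: since $x\le b-1$, Proposition~\ref{prop:rel-s} gives $S_b(0)=1$ and $S_b(1)=\cdots=S_b(x-1)=2$, so that $A_b(x)=1+2(x-1)=2x-1=(2x-1)(2b-1)^0$, as desired.

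First I would establish the block identity $A_b(nb)=(2b-1)A_b(n)$. The idea is to slice the interval $\{0,1,\ldots,nb-1\}$ into the $n$ consecutive blocks $\{mb,mb+1,\ldots,mb+b-1\}$ for $m\in\{0,\ldots,n-1\}$. Writing each $j<nb$ uniquely as $j=mb+s$ with $s\in\{0,\ldots,b-1\}$, the defining sum for $A_b$ factors as $A_b(nb)=\sum_{m=0}^{n-1}\sum_{s=0}^{b-1}S_b(mb+s)$. The crucial point is that the inner sum collapses: rearranging relation~\eqref{eq:b-reg-2}, established during the proof of Theorem~\ref{thm:b-reg}, yields $\sum_{s=0}^{b-1}S_b(mb+s)=(2b-1)S_b(m)$ for every $m\ge 0$. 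Substituting this gives $A_b(nb)=(2b-1)\sum_{m=0}^{n-1}S_b(m)=(2b-1)A_b(n)$.

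The induction on $\ell$ then closes immediately. For $\ell\ge 1$ I would write $A_b(xb^\ell)=A_b\bigl((xb^{\ell-1})b\bigr)=(2b-1)A_b(xb^{\ell-1})$ by the block identity, and the induction hypothesis $A_b(xb^{\ell-1})=(2x-1)(2b-1)^{\ell-1}$ finishes the computation, producing $(2x-1)(2b-1)^{\ell}$.

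Here there is no genuine obstacle beyond recognizing that~\eqref{eq:b-reg-2}, summed over a single residue block, is exactly the engine that converts a factor $b$ inside $A_b$ into a factor $2b-1$. The only points requiring a little care are the bookkeeping of the block decomposition of $\{0,\ldots,nb-1\}$ and the fact that the base case relies on $x\le b-1$, so that none of the summands $S_b(1),\ldots,S_b(x-1)$ exceeds $2$; both are entirely routine.
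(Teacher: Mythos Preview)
Your argument is correct, and it is genuinely different from---and considerably shorter than---the paper's own proof. The paper establishes the lemma by a double induction on $\ell$ and on $x$: it decomposes $A_b(b^{\ell+1})-A_b(b^{\ell})$ using the three recurrences of Proposition~\ref{prop:rel-s}, obtains the second-order relation $A_b(b^{\ell+1})=(3b-2)A_b(b^{\ell})-(2b^2-3b+1)A_b(b^{\ell-1})$, and verifies that $(2b-1)^\ell$ satisfies it; the case $x\ge 2$ is treated by a further induction on $x$ along the same lines. Your route instead exploits the relation~\eqref{eq:b-reg-2}, already proved in Theorem~\ref{thm:b-reg}, which rearranges to $\sum_{s=0}^{b-1}S_b(mb+s)=(2b-1)S_b(m)$ and hence yields $A_b(nb)=(2b-1)A_b(n)$ in one stroke; the lemma then follows by a trivial single induction on $\ell$.

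What your approach buys is economy and a reordering of the logic: you obtain Corollary~\ref{cor:mult} \emph{before} the lemma rather than after, and without passing through Proposition~\ref{prop:rel-A} or Lemma~\ref{lem:sommatoire_aux_val_part}. The paper's approach, by contrast, is self-contained within Section~\ref{sec:summatory} in that it uses only Proposition~\ref{prop:rel-s} and not the machinery of Section~\ref{sec:breg}; this makes the summatory-function analysis independent of the $b$-regularity proof, at the cost of longer computations. Both are valid; yours is the slicker path once one is willing to invoke~\eqref{eq:b-reg-2}.
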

\begin{proof}
We proceed by induction on $\ell\ge 0$. If $\ell=0$ and $x\in\{1,\ldots,b-1\}$, then using Table~\ref{tab:initialSb}, we have
$$
A_b(x) 
= S_b(0) + \sum_{j=1}^{x-1} S_b(j) 
= 2x-1.
$$ 
If $\ell=1$ and $x\in\{1,\ldots,b-1\}$, then we have
$$
A_b(xb) = A_b(b) + \sum_{y=1}^{x-1} \sum_{j=0}^{b-1} S_b(yb+j).
$$ 
Using Table~\ref{tab:initialSb}, we get $A_b(xb)= (2x-1)(2b-1)$.

Now suppose that $\ell\ge 1$ and assume that the result holds for all $\ell' \le \ell$. 
To prove the result, we again proceed by induction on $x\in\{1,\ldots,b-1\}$. 
When $x=1$, we must show that $A_b(b^{\ell+1})=(2b-1)^{\ell+1}$. 
We have
\[
	A_b(b^{\ell+1})
	= 
	A_b(b^\ell) + \sum_{y=1}^{b-1}\sum_{j=0}^{b^\ell-1} S_b(yb^\ell+j).
\]
By decomposing the sum into three parts accordingly to Proposition~\ref{prop:rel-s}, we get
\begin{eqnarray*}
	A_b(b^{\ell+1})
	&=& 
	A_b(b^\ell) + 	\sum_{y=1}^{b-1} \sum_{j=0}^{b^{\ell-1}-1} S_b(yb^\ell+j)
	+ \sum_{y=1}^{b-1} \sum_{j=0}^{b^{\ell-1}-1} S_b(yb^\ell +yb^{\ell-1}+j)\\
	&+&
	\sum_{y=1}^{b-1} \sum_{\substack{1 \le z \le b-1 \\ z\ne y}} \sum_{j=0}^{b^{\ell-1}-1} S_b(yb^\ell + zb^{\ell-1} +j), 
	\end{eqnarray*}
and, using Proposition~\ref{prop:rel-s},  
\begin{eqnarray}
	A_b(b^{\ell+1})
	&=& A_b(b^{\ell}) \nonumber \\
	&+&
	\sum_{y=1}^{b-1} \sum_{j=0}^{b^{\ell-1}-1} (S_b(yb^{\ell-1}+j)+S_b(j)) \label{eq:terme2} \\
	&+& 
	\sum_{y=1}^{b-1} \sum_{j=0}^{b^{\ell-1}-1} (2S_b(yb^{\ell-1}+j)-S_b(j)) \label{eq:terme3} \\
	&+ &
	\sum_{y=1}^{b-1} \sum_{\substack{1 \le z \le b-1 \\ z\ne y}} \sum_{j=0}^{b^{\ell-1}-1} (S_b(yb^{\ell-1}+j)+2S_b(zb^{\ell-1}+j)-2S_b(j)) \label{eq:terme4}.
\end{eqnarray}
By observing that for all $y$, 
\begin{equation}
\label{eq:calcul1}
	\sum_{j=0}^{b^{\ell-1}-1} S_b(yb^{\ell-1}+j) = A_b((y+1)b^{\ell-1})-A_b(yb^{\ell-1})
\qquad
\text{and}
\qquad	
\sum_{j=0}^{b^{\ell-1}-1} S_b(j)) = A_b(b^{\ell-1}),
\end{equation}
and that
\begin{equation}
\label{eq:calcul2}
	\sum_{y=1}^{b-1} \left(A_b((y+1)b^{\ell-1})-A_b(yb^{\ell-1})\right) = A_b(b^\ell) - A_b(b^{\ell-1}),
\end{equation}
we obtain
\begin{eqnarray*}
\eqref{eq:terme2} &=& A_b(b^{\ell}) + (b-2) A_b(b^{\ell-1}), \\
\eqref{eq:terme3} &=&
2 A_b(b^{\ell}) - (b+1) A_b(b^{\ell-1}),\\
\eqref{eq:terme4} &=& 3(b-2)(A_b(b^{\ell})-A_b(b^{\ell-1})) - 2(b-1)(b-2)A_b(b^{\ell-1})
=
3(b-2)A_b(b^{\ell}) - (b-2)(2b+1)A_b(b^{\ell-1}),
\end{eqnarray*}
and finally
\[
A_b(b^{\ell+1})
=
(3b-2) A_b(b^{\ell})
-
(2 b^2 - 3b + 1) A_b(b^{\ell-1}).
\]
Using the induction hypothesis, we obtain
\[
A_b(b^{\ell+1})
=
(3b-2) (2b-1)^{\ell}
-
(2 b^2 - 3b + 1) (2b-1)^{\ell-1} 
=
(2b-1)^{\ell+1},
\]
which ends the case where $x=1$. 

Now suppose that $x\in\{2,\ldots,b-1\}$ and assume that the result holds for all $x' < x$. 
The proof follows the same lines as in the case $x=1$ with the difference that we decompose the sum into 
\begin{eqnarray*}
	A_b(xb^{\ell+1})
	&=& 
	A_b((x-1)b^{\ell+1}) + \sum_{j=0}^{b^{\ell+1}-1} S_b((x-1)b^{\ell+1}+j)\\
	&=&
	A_b((x-1)b^{\ell+1})
	+
	\sum_{j=0}^{b^{\ell}-1} S_b((x-1)b^{\ell+1} +j)
	+
	\sum_{j=0}^{b^{\ell}-1} S_b((x-1)b^{\ell+1} +(x-1)b^{\ell}+j) \\
	&+&
	\sum_{\substack{1 \le y \le b-1 \\ y\ne x-1}} \sum_{j=0}^{b^{\ell}-1} S_b((x-1)b^{\ell+1} + yb^{\ell} +j).
\end{eqnarray*}
Applying Proposition~\ref{prop:rel-s} and using~\eqref{eq:calcul1} and~\eqref{eq:calcul2} leads to the equality
\[
A_b(xb^{\ell+1}) 
= 
A_b((x-1)b^{\ell+1})
+ (b-1) A_b(x b^{\ell})
- (b-1) A_b((x-1)b^{\ell})
+ 2 A_b(b^{\ell+1})
- 2 (b-1) A_b(b^{\ell}).
\]
The induction hypothesis ends the computation.
\end{proof}

\begin{lemma}\label{lem:sommatoire_aux_val_part}
For all $\ell\ge 1$ and all $x,y\in\{1,\ldots,b-1\}$, we have
$$
A_b(xb^\ell+yb^{\ell-1})= 
\begin{cases}
(4xb-2x+4y-2b) \cdot (2b-1)^{\ell-1}, & \text{if } y \le x; \\
(4xb-2x+4y-2b-1) \cdot (2b-1)^{\ell-1}, & \text{if } y > x.
\end{cases} 
$$
\end{lemma}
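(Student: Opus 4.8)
The plan is to evaluate $A_b(xb^\ell + yb^{\ell-1})$ by reducing it to the values of the summatory function at the ``round'' points $wb^{\ell-1}$, which are furnished by Lemma~\ref{lem:sommatoire_aux_puissances}, and then to process each successive block of $b^{\ell-1}$ terms using the three recurrences of Proposition~\ref{prop:rel-s}. No new induction on $\ell$ is required: the dependence on $\ell$ is entirely carried by the factor $(2b-1)^{\ell-1}$ that comes out of Lemma~\ref{lem:sommatoire_aux_puissances}.

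First I would peel off the leading term and slice the remaining range into $y$ blocks,
$$
A_b(xb^\ell + yb^{\ell-1}) = A_b(xb^\ell) + \sum_{z=0}^{y-1} \sum_{r=0}^{b^{\ell-1}-1} S_b(xb^\ell + zb^{\ell-1} + r),
$$
where $A_b(xb^\ell) = (2x-1)(2b-1)^\ell$ by Lemma~\ref{lem:sommatoire_aux_puissances}. To handle the inner sums I would reuse the block quantities from~\eqref{eq:calcul1}, namely $B_w := \sum_{r=0}^{b^{\ell-1}-1} S_b(wb^{\ell-1}+r) = A_b((w+1)b^{\ell-1}) - A_b(wb^{\ell-1})$ and $C := \sum_{r=0}^{b^{\ell-1}-1} S_b(r)$. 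Invoking Lemma~\ref{lem:sommatoire_aux_puissances} once more (with $A_b(b^\ell)=(2b-1)^\ell$ covering the top block $w=b-1$) gives the clean values $C = (2b-1)^{\ell-1}$ and $B_w = 2(2b-1)^{\ell-1}$ for every $w \in \{1,\ldots,b-1\}$.

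Next, I would feed each block into the correct recurrence from Proposition~\ref{prop:rel-s}. The block $z=0$ is governed by~\eqref{eq:rec_Sb_1} and sums to $B_x + C$; the block $z=x$, when it occurs, is governed by~\eqref{eq:rec_Sb_2} and sums to $2B_x - C$; and every block with $z \notin \{0,x\}$ is governed by~\eqref{eq:rec_Sb_3} and sums to $B_x + 2B_z - 2C$. Substituting the values above, these three block-contributions equal $3(2b-1)^{\ell-1}$, $3(2b-1)^{\ell-1}$ and $4(2b-1)^{\ell-1}$ respectively.

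It remains only to count which blocks appear, and this is exactly where the two cases of the statement arise. If $y \le x$, the index $z$ never reaches $x$, so the sum is one $z=0$ block plus $y-1$ generic blocks, totalling $(4y-1)(2b-1)^{\ell-1}$; if $y > x$, the block $z=x$ does appear, so the sum is one $z=0$ block, one $z=x$ block, and $y-2$ generic blocks, totalling $(4y-2)(2b-1)^{\ell-1}$. Adding $A_b(xb^\ell)$ and expanding $(2x-1)(2b-1) = 4xb-2x-2b+1$ produces the two closed forms in the statement. The one place demanding care is this case split together with the off-by-one tally of generic blocks; the arithmetic itself is routine, and the degenerate case $\ell=1$, in which each ``block'' is a single term, is subsumed by the same formulas.
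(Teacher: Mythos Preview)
Your proof is correct and follows essentially the same approach as the paper: decompose $A_b(xb^\ell+yb^{\ell-1})-A_b(xb^\ell)$ into blocks of length $b^{\ell-1}$, apply the three recurrences of Proposition~\ref{prop:rel-s} to each block, and evaluate using Lemma~\ref{lem:sommatoire_aux_puissances}. The paper phrases this as an induction on $\ell$ and only writes out the case $y=x$, but your observation that no induction on $\ell$ is actually needed is correct, and your uniform treatment of the case split $y\le x$ versus $y>x$ is cleaner than the paper's sketch.
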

\begin{proof}
The proof of this lemma is similar to the proof of Lemma~\ref{lem:sommatoire_aux_puissances} so we only proof the formula for $A_b(xb^\ell+xb^{\ell-1})$, the other being similarly handled. 
We proceed by induction on $\ell\ge 1$. If $\ell=1$, the result follows from Table~\ref{tab:initialSb}.  
Assume that $\ell\ge 2$ and that the formulas hold for all $\ell' <\ell$. 
We have
\begin{align*}
	A_b(xb^{\ell}+xb^{\ell-1})
	&= A_b(xb^\ell) + \sum_{j=0}^{b^{\ell-1}-1} S_b(xb^\ell+j) + \sum_{y=1}^{x-1}\sum_{j=0}^{b^{\ell-1}-1} S_b(xb^\ell+yb^{\ell-1}+j).
\end{align*}
Applying Proposition~\ref{prop:rel-s} and using~\eqref{eq:calcul1} and~\eqref{eq:calcul2} leads to the equality
\[
A_b(xb^{\ell}+xb^{\ell-1})
=
A_b(xb^\ell) + x A_b((x+1)b^{\ell-1}) + (2-x) A_b(xb^{\ell-1}) + (1-2x) A_b(b^{\ell-1}).
\]
Using Lemma~\ref{lem:sommatoire_aux_puissances} completes the computation.
\end{proof}

Lemma~\ref{lem:sommatoire_aux_puissances} and Lemma~\ref{lem:sommatoire_aux_val_part} give rise to recurrence relations satisfied by the summatory function $(A_b(n))_{n\ge 0}$ as stated below. This is a key result that permits us to introduce $(2b-1)$-decompositions (Definition~\ref{def:2b-1decomp} below) of the summatory function $(A_b(n))_{n\ge 0}$ and allows us to easily deduce Theorem~\ref{thm:asymptob}; see~\cite{LRS3} for similar results in base $2$. 

\begin{prop}\label{prop:rel-A}
For all $x,y\in\{1,\ldots,b-1\}$ with $x\ne y$, all $\ell\ge 1$ and all $r\in\{0,\ldots,b^{\ell-1}\}$,
\begin{eqnarray}
\label{eq:rec_Ab_1}
A_b(xb^\ell+r) 
&=& 
(2b-2)\cdot (2x-1)\cdot (2b-1)^{\ell-1} + A_b(xb^{\ell-1}+r) + A_b(r); 
\\
\label{eq:rec_Ab_2}
A_b(xb^\ell+xb^{\ell-1}+r) 
&=& 
(4xb-2x-2b+2) \cdot (2b-1)^{\ell-1} +  2 A_b(xb^{\ell-1}+r) - A_b(r);
\\
\label{eq:rec_Ab_3}
A_b(xb^\ell+yb^{\ell-1}+r) 
&=&
\begin{cases}
(4xb-4x-2b+3) \cdot (2b-1)^{\ell-1} + A_b(xb^{\ell-1}+r) & \\
+ 2 A_b(yb^{\ell-1}+r) - 2 A_b(r), & \text{if } y < x; \\
(4xb-4x-2b+2) \cdot (2b-1)^{\ell-1} + A_b(xb^{\ell-1}+r) & \\
+ 2 A_b(yb^{\ell-1}+r) - 2 A_b(r), & \text{if } y > x.
\end{cases}
\end{eqnarray}
\end{prop}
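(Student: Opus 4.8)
The plan is to prove all three relations by one direct computation, combining the additivity of the summatory function with the three recurrences of Proposition~\ref{prop:rel-s} and the closed forms supplied by Lemma~\ref{lem:sommatoire_aux_puissances} and Lemma~\ref{lem:sommatoire_aux_val_part}. No fresh induction on $\ell$ is required, since the inductive work has already been absorbed into those two lemmas.

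First, for each of the three cases I would isolate the relevant ``base point'' $N \in \{xb^\ell,\ xb^\ell+xb^{\ell-1},\ xb^\ell+yb^{\ell-1}\}$ and write, straight from Definition~\ref{def:S_b},
$$
A_b(N+r) = A_b(N) + \sum_{j=0}^{r-1} S_b(N+j).
$$
Because $r \le b^{\ell-1}$, every index $j$ occurring in the sum lies in $\{0,\ldots,b^{\ell-1}-1\}$, which is exactly the range for which the recurrences \eqref{eq:rec_Sb_1}--\eqref{eq:rec_Sb_3} hold; this is precisely why the present proposition may be stated for $r$ up to $b^{\ell-1}$ inclusive, one value beyond Proposition~\ref{prop:rel-s}. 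Applying the appropriate $S_b$ recurrence term by term rewrites each $S_b(N+j)$ as a fixed combination of $S_b(xb^{\ell-1}+j)$, $S_b(yb^{\ell-1}+j)$ and $S_b(j)$.

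Next I would telescope the resulting partial sums back into values of $A_b$ via the elementary identity $\sum_{j=0}^{r-1} S_b(cb^{\ell-1}+j) = A_b(cb^{\ell-1}+r) - A_b(cb^{\ell-1})$, valid for $c \in \{0,x,y\}$. The terms that still involve $r$ assemble exactly into $A_b(xb^{\ell-1}+r)$, possibly $A_b(yb^{\ell-1}+r)$, and $A_b(r)$, with the coefficients appearing on the right-hand sides of \eqref{eq:rec_Ab_1}--\eqref{eq:rec_Ab_3}. What remains is an $r$-independent term, namely $A_b$ evaluated at the pure arguments $xb^\ell$, $xb^\ell+yb^{\ell-1}$, $xb^{\ell-1}$ and $yb^{\ell-1}$, which I would replace by their explicit values from Lemma~\ref{lem:sommatoire_aux_puissances} and Lemma~\ref{lem:sommatoire_aux_val_part}.

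The only delicate point is the final bookkeeping of these constant terms. For \eqref{eq:rec_Ab_1} one has $A_b(xb^\ell) - A_b(xb^{\ell-1}) = (2x-1)\big((2b-1)-1\big)(2b-1)^{\ell-1} = (2b-2)(2x-1)(2b-1)^{\ell-1}$ directly from Lemma~\ref{lem:sommatoire_aux_puissances}. For \eqref{eq:rec_Ab_2} the constant is $A_b(xb^\ell+xb^{\ell-1}) - 2A_b(xb^{\ell-1})$, which collapses to $(4xb-2x-2b+2)(2b-1)^{\ell-1}$ after inserting the $y=x$ instance of Lemma~\ref{lem:sommatoire_aux_val_part} together with Lemma~\ref{lem:sommatoire_aux_puissances}. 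For \eqref{eq:rec_Ab_3} the constant is $A_b(xb^\ell+yb^{\ell-1}) - A_b(xb^{\ell-1}) - 2A_b(yb^{\ell-1})$, and here the dichotomy between $y<x$ and $y>x$ is inherited verbatim from the two cases of Lemma~\ref{lem:sommatoire_aux_val_part}, yielding $(4xb-4x-2b+3)(2b-1)^{\ell-1}$ and $(4xb-4x-2b+2)(2b-1)^{\ell-1}$ respectively. There is no genuine obstacle here, only arithmetic to check; the sole care needed is to keep the two cases of Lemma~\ref{lem:sommatoire_aux_val_part} straight when simplifying the $r$-independent term.
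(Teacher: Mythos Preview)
Your proposal is correct and follows essentially the same route as the paper: write $A_b(N+r)=A_b(N)+\sum_{j=0}^{r-1}S_b(N+j)$, replace each $S_b(N+j)$ via Proposition~\ref{prop:rel-s}, telescope the sums into differences of $A_b$, and evaluate the $r$-independent constants using Lemmas~\ref{lem:sommatoire_aux_puissances} and~\ref{lem:sommatoire_aux_val_part}. The only cosmetic difference is that the paper treats $r=0$ separately by quoting the lemmas directly, whereas you absorb it into the general computation (the sum being empty); your arithmetic checks on the constant terms are all correct.
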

\begin{proof}
We first prove~\eqref{eq:rec_Ab_1}. Let $x\in\{1,\ldots,b-1\}$, $\ell\ge 1$ and $r\in\{0,\ldots,b^{\ell-1}\}$. 
If $r=0$, then~\eqref{eq:rec_Ab_1} holds using Lemma~\ref{lem:sommatoire_aux_puissances}. 
Now suppose that $r\in\{1,\ldots,b^{\ell-1}\}$. 
Applying successively Proposition~\ref{prop:rel-s} and Lemma~\ref{lem:sommatoire_aux_puissances}, we have
\begin{align*}
	A_b(xb^{\ell}+r)
	&= A_b(xb^\ell) + \sum_{j=0}^{r-1} S_b(xb^{\ell}+j) \\
	&= A_b(xb^\ell) + \sum_{j=0}^{r-1} (S_b(xb^{\ell-1}+j)+S_b(j))\\
	&= A_b(xb^\ell) + (A_b(xb^{\ell-1}+r)-A_b(xb^{\ell-1}))+A_b(r)\\	
	&= (2b-2)(2x-1)(2b-1)^{\ell-1} + A_b(xb^{\ell-1}+r)+A_b(r),
\end{align*}
which proves~\eqref{eq:rec_Ab_1}.

The proof of~\eqref{eq:rec_Ab_2} and~\eqref{eq:rec_Ab_3} are similar, thus we only prove~\eqref{eq:rec_Ab_2}. 
Let $x\in\{1,\ldots,b-1\}$, $\ell\ge 1$ and $r\in\{0,\ldots,b^{\ell-1}\}$. 
If $r=0$, then~\eqref{eq:rec_Ab_2} holds using Lemma~\ref{lem:sommatoire_aux_val_part}. 
Now suppose that $r\in\{1,\ldots,b^{\ell-1}\}$. 
Applying Proposition~\ref{prop:rel-s}, we have
\begin{align*}
	A_b(xb^{\ell}+xb^{\ell-1}+r)
	&= A_b(xb^{\ell}+xb^{\ell-1}) + \sum_{j=0}^{r-1} S_b(xb^{\ell}+xb^{\ell-1}+j) \\
	&= A_b(xb^{\ell}+xb^{\ell-1}) + \sum_{j=0}^{r-1} (2S_b(xb^{\ell-1}+j)-S_b(j))\\
	&= A_b(xb^{\ell}+xb^{\ell-1}) + 2 (A_b(xb^{\ell-1}+r)-A_b(xb^{\ell-1}))-A_b(r).
\end{align*}
Using Lemma~\ref{lem:sommatoire_aux_puissances} and Lemma~\ref{lem:sommatoire_aux_val_part}, we get
\begin{align*}
A_b(xb^{\ell}+xb^{\ell-1}+r)
	&= (4xb+2x-2b) (2b-1)^{\ell-1} - 2(2x-1) (2b-1)^{\ell-1} + 2 A_b(xb^{\ell-1}+r)-A_b(r)\\
	&= (4xb-2x-2b+2) (2b-1)^{\ell-1} + 2 A_b(xb^{\ell-1}+r)-A_b(r),
\end{align*}
which proves~\eqref{eq:rec_Ab_2}.
\end{proof}

The following corollary was conjectured in \cite{LRS3}.

\begin{corollary}\label{cor:mult}
For all $n\ge 0$, we have $A_b(nb)=(2b-1)A_b(n)$.
\end{corollary}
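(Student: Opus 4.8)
The plan is to prove the identity directly from the definition of the summatory function, bypassing the more elaborate recurrences of Proposition~\ref{prop:rel-A}. Recall that $A_b(nb)=\sum_{j=0}^{nb-1}S_b(j)$. First I would partition the index set $\{0,1,\ldots,nb-1\}$ into the $n$ consecutive blocks $\{kb,kb+1,\ldots,kb+b-1\}$ for $k\in\{0,\ldots,n-1\}$; every integer below $nb$ lies in exactly one such block. This reindexing turns the sum into
$$
A_b(nb)=\sum_{k=0}^{n-1}\sum_{t=0}^{b-1}S_b(kb+t),
$$
so the whole statement reduces to the single block identity $\sum_{t=0}^{b-1}S_b(kb+t)=(2b-1)\,S_b(k)$, valid for every $k\ge 0$.

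The key observation is that this block identity is merely a rearrangement of~\eqref{eq:b-reg-2}, established in the proof of Theorem~\ref{thm:b-reg}. Indeed, \eqref{eq:b-reg-2} reads
$$
S_b(kb+b-1)=(2b-1)\,S_b(k)-\sum_{s=0}^{b-2}S_b(kb+s),
$$
and moving the sum to the left-hand side merges the term $s=b-1$ with the others, giving exactly $\sum_{t=0}^{b-1}S_b(kb+t)=(2b-1)\,S_b(k)$. Substituting this back yields
$$
A_b(nb)=\sum_{k=0}^{n-1}(2b-1)\,S_b(k)=(2b-1)\sum_{k=0}^{n-1}S_b(k)=(2b-1)\,A_b(n),
$$
which is the claim. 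The case $n=0$ is handled automatically, since both sides are then empty sums equal to $0$.

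There is essentially no obstacle once~\eqref{eq:b-reg-2} is in hand: the only points needing care are that the block partition of $\{0,\ldots,nb-1\}$ is exact (the blocks are disjoint and cover everything) and that~\eqref{eq:b-reg-2} holds for all $k\ge 0$ (it is proved there by induction, so its universality is exactly what I rely on). An alternative would be to induct on $n$ using Proposition~\ref{prop:rel-A} together with Lemmas~\ref{lem:sommatoire_aux_puissances} and~\ref{lem:sommatoire_aux_val_part}, but I expect this to be markedly more cumbersome: feeding $nb$ into those recurrences multiplies the remainder parameter $r$ by $b$, which can push it outside the range $\{0,\ldots,b^{\ell-1}\}$ for which the relations are stated. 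For this reason I would favour the summatory reindexing above, which sidesteps that difficulty entirely.
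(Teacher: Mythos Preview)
Your argument is correct and takes a genuinely different route from the paper. The paper argues by induction on $n$: for $n\ge b$ it writes $n$ in one of the forms covered by Proposition~\ref{prop:rel-A}, applies the relevant recurrence both to $A_b(nb)$ and to $(2b-1)A_b(n)$, and then cancels matching terms using the induction hypothesis on the smaller arguments that appear. Your proof instead short-circuits all of this by summing the block identity $\sum_{t=0}^{b-1}S_b(kb+t)=(2b-1)S_b(k)$, which is exactly the rearranged form of~\eqref{eq:b-reg-2} already proved inside Theorem~\ref{thm:b-reg}. This is cleaner: no induction, no case split on the shape of $\rep_b(n)$, and no appeal to Lemmas~\ref{lem:sommatoire_aux_puissances} or~\ref{lem:sommatoire_aux_val_part}. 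One small correction to your closing remark: the range worry you raise about the paper's inductive approach does not in fact materialise, because when $n=xb^\ell+r$ with $0\le r<b^{\ell-1}$ one has $nb=xb^{\ell+1}+br$ with $br<b^\ell$, so the remainder stays within the range $\{0,\ldots,b^{\ell}\}$ required by Proposition~\ref{prop:rel-A} at level $\ell+1$ (and similarly for the other two forms). Still, your route is shorter and avoids the case analysis entirely.
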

\begin{proof}
Let us proceed by induction on $n\ge 0$. 
It is easy to check by hand that the result holds  for $n\in\{0,\ldots,b-1\}$. 
Thus consider $n \ge b$ and suppose that the result holds for all $n' < n$. 
The reasoning is divided into three cases according to the form of the base-$b$ expansion of $n$.  
As a first case, we write $n=xb^\ell+r$ with $x\in\{1,\ldots,b-1\}$, $\ell\ge 1$ and $0\le r < b^{\ell-1}$. 
By Proposition~\ref{prop:rel-A}, we have
\begin{align*}
A_b(nb)-(2b-1)A_b(n) &= (2b-2)\cdot (2x-1)\cdot (2b-1)^{\ell} + A_b(xb^{\ell}+br) + A_b(br) - (2b-2)\cdot (2x-1)\cdot (2b-1)^{\ell} \\
& - (2b-1) A_b(xb^{\ell-1}+r) -(2b-1) A_b(r)  
\end{align*}
We conclude this case by using the induction hypothesis. 
The other cases can be handled using the same technique.
\end{proof}

Using Proposition~\ref{prop:rel-A}, we can define $(2b-1)$-decompositions as follows. 

\begin{definition}\label{def:2b-1decomp}
Let $n\ge b$. 
Applying iteratively Proposition~\ref{prop:rel-A} provides a unique decomposition of the form 
$$A_b(n)=\sum_{i=0}^{\ell_b(n)} d_i(n)\, (2b-1)^{\ell_b(n)-i}$$
where $d_i(n)$ are integers, $d_{0}(n)\neq 0$ and $\ell_b(n)$ stands for $\lfloor \log_b n \rfloor - 1$. 
We say that the word 
$$
d_0(n) \cdots d_{\ell_b(n)}(n)
$$ 
is the {\em $(2b-1)$-decomposition} of $A_b(n)$. 
For the sake of clarity, we also write $(d_0(n),\ldots,d_{\ell_b(n)}(n))$. 
Also notice that the notion of $(2b-1)$-decomposition is only valid for integers in the sequence $(A_b(n))_{n\ge 0}$.
\end{definition}

\begin{example}
Let $b=3$. Let us compute the $5$-decomposition of $A_3(150)=1665$. We have $\rep_3(150)=12120$ and $\ell_3(150)=3$. Applying once Proposition~\ref{prop:rel-A} leads to
\begin{equation}\label{eq:exA3(150)}
A_3(150)=A_3(3^4+2\cdot 3^3 + 15) = 4\cdot 5^3 + A_3(3^3+15) + 2 A_3(2\cdot 3^3+15) - 2 A_3(15).
\end{equation}
Applying again Proposition~\ref{prop:rel-A}, we get
\begin{align*}
A_3(3^3+15) &= A_3(3^3+3^2+6) = 6\cdot 3^2 + 2 A_3(3^2+6) - A_3(6), \\
A_3(2\cdot 3^3+15) &= A_3(2\cdot 3^3+3^2+6) = 13\cdot 3^2 + A_3(2\cdot 3^2+6) + 2A_3(3^2+6) - 2A_3(6), \\
A_3(15)&=A_3(3^2+2\cdot 3^1) = 4\cdot 5^1 + A_3(3^1) + 2A_3(2\cdot 3^1) - 2A_3(0).
\end{align*}
Using Proposition~\ref{prop:rel-A}, we find
\begin{align*}
A_3(3^2+6) &= A_3(3^2+2\cdot 3^1) = 4\cdot 5^1 + A_3(3^1) + 2 A_3(2\cdot 3^1) - 2A_3(0), \\
A_3(2\cdot 3^2+6)&= A_3(2\cdot 3^2+2\cdot 3^1) = 16\cdot 5^1 + 2A_3(2\cdot 3^1)-A_3(0), \\
A_3(6) &= A_3(2\cdot 3^1) = 12\cdot 5^0 + A_3(2\cdot 3^0) + A_3(0) = 15\cdot 5^0.
\end{align*}
Using Lemma~\ref{lem:sommatoire_aux_puissances}, we have $A_3(3^1)=5^1$ and $A_3(2\cdot 3^1)= 3\cdot 5^1$. Plugging all those values together in~\eqref{eq:exA3(150)}, we finally have
$$
A_3(150) = 4\cdot 5^3 + 32\cdot 5^2 + 82\cdot 5^1 - 45 \cdot 5^0. 
$$
The $5$-decomposition of $A_3(150)$ is thus $(4,32,82,-45)$.

\end{example}

The proof of the next result follows the same lines as the proof of~\cite[Theorem~1]{LRS3}. 
Therefore we only sketch it. 

\begin{theorem}\label{thm:asymptob}
There exists a continuous and periodic function $\mathcal{H}_b$ of period 1 such that, for all large enough $n$, 
$$A_b(n) =(2b-1)^{\log_b n}\ \mathcal{H}_b(\log_b n).$$ 
\end{theorem}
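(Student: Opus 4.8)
The plan is to realize $\mathcal{H}_b$ as the common value of a suitably normalized version of $(A_b(n))_{n\ge 0}$ along each orbit $\{nb^k : k\ge 0\}$, and then to promote this to a continuous $1$-periodic function by an approximation argument built on the $(2b-1)$-decompositions of Definition~\ref{def:2b-1decomp}. Writing $\alpha:=\log_b(2b-1)\in(1,2)$, note that $(2b-1)^{\log_b n}=n^\alpha$, so the statement is equivalent to saying that $G(n):=A_b(n)/n^\alpha$ coincides with a continuous $1$-periodic function evaluated at $\log_b n$. First I would dispose of periodicity. By Corollary~\ref{cor:mult} we have $A_b(nb)=(2b-1)A_b(n)$, hence $G(nb)=A_b(nb)/(nb)^\alpha=(2b-1)A_b(n)/(b^\alpha n^\alpha)=G(n)$, since $b^\alpha=2b-1$. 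Thus $G$ is constant on each orbit $\{nb^k\}$. Because $\{\log_b n_1\}=\{\log_b n_2\}$ forces $n_1=n_2 b^k$ for some $k\in\mathbb{Z}$, the assignment $\mathcal{H}_b(\{\log_b n\}):=G(n)$ is a well-defined single-valued function on the dense set $D:=\{\{\log_b n\}: n\ge 1\}\subseteq[0,1)$, and it already satisfies the required identity \emph{exactly} for every $n\ge 1$. It then remains only to show that $G|_{D}$ is uniformly continuous, so that it extends to a continuous $1$-periodic function on $\mathbb{R}$.

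The heart of the argument is therefore a quantitative comparison of $A_b(n)$ and $A_b(m)$ when $\log_b n$ and $\log_b m$ are close modulo $1$. After multiplying by a power of $b$ (which changes neither $G$-value) we may assume $n,m\in[b^L,b^{L+1})$ for a common large $L$ and that their base-$b$ expansions share a long common prefix. I would then run the $(2b-1)$-decomposition of Definition~\ref{def:2b-1decomp}, which applies Proposition~\ref{prop:rel-A} repeatedly, peeling the leading digits one block at a time and producing at step $i$ the coefficient $d_i$ of $(2b-1)^{\ell_b(n)-i}$. The two facts to establish are: (a) \emph{stabilization}, namely that $d_i(n)$ depends only on the $O(i)$ leading digits of $n$, so a shared prefix forces $d_i(n)=d_i(m)$ for all small $i$; and (b) a \emph{geometric tail estimate} of the form $\sum_{i\ge K}d_i(n)\,(2b-1)^{-i}=O(\rho^{K})$ with $\rho<1$, uniformly in $n$. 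Granting (a) and (b), $|G(n)-G(m)|$ is bounded by the tails beyond the common prefix length, which tend to $0$ uniformly, giving the uniform continuity needed; letting the prefix length grow yields the continuous extension $\mathcal{H}_b$. The fact that the extension is a genuine \emph{function} (rather than only a step function) follows because the jumps of the rescaled maps $t\mapsto(2b-1)^{-L}A_b(\lfloor tb^L\rfloor)$ are $(2b-1)^{-L}S_b(\lfloor tb^L\rfloor)$, and $S_b(n)=o(n^\alpha)$: indeed the summatory function $A_b$ grows like $n^\alpha$, so the $b$-regular sequence $(S_b(n))$ has growth exponent $\alpha-1<1$.

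The main obstacle is estimate (b). The coefficients $d_i(n)$ are \emph{not} bounded — already in the base-$3$ example above the $5$-decomposition is $(4,32,82,-45)$, whose entries grow — so no bound on $|d_i|$ can be used directly, and the branching in~\eqref{eq:rec_Ab_3} (three terms whose coefficients sum in absolute value to $5$) could a priori produce growth like $5^{K}$ after $K$ steps. The required decay must instead come from \emph{cancellation}: the three arguments $A_b(xb^{\ell-1}+r)$, $A_b(yb^{\ell-1}+r)$ and $A_b(r)$ created at each step share the common tail $r$, so it is their correlated differences — which are controlled via Lemma~\ref{lem:sommatoire_aux_puissances} and Proposition~\ref{prop:rel-A} themselves — rather than the individual terms that decay geometrically. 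Making this cancellation explicit, exactly as in the base-$2$ analysis of~\cite[Theorem~1]{LRS3}, is the technical core; the remaining bookkeeping passing from the uniform tail bound to the continuous periodic $\mathcal{H}_b$ is routine.
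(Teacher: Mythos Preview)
Your proposal is correct and follows essentially the same route as the paper's own (sketched) proof: both hinge on Corollary~\ref{cor:mult} to obtain the period-$1$ invariance of $G(n)=A_b(n)/(2b-1)^{\log_b n}$, and both reduce the continuity of $\mathcal{H}_b$ to a uniform estimate on the $(2b-1)$-decompositions coming from Proposition~\ref{prop:rel-A}, deferring that estimate to the base-$2$ analysis of~\cite{LRS3}. The only cosmetic difference is packaging: the paper builds an auxiliary function $\Phi_b$ on $[0,1]$ as a uniform limit of step functions $\phi_n$ indexed by $r/b^k$ and then sets $\mathcal{H}_b(x)=\Phi_b(b^{\{x\}}-1)$, whereas you define $\mathcal{H}_b$ directly on the dense set $\{\{\log_b n\}\}$ and extend by uniform continuity --- these are the same construction in different coordinates.
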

As an example, when $b=3$, the function $\mathcal{H}_3$ is depicted in Figure~\ref{fig:H3} over one period. 
\begin{figure}[h!tb]
    \centering
\includegraphics[scale=0.3]{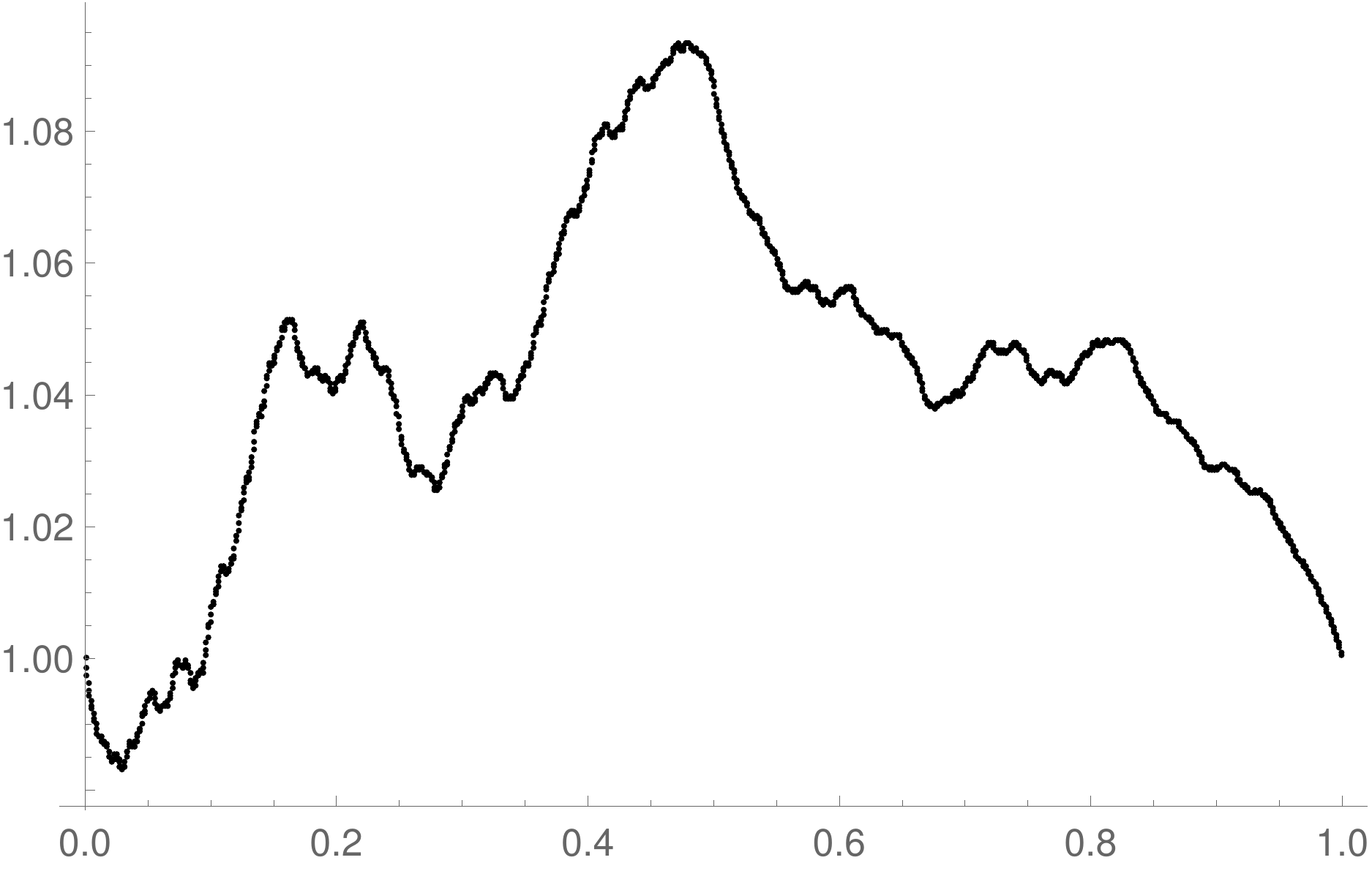}
    \caption{The function $\mathcal{H}_3$ over one period.}
    \label{fig:H3}
\end{figure}

\begin{proof}[Sketch of the proof of Theorem~\ref{thm:asymptob}]
Let us start by defining the function $\mathcal{H}_b$. Given any integer $n\ge 1$, we let $\phi_n$ denote the function 
$$
\phi_n(\alpha) = \frac{A_b(e_n(\alpha))}{(2b-1)^{\log_b(e_n(\alpha))}}, \quad \alpha\in[0,1)
$$
where $e_n(\alpha)=b^{n+1} + b \lfloor b^n \alpha \rfloor + 1$. With a proof analogous to the one of~\cite[Proposition~20]{LRS3}, the sequence of functions $(\phi_n)_{n\ge 1}$ uniformly converges to a function $\Phi_b$. As in~\cite[Theorem~5]{LRS3}, this function is continuous on $[0,1]$ and such that $\Phi_b(0)=\Phi_b(1)=1$. Furthermore, it satisfies
$$
A_b(b^k+r) = (2b-1)^{\log_b(b^k+r)} \Phi_b\left(\frac{r}{b^k}\right) \quad k\ge 1, 0\le r< b^k;
$$
see~\cite[Lemma~24]{LRS3}. Using Corollary~\ref{cor:mult}, we get that, for all $n=b^j(b^k+r)$, $j,k\ge 0$ and $r\in\{0,\ldots,b^k-1\}$, 
$$
A_b(n) = (2b-1)^{\log_b(n)}\Phi_b\left(\frac{r}{b^k}\right).
$$
The function $\mathcal{H}_b$ is defined by $\mathcal{H}_b(x)=\Phi_b( b^{\{x\}}-1 )$ for all real $x$ ($\{\cdot \}$ stands for the fractional part).
\end{proof}

\bigskip
\hrule
\bigskip

\noindent 2010 {\it Mathematics Subject Classification}: 11A63, 11B65, 11B85, 41A60, 68R15.

\noindent \emph{Keywords:}
Binomial coefficients, subwords, generalized Pascal triangles, base-$b$ expansions, regular sequences, summatory function, asymptotic behavior

\bigskip
\hrule
\bigskip

\noindent (Concerned with sequences
\seqnum{A007306},
\seqnum{A282714},
\seqnum{A282715},
\seqnum{A282720},
\seqnum{A282728},
\seqnum{A284441},
and \seqnum{A284442}.)

\bigskip
\hrule
\bigskip

\end{document}